\theoremstyle{plain}
\newtheorem{sublemma}{}[theorem]
\newcommand{\cD}{{\mathcal D}}
\newcommand{\cF}{{\mathcal F}}
\newcommand{\cL}{{\mathcal L}}
\newcommand{\cM}{{\mathcal M}}
\newcommand{\cN}{{\mathcal N}}
\newcommand{\cR}{{\mathcal R}}
\newcommand{\cT}{{\mathcal T}} 
\newcommand{\rSPR}{\mathrm{rSPR}}
\newcommand{\SNPR}{\mathrm{SNPR}}
\newcommand{\tc}{{\rm tc}}
\newcommand{\AD}{\mathrm{AD}}
\title{Bounding the SNPR distance between two tree-child networks using generalised agreement forests}
\author{Steven Kelk\authornote{1}
\and
Simone Linz\authornote{2}
\and
Charles Semple\authornote{3}
}
\begin{document}

\maketitle

\begin{abstract}
Agreement forests  continue to play a central role in the comparison of phylogenetic trees since their introduction more than 25 years ago. More specifically, they are used to characterise several distances that are based on tree rearrangement operations and related quantifiers of dissimilarity between phylogenetic trees.\ In addition, the concept of agreement forests continues to underlie most advancements in the development of algorithms that exactly compute the aforementioned measures. In this paper, we introduce agreement digraphs, a concept that generalises agreement forests for two phylogenetic trees to two phylogenetic networks. Analogous to the way in which agreement forests compute the subtree prune and regraft distance between two phylogenetic trees but inherently more complex, we then use agreement digraphs to bound the subnet prune and regraft distance between two tree-child networks from above and below and show that our bounds are tight.
\end{abstract}

\section{Introduction}

Phylogenetic trees and networks play an important role in areas of biology that investigate the relationships between biological entities such as species, viruses, and cells. A central task in the analysis of phylogenetic trees and networks is the quantification of the dissimilarity between them. Distances between phylogenetic trees that provide a measure of dissimilarity can be broadly classified into distances that are based on tree vector representations and those based on tree rearrangement operations~\cite{stjohn17}. While many of the former distances can be computed in polynomial time, the latter distances are typically NP-hard to compute. On the positive side and as summarised in Semple and Steel~\cite{semple03}, tree distances that are based on rearrangement operations provide a  framework to explore and traverse spaces of phylogenetic trees since any tree in a given space can be transformed into any other tree of the same space by a finite number of operations such as nearest neighbour interchange (NNI), subtree prune and regraft (SPR), rooted subtree prune and regraft (rSPR), and tree bisection and reconnection (TBR). In particular, the length of a shortest path between two phylogenetic trees in a given space of trees equals the distance between the two trees under the rearrangement operation that underlies the space.

Computing distances that are based on tree rearrangement operations and related dissimilarity measures such as the minimum hybridisation number for two phylogenetic trees~\cite{baroni05} remains an active area of research (e.g.~\cite{kelk24,olver22,shi18}) despite the NP-hardness of the associated optimisation problems. Indeed, recent algorithmic progress facilitates computations that exactly calculate the aforementioned measures for data sets of remarkable size~\cite{vaniersel22,vanwersch22}. Notably, the concept of agreement forests, which was first introduced by Hein et al.~\cite{hein96}, underpins almost all mathematical and algorithmic advances in this area of research. Intuitively, an agreement forest of two binary phylogenetic trees $\cT$ and $\cT'$ is a decomposition of $\cT$ and $\cT'$ into smaller and non-overlapping subtrees that have the same topology in $\cT$ and $\cT'$. Since the introduction of agreement forests almost 30 years ago, different variants of agreement forests have been used to characterise the rSPR distance between two rooted binary phylogenetic trees, the TBR distance between two unrooted binary phylogenetic trees, and the minimum hybridisation number of two rooted binary phylogenetic trees, as well as to establish related NP-hardness results~\cite{allen01,baroni05,bordewich05}. Subsequent work has focussed on generalising agreement forests to collections of phylogenetic trees of arbitrary size that are not necessarily binary and on the  development of fixed-parameter tractable and approximation algorithms~\cite{chen15,kelk20,linz09,olver22,whidden13}. Additional developments in the context of agreement forests include a generalisation of agreement forests to relaxed agreement forests~\cite{relaxed} and the exploitation of agreement forests to establish extremal results on the SPR, rSPR, and TBR distances~\cite{atkins19,ding11}.  Part of the success of agreement forests is due to the fact that they replace the computation of a measure of dissimilarity between two phylogenetic trees $\cT$ and $\cT'$ with the more static computation of an agreement forest such that the sought-after measure equates to the size of an optimal agreement forest for $\cT$ and $\cT'$.\ Moreover, agreement forests  enable rigorous mathematical arguments that operate only on $\cT$ and $\cT'$ without knowing the topology of any intermediate tree that lies on a shortest path between $\cT$ and $\cT'$ in an associated space of trees.

Since phylogenetic trees are somewhat limited in the type of biological processes that they can represent, rooted phylogenetic networks are increasingly being adopted to represent evolutionary relationships between biological entities whose past does not only include divergence events such as speciation but also convergence events such as lateral gene transfer or hybridisation.\ Inspired by tree rearrangement operations, several network rearrangement operations have recently been developed. For example, the subnet prune and regraft (SNPR) operation generalises rSPR to two rooted binary phylogenetic networks~\cite{bordewich17}. An SNPR operation either adds or deletes a reticulation edge (i.e.,~an edge that is directed into a vertex of in-degree two), or prunes and regrafts a subnetwork in the spirit of rSPR. Since the introduction of SNPR, the operation has been implemented in the Python package PhyloX~\cite{janssen24}, used in studies that, for example, reconstruct reassortment networks and involve a search through the space of rooted phylogenetic networks~\cite{markin22,mueller22,mueller20}, and analysed mathematically with regards to the neighbourhood size~\cite{klawitter18} and properties of shortest length SNPR paths between two rooted phylogenetic networks~\cite{klawitter19b}. Other network rearrangement operations for rooted phylogenetic networks~\cite{erdos21,gambette17,janssen21,janssen18}, unrooted phylogenetic networks~\cite{francis17,huber15,huber16,janssen19}, and semi-directed networks~\cite{phd,linz23} have also been developed and analysed.\ Like SNPR, all such operations generalise a  rearrangement operation for phylogenetic trees to phylogenetic networks.\ For excellent summaries of rearrangement operations for phylogenetic networks, we refer the reader to two PhD theses~\cite{janssen-phd,klawitter-phd}.\ Although a slightly modified framework of agreement forests can be used to compute the SNPR distance between a rooted binary phylogenetic tree and a rooted binary phylogenetic network~\cite{klawitter19b}, the question of how to compute the distances  that result from network rearrangement operations  between two arbitrary binary phylogenetic networks remains open.

In this paper, we generalise agreement forests for two rooted binary phylogenetic trees to agreement digraphs for two rooted binary phylogenetic networks $\cN$ and $\cN'$ that capture the commonalities between them. Focussing on the class of tree-child networks~\cite{cardona09} and using this novel framework of agreement digraphs and their extensions (formal definitions are given Section~\ref{sec:prelim}), we  bound the SNPR distance $d_\tc(\cN,\cN')$ between two tree-child networks $\cN$ and $\cN'$ from above and below, where not only $\cN$ and $\cN'$ are tree-child but also each intermediate network  in an associated sequence. Both bounds are tight and within small constant factors of the minimum number $m_\tc(\cN,\cN')$ of edges in $\cN$ and $\cN'$ that are not contained in an embedding of the agreement digraph and an extension,
where the minimum is taken over all agreement digraphs for $\cN$ and $\cN'$ and their extensions. More specifically, the main result of this paper is the following theorem.
\begin{theorem}\label{t:main}
Let $\cN$ and $\cN'$ be two binary tree-child networks on $X$. Then $$\frac 1 2  m_{\tc}(\cN,\cN’)\leq d_{\tc}(\cN,\cN’) \leq m_{\tc}(\cN,\cN’).$$  
\end{theorem}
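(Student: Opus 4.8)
The plan is to prove the two inequalities separately, in each case adapting the classical correspondence between the rSPR distance and maximum agreement forests of two phylogenetic trees (see~\cite{bordewich05,allen01}) to the present setting, while contending with two features that are new to networks: reticulation edges, which a single SNPR operation may add or delete, and the tree-child property, which every network in the sequences we build must satisfy.

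\emph{Upper bound.} To establish $d_{\tc}(\cN,\cN')\le m_{\tc}(\cN,\cN')$, fix an agreement digraph $\cD$ for $\cN$ and $\cN'$ together with an extension attaining the minimum $m:=m_{\tc}(\cN,\cN')$, and induct on $m$. If $m=0$, the embedding of $\cD$ and its extension uses every edge of $\cN$ and every edge of $\cN'$, which forces $\cN=\cN'$ and hence $d_{\tc}(\cN,\cN')=0$. If $m>0$, I would locate a single SNPR operation, applied to $\cN$ and producing a tree-child network $\cN_1$, for which there is an agreement digraph and extension for $\cN_1$ and $\cN'$ of value at most $m-1$; the induction hypothesis applied to $\cN_1$ and $\cN'$ then supplies a tree-child SNPR sequence of length at most $m-1$, and prepending the one operation gives the bound. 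The operation is found by choosing a \emph{lowest} component of the decomposition of $\cN$ induced by $\cD$ that is not yet placed as prescribed by $\cN'$ and $\cD$, and examining the edge of $\cN$ immediately above it, which is not in the embedding. Depending on whether the discrepancy at this component is caused by a misplaced subnetwork or by a reticulation edge present in exactly one of $\cN$, $\cN'$, the required move is, respectively, a prune-and-regraft that relocates the subnetwork to the position dictated by $\cD$, or an SNPR operation that deletes (respectively adds) the offending reticulation edge; in each case one checks that the result stays tree-child and that the edge in question is absorbed into the embedding, so that $m$ strictly decreases.

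\emph{Lower bound.} To establish $\tfrac12 m_{\tc}(\cN,\cN')\le d_{\tc}(\cN,\cN')$, let $\cN=\cN_0,\cN_1,\dots,\cN_k=\cN'$ be a shortest SNPR sequence of tree-child networks, so $k=d_{\tc}(\cN,\cN')$. Since the trivial agreement digraph whose embedding is all of $\cN_0$ gives $m_{\tc}(\cN_0,\cN_0)=0$, it suffices to prove the stability estimate $m_{\tc}(\cN_0,\cN_{i+1})\le m_{\tc}(\cN_0,\cN_i)+2$ for every $i$, since telescoping then yields $m_{\tc}(\cN,\cN')=m_{\tc}(\cN_0,\cN_k)\le 2k$. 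For the estimate I would start from an agreement digraph $\cD$ and extension for $\cN_0$ and $\cN_i$ realising $m_{\tc}(\cN_0,\cN_i)$ and adjust only the $\cN_i$-side of the embedding: the SNPR operation taking $\cN_i$ to $\cN_{i+1}$ changes $\cN_i$ in a bounded, localised way, so all but a constant number of the edges used by the embedding of $\cD$ in $\cN_i$ persist in $\cN_{i+1}$ and can be re-used, the few edges affected by the move are declared uncovered, and the components of $\cD$ incident with them are, where necessary, broken into smaller components. One then checks that the result is again a valid agreement digraph with an extension for $\cN_0$ and $\cN_{i+1}$, and that the net increase in the number of uncovered edges, counted over $\cN_0$ and $\cN_{i+1}$ together, is at most two.

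\emph{Main obstacle.} The base cases and the bookkeeping of suppressed degree-two vertices are routine. The crux of the lower bound is controlling the cascade that can be triggered when re-using the old embedding after an SNPR operation would create a directed cycle in the agreement digraph or violate the tree-child condition on one of its components, thereby forcing further edge removals; showing that these knock-on effects never exceed the budget of two is the heart of the argument, and the tightness of the constant $\tfrac12$ reflects that this budget is sometimes fully used. For the upper bound, the parallel difficulty is guaranteeing that the SNPR move chosen in the inductive step can always be performed within the class of tree-child networks and \emph{strictly} reduces $m$; the configurations in which the naive choice of lowest component would leave the tree-child class, or would fail to reduce $m$, are handled by a more careful selection rule, and this is where the tree-child hypothesis on $\cN$ and $\cN'$ is used most heavily.
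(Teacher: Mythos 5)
Your overall architecture matches the paper's: the upper bound by induction on the sum of the two cut sizes of an optimal agreement digraph, performing one SNPR move at a time guided by a lowest ``uncovered'' piece of $\cN$; the lower bound by walking along an optimal SNPR sequence and proving a per-step stability estimate for $m_\tc$. Both halves, however, contain a quantitative gap traceable to the same oversight: $d_\tc$ is a \emph{weighted} distance in which an $\SNPR^\pm$ costs $2$ while $\SNPR^+$ and $\SNPR^-$ cost $1$, and your accounting treats every operation as costing $1$. For the upper bound, ``a digraph of value at most $m-1$ plus one prepended operation'' only yields $d_\tc\le m$ when the operation has weight $1$; when the chosen move is a prune-and-regraft you need the sum $c_\cD+c'_\cD$ to drop by $2$. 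This does happen --- the regraft absorbs one uncovered edge of $\cN$ \emph{and} one uncovered edge of $\cN'$ simultaneously, because the two components of $\cD$ involved are merged into a single component $D'_{ij}$ --- but establishing that (and that the intermediate network stays tree-child) is precisely the content of the paper's Lemma~\ref{l:dtc<mtc}, and your sketch neither states nor uses it.

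For the lower bound the claimed estimate $m_\tc(\cN_0,\cN_{i+1})\le m_\tc(\cN_0,\cN_i)+2$ is false: in the paper's tightness example (Proposition~\ref{prop:sharp}) one has $m_\tc(\cN_\ell,\cN'_\ell)=4\ell-2$ while $\cN_\ell$ and $\cN'_\ell$ are connected by $\ell$ prune-and-regraft moves, so telescoping $+2$ per move would give $m_\tc\le 2\ell$, a contradiction. The correct per-step bound is $+2w$ where $w$ is the weight of the move, i.e.\ up to $+4$ for an $\SNPR^\pm$, and --- contrary to your plan of adjusting ``only the $\cN_i$-side of the embedding'' --- this increase is necessarily split over \emph{both} sides: when the deleted edge lies in the image of an edge $e_{\cD'}$ of the agreement digraph, that digraph edge must be removed (and its component possibly split to restore the degree and tree-child constraints), which uncovers the entire path representing $e_{\cD'}$ in the embedding in $\cN_0$ as well. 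Bounding the damage by $2$ on \emph{each} side is the bulk of the paper's proof of Lemma~\ref{l:mtc<dtc} (the case analysis (D1)--(D5) together with the two sublemmas on cut sizes). You also need to normalise the sequence so that the step being peeled off is not an $\SNPR^-$ whose deleted reticulation edge sits inside the embedding; the paper does this via Lemma~\ref{l:swap} and Corollary~\ref{c:nice-seq}, and your sketch does not address that case.
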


The problem of finding an extension of an agreement digraph that optimally captures the commonalities between two rooted binary phylogenetic networks $\cN$ and $\cN'$ may, at first sight, appear to be related to the problem of finding a maximum agreement subnetwork that $\cN$ and $\cN'$ have in common~\cite{choy05,jansson04,valiente-book}. However, upon further inspection, it becomes clear that the two problems are fundamentally different since a maximum agreement subnetwork for $\cN$ and $\cN'$ is not necessarily a component of an optimal agreement digraph for $\cN$ and $\cN'$.\ On the other hand, our work is related to that of Klawitter~\cite{klawitter19,klawitter20}, who has developed an alternative generalisation of agreement forests for two rooted binary phylogenetic networks and for two unrooted binary phylogenetic networks. His generalisation for rooted networks gives rise to collections of agreement subgraphs that, in comparison with our generalisation, may have unlabelled leaves of in-degree one or two and that consequently do not resemble phylogenetic networks. In the same  paper, Klawitter established bounds on the SNPR distance $d^*_{\SNPR}(\cN,\cN')$ between two rooted binary phylogenetic networks $\cN$ and $\cN'$ in terms of collections of agreement subgraphs whose number of unlabelled vertices of degree one is minimised. Without going into detail, this minimum number is referred to as $d_\AD(\cN,\cN')$. In particular, Klawitter established the following theorem.
\begin{theorem}\label{t:klawitter}\cite[Corollary 5.5]{klawitter19}
Let $\cN$ and $\cN'$ be two rooted binary phylogenetic networks on $X$. Then $$d_{\AD}(\cN,\cN’)\leq d^*_{\SNPR}(\cN,\cN’) \leq 6d_{\AD}(\cN,\cN').$$  
\end{theorem}

\noindent While Theorem~\ref{t:klawitter} applies to all rooted binary phylogenetic networks, it remains unknown whether or not the bounds are tight. Comparing Theorems~\ref{t:main} and~\ref{t:klawitter}, we note that  $d^*_\SNPR(\cN,\cN')$ in Theorem~\ref{t:klawitter} refers to the minimum number of SNPR operations that are necessary to transform $\cN$ into $\cN'$, while the quantity $d_\tc(\cN,\cN')$ in Theorem~\ref{t:main} does not only count SNPR operations but, additionally, weights them. More precisely, each SNPR operation that adds or deletes a reticulation edge is weighted  one and each SNPR operation that prunes and regrafts a subnetwork is weighted  two. Hence, $d_\tc(\cN,\cN')$ equates to the minimum sum of weights of SNPR operations that are needed to transform $\cN$ into $\cN'$. We discuss this further in the last section of the paper. Lastly, we note that although Klawitter's generalisation and our definition of an agreement digraph differ, both definitions generalise agreement forests in the sense that, when applied to two rooted binary phylogenetic trees, they can be used to exactly compute their rSPR distance (\cite[Proposition 4.2]{klawitter19} and Proposition~\ref{prop:rspr} of the present paper).

The paper is organised as follows. In Section~\ref{sec:prelim}, we present basic notation and terminology for rooted phylogenetic networks. This is followed by an introduction of  the new concepts of phylogenetic digraphs, which is the main definition building up to agreement digraphs,
and their extensions in Section~\ref{sec:new-def}.  Section~\ref{sec:prop} establishes several basic properties of extensions. Subsequently, in Section~\ref{sec:measures}, we introduce the tree-child SNPR distance between two tree-child  networks $\cN$ and~$\cN'$ and a maximum agreement tree-child digraph for $\cN$ and $\cN$. In Section~\ref{sec:bound}, we establish Theorem~\ref{t:main} and show that our bounds are tight before we finish with some concluding remarks in Section~\ref{sec:conclusion}.

\section{Preliminaries}\label{sec:prelim}

This section provides notation and terminology that is used in the remainder of this paper. Throughout the paper, $X$ denotes a non-empty finite set. It is also worth noting that, except for rooted phylogenetic networks, the graphs that we consider in this paper are not necessarily connected. We start by introducing a broad class of directed acyclic graphs and several definitions that apply to this class. Subsequent definitions in this and the next section consider subclasses of these directed acyclic graphs.\\

\noindent{\bf Directed acyclic graphs.} Let $D$ be a directed acyclic graph. We allow parallel edges in $D$ and note that $D$ may have several vertices with in-degree zero. Furthermore, the undirected graph that underlies $D$ may contain more than one connected component. Let $V_D$ denote the vertex set of $D$, and let $E_D$ denote the edge set of $D$. We say that  a vertex $v$ in $V_D$ is a {\it tree vertex} if $v$ has in-degree one and out-degree one or two, and that $v$ is a {\it reticulation} if $v$ has in-degree two and out-degree one. Furthermore, an edge $(u, v)$ in $D$ is a {\em reticulation edge} if $v$ is a reticulation and, otherwise, $(u, v)$ is a {\em tree edge}.  Lastly, for two vertices $u$ and $v$ in $D$, we say that $u$ is a {\it parent} of $v$ and $v$ is a {\it child} of $u$ precisely if there is an edge $(u,v)$ in $D$. \\

\noindent{\bf Phylogenetic networks.} Rooted phylogenetic networks generalise rooted phylogenetic trees to digraphs with underlying (but no directed) cycles. They allow vertices with in-degree greater than one, which represent non-treelike events such as hybridisation, lateral gene transfer, or recombination. Formally, a {\it rooted binary phylogenetic network on $X$} is a connected directed acyclic graph with a single vertex of in-degree zero that satisfies the following properties:
\begin{enumerate}[(i)]
\item the unique root $\rho$ has in-degree zero and  out-degree one,
\item vertices with out-degree zero have in-degree one, and the set of vertices with out-degree zero is $X$, and
\item all other vertices have either in-degree one and out-degree two, or in-degree two and out-degree one.
\end{enumerate}
The vertices of $\cN$ of out-degree zero are called {\it leaves}, and so $X$ is referred to as the {\it leaf set} $\cL(\cN)$ of $\cN$. In keeping with the literature on distances between two phylogenetic networks, we allow parallel edges in rooted binary phylogenetic networks. Since all phylogenetic networks in this paper are rooted and binary, we simply refer to a rooted binary phylogenetic network on $X$ as a {\it phylogenetic network on $X$}. Now, let $\cN$ be a phylogenetic network on $X$. The vertices of out-degree zero, that is the elements in $X$, are called {\em leaves} and $X$ is referred to as the {\em leaf set} of $\cN$. If a phylogenetic network $\cN$ has no reticulations, we call $\cN$ a {\it phylogenetic $X$-tree}. Moreover, if $\cN$ is a phylogenetic $X$-tree and $X$ contains exactly three elements, say $X=\{a,b,c\}$, then we refer to $\cN$ as a {\it triple} if the underlying path joining the root of $\cN$ and $c$ is vertex-disjoint from that joining $a$ and $b$ in which case, $(a, b, c)$ or, equivalently, $(b, a, c)$ denotes this triple.

Now, let $\cN$ and $\cN'$ be two phylogenetic networks on $X$ with vertex and edge sets $V$ and $E$, and $V'$ and $E'$, respectively. We say that $\cN$ is {\it isomorphic} to $\cN'$ if there is a bijection $\varphi: V\rightarrow V'$ such that $\varphi(x)=x$ for all $x\in X$, and $(u, v)\in E$ if and only if $(\varphi(u), \varphi(v))\in E'$ for all $u, v\in V$. If $\cN$ and $\cN'$ are isomorphic, then we write $\cN\cong\cN'$.\\

\noindent{\bf Tree-child networks.} Let $\cN$ be a phylogenetic network on $X$.\ We say that~$\cN$ is {\it tree-child} if each non-leaf vertex has a child that is a tree vertex or a leaf. Moreover, we say that $\cN$ contains a {\it stack} if there exist two reticulations that are joined by an edge and that $\cN$ contains a pair of {\it sibling reticulations} if there exist two reticulations that have a common parent. A phylogenetic network that is not tree-child is shown in Figure~\ref{fig:tree-child}. In studying the mathematics that underlies phylogenetic networks, tree-child networks have been proven to be particularly successful because of their combinatorial properties that are often exploited to gain traction in establishing mathematical results.\ At the same time, these properties are not overly restrictive from a structural perspective in comparison to level-$1$ networks for example whose underlying cycles are pairwise vertex disjoint. For an overview of classes of phylogenetic networks, we refer the reader to Kong et al.~\cite{kong22}. 

\begin{figure}[t]
    \centering
\scalebox{0.87}{\input{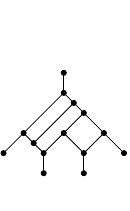_t}}
    \caption{A phylogenetic network on $X=\{1,2,3,4\}$ that is not tree-child because $u$ and $v$ form a stack, and $v$ and $v'$ are a pair of sibling reticulations.}
    \label{fig:tree-child}
\end{figure}

The following well-known equivalence  follows from the definition of a tree-child network and will be freely used throughout the remainder of the paper.

\begin{lemma}
Let $\cN$ be a  phylogenetic network. Then $\cN$ is tree-child if and only if it has no stack, no pair of sibling reticulations, and no pair of parallel edges.
\end{lemma}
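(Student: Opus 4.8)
\emph{Strategy.} The statement is an equivalence, so I would prove the two implications separately, and in fact the forward implication is cleanest via its contrapositive. The only structural facts needed are the degree constraints of property~(iii) (in a phylogenetic network every non-root, non-leaf vertex is either a tree vertex with in-degree one and out-degree two, or a reticulation with in-degree two and out-degree one), the fact that the root has out-degree one, and acyclicity together with $\rho$ being the unique source.

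\emph{Forward direction (contrapositive).} Suppose $\cN$ contains a stack, or a pair of sibling reticulations, or a pair of parallel edges; in each case I would exhibit a non-leaf vertex all of whose children are reticulations, so that $\cN$ fails to be tree-child. If $u$ and $v$ are reticulations joined by an edge $(u,v)$, then $u$ is a non-leaf vertex whose unique child (by out-degree one) is the reticulation $v$. If $v$ and $v'$ are distinct reticulations with a common parent $w$, then $w$ has out-degree at least two, so by property~(iii) it is a tree vertex whose two children $v,v'$ are both reticulations. If there are parallel edges $(u,v)$, then $v$ has in-degree at least two and is therefore a reticulation, while $u$ has out-degree at least two and is therefore a tree vertex both of whose children equal $v$. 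In all three cases $\cN$ is not tree-child.

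\emph{Reverse direction.} Assume $\cN$ has no stack, no pair of sibling reticulations, and no pair of parallel edges, and let $v$ be a non-leaf vertex; I would show $v$ has a child that is a tree vertex or a leaf, splitting on the type of $v$. If $v$ is a reticulation, its unique child $c$ cannot be a reticulation, since then $(v,c)$ would be a stack, so $c$ is a tree vertex or leaf. If $v$ is a tree vertex, its two children $c_1,c_2$ cannot both be reticulations, since $c_1=c_2$ would give parallel edges and $c_1\neq c_2$ would give sibling reticulations; hence some $c_i$ is a tree vertex or leaf. If $v$ is the root $\rho$, with unique child $c$, then $c$ cannot be a reticulation: otherwise $c$ would have a second parent $p\neq\rho$, and since $\cN$ is connected with unique source $\rho$ whose only out-edge is $(\rho,c)$, every directed path from $\rho$ to $p$ runs through $c$, yielding a directed $c$-to-$p$ path that together with the edge $(p,c)$ is a directed cycle, contradicting acyclicity. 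So $c$ is a tree vertex or a leaf, completing the proof.

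\emph{Main obstacle.} The argument is essentially bookkeeping with degree conditions; the one subtlety is the root case of the reverse direction, where excluding a reticulation child of $\rho$ uses acyclicity and the uniqueness of the source rather than any of the three forbidden substructures.
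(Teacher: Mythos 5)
Your proof is correct. The paper states this equivalence without proof (calling it well known and immediate from the definitions), and your argument is precisely the routine degree-counting verification that the paper implicitly relies on; in particular, you correctly handle the one non-obvious point, namely that the root's unique child can never be a reticulation, which follows from acyclicity and the uniqueness of the source rather than from the three forbidden substructures.
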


The next lemma was established by D\"ocker et al.~\cite[Lemma 7]{doecker21} and shows that the deletion of a reticulation edge of a tree-child network results in another tree-child network.

\begin{lemma}\label{l:reticulation-edge}
Let $\cN$ be a tree-child network on $X$, and let $e=(u,v)$ be a reticulation edge of $\cN$.
Then the  network obtained from $\cN$ by deleting $e$ and suppressing $u$ and~$v$ is a tree-child network on $X$.
\end{lemma}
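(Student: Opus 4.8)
The plan is to write $\cN^-$ for the network obtained from $\cN$ by deleting $e=(u,v)$ and then suppressing $u$ and $v$, and to proceed in three stages: first pin down the structure of $\cN$ in the immediate neighbourhood of $e$, then verify that $\cN^-$ is a phylogenetic network on $X$, and finally check that $\cN^-$ has no stack, no pair of sibling reticulations and no pair of parallel edges, so that it is tree-child by the preceding lemma.

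For the local structure I would first argue that $u$ is a tree vertex: if $u$ were the root then the root's unique child would be the reticulation $v$, contradicting tree-childness, and if $u$ were a reticulation then $u$ and $v$ would form a stack. Hence $u$ has in-degree one and out-degree two; write $p_u$ for its parent and $w_u$ for its child other than $v$. Because $u$ is a non-leaf whose two children are the reticulation $v$ and the vertex $w_u$, tree-childness of $\cN$ forces $w_u$ to be a tree vertex or a leaf. Applying the same reasoning to $v$, and writing $p_v$ for the parent of $v$ other than $u$ and $w_v$ for the unique child of $v$, tree-childness forces $w_v$ to be a tree vertex or a leaf, and $p_v$ is not the root. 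Finally, acyclicity and the absence of parallel edges in $\cN$ give that $w_u\ne v$ and $p_u\ne v$, that $v$ is not a descendant of $p_v$, and that $w_u\ne w_v$ (otherwise that common vertex would have in-degree two while being a tree vertex or a leaf); in particular, after deleting $e$ the vertices $u$ and $v$ are non-adjacent, so suppressing $u$ and $v$ are well-defined, independent operations.

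Next I would check that $\cN^-$ is a phylogenetic network on $X$. Deleting $e$ leaves the graph connected, since $v$ retains the incoming edge $(p_v,v)$ and, as $v$ is not a descendant of $p_v$, there is a $\rho$--$p_v$ path of $\cN$ avoiding $e$; suppressing the two resulting degree-two vertices preserves connectedness and acyclicity. The root $\rho$ is untouched, so it stays the unique vertex of in-degree zero and keeps out-degree one; the leaves, and hence $X$, are unchanged; and every other internal vertex keeps its in- and out-degree. Moreover, deleting $e$ and suppressing $u$ and $v$ never raises any in-degree, so the reticulations of $\cN^-$ are exactly the reticulations of $\cN$ other than $v$; in particular the only edges of $\cN^-$ not already in $\cN$ are the suppression edges $(p_u,w_u)$ and $(p_v,w_v)$, whose heads $w_u$ and $w_v$ are non-reticulations of $\cN^-$ with in-degree one.

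It remains to rule out the three forbidden configurations, and this is the step I expect to need the most care, since it is where the tree-child hypothesis on $\cN$ is genuinely used. A stack or a pair of sibling reticulations in $\cN^-$ would require an edge, respectively a pair of edges with a common tail, whose heads are reticulations of $\cN^-$; since $w_u$ and $w_v$ are not reticulations of $\cN^-$, any such edge is an edge of $\cN$ and would already witness a stack or a pair of sibling reticulations there, a contradiction. Likewise, a pair of parallel edges of $\cN^-$ cannot involve a suppression edge, because $w_u$ and $w_v$ have in-degree one in $\cN^-$, so such a pair lies entirely in $\cN$, again a contradiction. By the preceding lemma, $\cN^-$ is tree-child on $X$, as required.
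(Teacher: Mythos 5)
The paper does not prove this lemma at all: it is imported verbatim from D\"ocker et al.\ (Lemma 7 of \cite{doecker21}), so there is no in-paper argument to compare against. Your self-contained proof is correct. The structure is the natural one: tree-childness forces $u$ to be a tree vertex (otherwise the root would have the reticulation $v$ as its only child, or $u$ and $v$ would form a stack), forces the second child $w_u$ of $u$ and the child $w_v$ of $v$ to be tree vertices or leaves, and acyclicity together with the absence of parallel edges makes the two suppressions well defined and independent. The key observation --- that the only new edges $(p_u,w_u)$ and $(p_v,w_v)$ have heads of in-degree one that are non-reticulations of $\cN^-$ --- then lets you push any stack, sibling-reticulation pair, or parallel pair of $\cN^-$ back into $\cN$, and the characterisation in the preceding lemma finishes the job; this also silently handles the case where $(p_u,w_u)$ or $(p_v,w_v)$ was already an edge of $\cN$. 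One wording slip: you assert that ``$v$ is not a descendant of $p_v$'', which is false since $(p_v,v)$ is an edge; what you mean, and what you actually use for connectivity, is that $p_v$ is not a descendant of $v$, so that no directed path from $\rho$ to $p_v$ can traverse $e$. With that corrected the argument is complete.
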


\section{Phylogenetic digraphs and their extensions}\label{sec:new-def}
In this section, we provide formal definitions of the concepts of phylogenetic digraphs and their extensions. As we will see in Section~\ref{sec:measures}, these definitions generalise agreement forests for two phylogenetic trees to two phylogenetic networks.
We start by providing some high-level ideas that may guide the reader in developing some intuition before providing formal definitions.\ Essentially, a phylogenetic digraph~$\cD$ of a phylogenetic network $\cN$ on $X$ and with root $\rho$ is  a collection of
directed acyclic graphs that each contain at least one element in $X$ with the exception that $\rho$ may be a singleton in $\cD$, whose vertices of out-degree zero are bijectively labelled with the elements in $X$, and for which there is a vertex-disjoint embedding of its components in $\cN$. 
Let $\cM$ be such an embedding of $\cD$ in $\cN$, and let $v$ be a vertex of $\cM$ that either has in-degree zero, or is a reticulation of $\cN$ and has in-degree one and out-degree one. We obtain an extension $\cR$ of $\cM$ by starting at $v$ and extending~$\cM$ towards the root by adding edges of $\cN$ in a certain algorithmic way and then repeating this process for all such vertices of $\cM$. As suggested by the phrase {\it extension of $\cM$}, $\cR$ contains all edges of $\cM$. 
Although $\cD$ may have several embeddings in $\cN$, each embedding is anchored at the leaves of $\cN$ due to the requirement that the leaves of $\cD$ bijectively map to the elements in $X$. As we will see in Section~\ref{sec:prop}, for the purpose of computing the minimum number of edges in $\cN$ that are not contained in an extension relative to a given phylogenetic digraph $\cD$ of $\cN$, where the minimum is taken over all embeddings of $\cD$ in $\cN$ and their extensions, it is sufficient to  consider only a single extension of $\cD$.\\

\noindent {\bf Phylogenetic digraphs.}  Let $D$ be a connected directed acyclic graph, and let $Y$ be a finite set. We say that $D$ is a {\it leaf-labelled  acyclic digraph on $Y$} if one of the following applies:
\begin{enumerate}[(i)]
\item $|Y|=0$ and $D$ is the isolated vertex $\rho$,
\item $|Y|=1$ and $D$ is the isolated vertex labelled with the element in $Y$, or 
\item $|Y|\geq 1$, $D$ has at most one vertex of in-degree zero and out-degree one, in which case this vertex is $\rho$, the leaves of $D$ have in-degree one and out-degree zero and are bijectively labelled with the elements in $Y$, and all other vertices of $D$ have in-degree zero and out-degree two, in-degree one and out-degree two, or in-degree two and out-degree one.
\end{enumerate}
Similar to the leaf set of a phylogenetic network, we refer to $Y$ as the {\it leaf set} of~$D$. Furthermore, $\cL(D)$ denotes the leaf set of $D$. In contrast to a phylogenetic network, a leaf-labelled  acyclic digraph $D$ may have more than one vertex with in-degree zero. Moreover, for a vertex $w$ in $D$ that has two parents $v$ and $v'$, there does not necessarily exist a vertex $u$ such that there are edge-disjoint directed paths from $u$ to $v$ and from $u$ to $v'$ in $D$.

Let $\cN$ be a phylogenetic network on $X$ with root $\rho$, and let $D$ be a  leaf-labelled  acyclic digraph on $Y$ with $Y\subseteq X$. Recall that $D$ may or may not contain a vertex~$\rho$ with in-degree zero and out-degree one. We say that $\cN$ {\it displays} $D$ if there exists a subgraph of $\cN$ that is isomorphic to $D$ up to suppressing vertices with in-degree one and out-degree one, in which case  we call the subgraph  an {\it embedding} $M$ of $D$ in $\cN$ and view the edge set of $M$ as a subset of the edge set of $\cN$. More generally, for a collection $\cD=\{D_1, D_2, \ldots, D_k\}$ of leaf-labelled  acyclic digraphs, we say that $\cN$ {\it displays} $\cD$ if there exists an embedding $M_i$ of $D_i$ in $\cN$ for each $i\in\{1,2,\ldots,k\}$ such that $M_{j}$ and $M_{j'}$ are vertex disjoint for all distinct $j,j'\in\{1,2,\ldots,k\}$, in which case we refer to $\cM=\{M_1,M_2,\ldots,M_k\}$ as an {\it embedding} of $\cD$ in $\cN$. Now let~$\cM$ be an embedding of $\cD$ in $\cN$.\ Recalling that we allow tree vertices in $\cM$ to have in-degree and out-degree one, we say that  $\cM$ is {\it tree-child} if each non-leaf vertex of  $\cM$ has a child that is a tree vertex or a leaf.

Let $\cN$ be a phylogenetic network on $X$ with root $\rho$. Let $\cD=\{D_\rho,D_1, D_2, \ldots, D_k\}$ be a collection of  leaf-labelled  acyclic digraphs. Then $\cD$ is called a {\it phylogenetic digraph} of $\cN$ if the following three properties are satisfied:
\begin{enumerate}[(i)]
\item  the leaf sets $\cL(D_\rho),\cL(D_1), \cL(D_2), \ldots,\cL(D_k)$ partition $X$ and $D_\rho$ is the only element in $\cD$ that contains $\rho$,
\item $\rho$ is either an isolated vertex in $\cD$ or the unique vertex in $\cD$ with in-degree zero and out-degree one, and
\item there exists an embedding $\cM=\{M_\rho,M_1,M_2,\ldots,M_k\}$  of $\cD$ in $\cN$.
\end{enumerate}
Lastly, a phylogenetic digraph $\cD$ of $\cN$ is called a {\it tree-child digraph} of $\cN$ if each non-leaf vertex of $\cD$ has a child that is a leaf or a tree vertex.\\

\noindent {\bf Extensions and root extensions.} Let $\cN$ be a phylogenetic network on $X$. Furthermore, let  $\cM=\{M_\rho,M_1,M_2,\ldots,M_k\}$ be an embedding of a phylogenetic digraph $\cD=\{D_\rho,D_1, D_2, \ldots, D_k\}$ of $\cN$. We obtain an {\it  extension} $\cR$ of $\cD$ in $\cN$ from~$\cM$ by initially setting $\cR=\cM$ and then repeatedly applying one of the following two operations until no further such operation is possible:
\begin{enumerate}[(E1)]
\item For a vertex $v$ of $\cR$ with in-degree zero, add $(u,v)$ to $\cR$ if $u\notin \cR$.
\item For a vertex $v$ of $\cR$ with in-degree one and out-degree one and $v$ being a reticulation in $\cN$, add $(u,v)$ to $\cR$ if $u\notin \cR$.
\end{enumerate}
By construction, observe that $\cR$ contains exactly $k+1$ connected components and that there is a natural bijection between these components and the components in~$\cD$. We therefore set $\cR=\{R_\rho,R_1,R_2,\ldots,R_k\}$ and call $R_i$ an {\it extension} of $D_i$ in~$\cN$ for each $i\in\{\rho,1,2,\ldots,k\}$.
It follows from the construction of $\cR$ that there is no vertex of  out-degree two in $\cR$ that is not also a vertex of  out-degree two in $\cM$. Moreover, by construction, any underlying cycle in $\cR$ is also an underlying cycle in~$\cM$. Lastly, we define  $\cR$ to be {\it tree-child} precisely if $\cM$ is tree-child, and refer to~$\cM$ as the embedding of $\cD$ that {\it underlies} $\cR$.

We next introduce a special type of extension. We call an  extension $\cR$ of $\cD$ in~$\cN$ a {\it root extension} of $\cD$ in $\cN$ if it can be obtained from $\cM$ by initially setting $\cR=\cM$ and then repeatedly applying (E1) only until no further such operation is possible. Now let $\cR=\{R_\rho,R_1,R_2,\ldots,R_k\}$ be a root extension of $\cD$ in $\cN$. Similar to the terminology for an extension, we call $R_i$ a {\it root extension} of $D_i$ in $\cN$ for each $i\in\{\rho,1,2,\ldots,k\}$. Let $r$ be a vertex of in-degree zero and out-degree zero or two in $\cD$, and let $P$ be the unique maximal length directed path in $\cR$ that starts at a vertex $u$ of in-degree zero and ends at $r$. We refer to $P$ as the {\it root path} of $r$. Note that $P$ may have no edge in which case $u=r$. If $u\ne r$, then $u$ has in-degree zero and out-degree one in $\cR$. Figure~\ref{fig:example} illustrates the concepts of phylogenetic digraphs, extensions, and root extensions.

\begin{figure}[t]
\centering
\scalebox{0.87}{\input{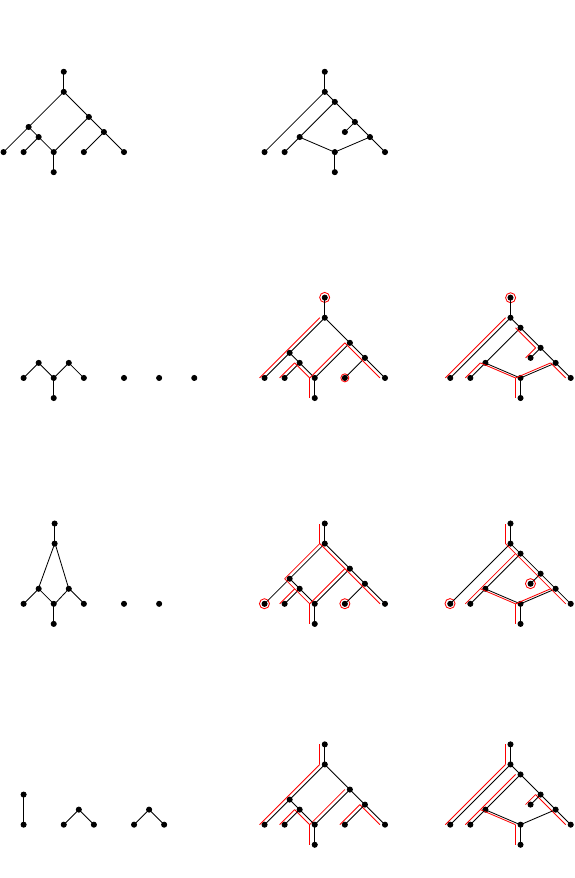_t}}
\caption{Two phylogenetic networks $\cN$ and $\cN'$ and three phylogenetic digraphs $\cD_1$, $\cD_2$, and $\cD_3$ for $\cN$ and $\cN'$.\ For each $i\in\{1,2,3\}$, $\cR_i$ is an extension of $\cD_i$ in $\cN$ and $\cR_i'$ is an extension of $\cD_i$ in $\cN'$, where the edges and vertices of the extensions are indicated  in red. Note that $\cR_3$ is not a root extension of $\cD_3$ in $\cN$.}
\label{fig:example}
\end{figure}

To ease reading throughout the remainder of the paper, we often consider a phylogenetic network $\cN$, a phylogenetic digraph $\cD$ of $\cN$, and an extension $\cR$ of $\cD$ in~$\cN$. In this case, we view the vertex and edge set of $\cR$ (as well as the vertex and edge set of any embedding of $\cD$ in $\cN$) as a subset of the vertex and edge set of $\cN$, respectively. Furthermore, for clarity, the in-degree (resp.\ out-degree) of a vertex~$v$ in $\cR$ refers to the number of edges in $\cR$ that are directed into (resp.\ out of) $v$. Lastly, let $\cD=\{D_\rho,D_1,D_2,\ldots,D_k\}$ be a phylogenetic digraph of a phylogenetic network $\cN$. Then there exists an embedding $\cM=\{M_\rho,M_1,M_2,\ldots,M_k\}$ of $\cD$ in~$\cN$ such that $M_i$ and $M_j$ are vertex disjoint for all distinct $i,j\in\{\rho,1,2,\ldots,k\}$ and an extension $\cR=\{R_\rho,R_1,R_2,\ldots,R_k\}$ of $\cD$ in $\cN$ such that $R_i$ and $R_j$ are vertex disjoint for all distinct $i,j\in\{\rho,1,2,\ldots,k\}$. It follows that each edge in $\cD$ corresponds to a unique directed path in $\cM$ (resp.\ $\cR$) whose non-terminal vertices all have in-degree one and out-degree one in $\cM$ (resp.\ $\cR$), and each vertex in $\cD$ corresponds to a unique vertex in $\cM$ (resp.\ $\cR$). Reversely, each edge in $\cM$  corresponds to a unique edge in $\cD$. We will freely use this correspondence throughout the paper.

\section{Properties of extensions}\label{sec:prop}

In this section we establish several results for extensions that will be useful in the subsequent sections. Let $\cD$ be a phylogenetic digraph  of a phylogenetic network $\cN$. Given the algorithmic definition of an extension, different orderings of the elements in $\cD$ may result in different extensions even for  a fixed underlying embedding. Also, if $\cR$ and $\cR'$ are extensions of $\cD$ in $\cN$ with distinct underlying embeddings, then $\cR$ and $\cR'$ are different. 

\begin{lemma}\label{l:root-ext-basic}
Let $\cD$ be a phylogenetic digraph of a phylogenetic network $\cN$ on $X$ with root $\rho$, and let $\cR$ be an extension of $\cD$ in $\cN$. Then the following hold:
\begin{enumerate}[{\rm (i)}]
\item If $v$ is a  vertex in $\cN$ that is not in $X\cup\{\rho\}$, then there is an edge $(v,w)$ in $\cN$ that is in $\cR$.
\item Each vertex in $\cN$ is contained in $\cR$. 
\end{enumerate}
\end{lemma}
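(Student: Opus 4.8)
The plan is to prove part (i) first by a downward-induction / contradiction argument on the structure of $\cN$, and then derive part (ii) as an easy consequence. For part (i), recall that every vertex $v$ of $\cN$ that is not in $X\cup\{\rho\}$ is either a tree vertex (in-degree one, out-degree two) or a reticulation (in-degree two, out-degree one). The key observation is that the extension $\cR$ ``reaches'' every leaf of $\cN$: since the leaf sets of the components of $\cD$ partition $X$, for each $x\in X$ there is a component $R_i$ of $\cR$ containing $x$, and since $x$ has in-degree one in $\cN$, the extension rule (E1) (which is applied to every in-degree-zero vertex until none remain, and every extension contains a root extension) forces at least the edge entering $x$ into $\cR$; more generally, I would argue that $\cR$ contains a directed path from some vertex up to $x$. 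I would then suppose for contradiction that $v\notin X\cup\{\rho\}$ has no out-edge $(v,w)$ of $\cN$ lying in $\cR$.

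The core of the argument is to show that a vertex $v$ with no out-edge in $\cR$ cannot be ``reachable from below'' in a way that is consistent with $\cR$ covering all of $X$. Concretely: consider the set $S$ of vertices of $\cN$ that have no out-edge in $\cR$. No leaf lies in $S$ (leaves have out-degree zero, but we only care about non-leaves here — actually leaves trivially satisfy the claim vacuously, so we restrict to non-leaves $v\notin\{\rho\}$). If $v\in S$ is a tree vertex of $\cN$ with children $a,b$, then since $v$ is in $\cR$ only if some edge of $\cR$ touches it, and the only edges of $\cR$ incident to $v$ would be $(v,a)$, $(v,b)$, or an in-edge of $v$; if $v$ is in $\cR$ at all then by the in-degree/out-degree constraints of embeddings and of rules (E1)–(E2), a vertex of $\cR$ with positive in-degree that is a tree vertex of $\cN$ must retain out-degree at least one in $\cR$ (embeddings are leaf-labelled acyclic digraphs whose non-root internal vertices have out-degree one or two, and the extension rules never delete edges). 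So the only way $v\in S$ is if $v\notin\cR$ entirely — which then must be shown impossible by tracing a path down to a leaf. I would make this precise by taking a vertex $v$ minimal (closest to the leaves, using the acyclic order) among non-leaf, non-$\rho$ vertices violating (i): then every child of $v$ either is a leaf or satisfies (i), hence lies in $\cR$ with an out-edge, hence lies in $\cR$; but a vertex all of whose out-neighbours lie in $\cR$ and which itself is not $\rho$ must, by the definition of embedding together with how (E1)/(E2) propagate edges upward, also lie in $\cR$ with at least one out-edge present — contradiction. The delicate point, and the main obstacle, is handling reticulations of $\cN$ correctly: a reticulation $v$ of $\cN$ may appear in $\cR$ with in-degree one (rule (E2) then adds the second in-edge) or in-degree two, and one must check that in either case its unique out-edge is forced into $\cR$, using that $v$'s child lies in $\cR$ and that an embedding cannot ``pass through'' a reticulation without using its out-edge.

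Once part (i) is established, part (ii) follows immediately: any vertex $v$ of $\cN$ with $v\notin X\cup\{\rho\}$ is the tail of an edge $(v,w)\in\cR$ by (i), hence $v\in\cR$; every $x\in X$ lies in some component of $\cD$ and hence of $\cR$; and $\rho$ lies in $D_\rho$, hence in $R_\rho\subseteq\cR$. Therefore every vertex of $\cN$ is contained in $\cR$. I would present part (ii) in one or two sentences after the inductive proof of (i).
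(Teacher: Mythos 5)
Your proposal is correct and follows essentially the same route as the paper: both hinge on the observation that $\cR$ contains no vertex of out-degree zero outside $X\cup\{\rho\}$, together with the fact that a vertex missing from $\cR$ that is closest to the leaves would leave a child to which (E1) or (E2) still applies, contradicting that $\cR$ is an extension. The only differences are organisational — you fold the minimal-counterexample argument into a strengthened version of (i) and read (ii) off immediately, whereas the paper proves (i) as the bare out-degree observation and runs the counterexample argument inside (ii) — plus your justification that vertices of $\cR$ retain positive out-degree should also note that vertices introduced by (E1)/(E2) enter $\cR$ as tails of new edges, not merely that no edges are deleted.
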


\begin{figure}[t]
\center
\scalebox{0.87}{\input{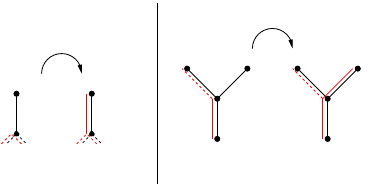_t}}
\caption{Assuming that $v\notin \cR$, the setup as described in the proof of Lemma~\ref{l:root-ext-basic}(ii) for when (I) $w$ is a tree vertex or a leaf, and (II) $w$ is a reticulation in $\cN$. In both cases, an application of (E1) or (E2) can be used to extend $R_i$ by an additional edge so that the resulting extension contains $v$. Red solid lines indicate vertices and edges of $R_i$. Black (resp. red) dashed lines indicate vertices and edges of $\cN$ (resp. $R_i$) that may or may not be vertices and edges of  $\cN$ (resp. $R_i$).}
\label{fig:4-1-ii}
\end{figure}

\begin{proof}
Let $\cD=\{D_\rho,D_1,D_2,\ldots,D_k\}$, and let $\cR=\{R_\rho,R_1,R_2,\ldots,R_k\}$. To see that (i) holds, recall that $\cR$ does not contain any vertex with out-degree zero that is not in $X\cup\{\rho\}$. To complete the proof, we establish (ii). This part of the proof is illustrated in Figure~\ref{fig:4-1-ii}. By definition of $\cD$, the root and each leaf of $\cN$ is contained in $\cR$.  Towards a contradiction, we may therefore assume that there is a tree vertex or a reticulation in $\cN$ that is not contained in $\cR$. Let $v$ be a vertex of $\cN$ that is not in $\cR$ such that every vertex that is distinct from $v$ and lies on a directed path from~$v$ to a leaf in $\cN$ is in $\cR$. Furthermore, let $w$ be a child of $v$. If $w$ is a tree vertex or leaf in $\cN$, then there exists a component $R_i$ with $i\in\{\rho,1,2,\ldots,k\}$ such that $w$ has in-degree zero in $R_i$, thereby contradicting that $R_i$ is an extension of $D_i$ as we can apply (E1). Otherwise, if $w$ is a reticulation in $\cN$, then it follows from (i) that there exists a component $R_i$ with $i\in\{\rho,1,2,\ldots,k\}$ such that $w$ has in-degree zero, or in-degree one and out-degree one in $R_i$, thereby again contradicting that~$R_i$ is an extension of $D_i$ as we can apply either (E1) or (E2), respectively. 
\end{proof}

\begin{lemma}\label{l:root-ext-tree-vertex}
Let $\cN$ be a phylogenetic network on $X$. Furthermore, let $\cM$ be an embedding of a phylogenetic digraph $\cD$ of $\cN$, and let $\cR$ be an extension of $\cD$ such that $\cM$ underlies $\cR$. Then the following hold for each tree vertex $v$ in $\cN$.
\begin{enumerate}[{\rm (i)}]
\item If $v$ is in $\cM$, then no edge directed out of $v$ is in $\cR$ and not in $\cM$.
\item If $v$ is not in $\cM$, then exactly one of the two edges directed out of $v$ is contained in $\cR$.
\end{enumerate}
\end{lemma}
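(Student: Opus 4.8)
The plan is to exploit the built-in asymmetry of the extension rules (E1) and (E2): each application adds a single edge $(u,v)$ whose head $v$ is already present in $\cR$ (and satisfies a degree condition) and whose tail $u$ is required to satisfy $u\notin\cR$, i.e.\ $u$ is genuinely new. I would first isolate two consequences of this that do all the work. \emph{Consequence A:} every edge of $\cR$ that is not in $\cM$ has its tail outside $\cM$ — indeed, since $\cM\subseteq\cR$ is maintained throughout the construction, any such edge must have been introduced by some application of (E1) or (E2), and its tail was not in $\cR$ (hence not in $\cM$) at that moment. \emph{Consequence B:} once a vertex enters $\cR$, no edge directed out of it is ever added afterwards — when a vertex $u$ is introduced it had no $\cR$-edges incident with it (because $u\notin\cR$), it receives exactly the one out-edge for which it was added, and any later application of (E1) or (E2) adds an edge whose tail is not currently in $\cR$ and is therefore different from $u$.

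For part (i), I would take a tree vertex $v$ of $\cN$ with $v\in\cM$ and argue by contradiction: if some edge $(v,w)$ were in $\cR$ but not in $\cM$, then by Consequence A its tail $v$ would lie outside $\cM$, contradicting $v\in\cM$. Hence no edge directed out of $v$ is in $\cR\setminus\cM$.

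For part (ii), let $v$ be a tree vertex of $\cN$ with $v\notin\cM$, and let $(v,w_1)$ and $(v,w_2)$ be the two edges of $\cN$ directed out of $v$. Since a tree vertex of $\cN$ is neither the root nor a leaf, $v\notin X\cup\{\rho\}$, so Lemma~\ref{l:root-ext-basic}(i) supplies an edge directed out of $v$ lying in $\cR$; thus at least one of $(v,w_1),(v,w_2)$ is in $\cR$. For the matching upper bound, $v$ is in $\cR$ by Lemma~\ref{l:root-ext-basic}(ii) while $v\notin\cM$, so $v$ must have been introduced during the construction of $\cR$; the head of an (E1)/(E2) application is always pre-existing, so $v$ can only have entered as the tail of such an application. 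Consequence B then gives that $v$ has out-degree exactly one in $\cR$, so exactly one of $(v,w_1),(v,w_2)$ is in $\cR$.

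I expect the only delicate point to be the bookkeeping around the clause ``$u\notin\cR$'' in (E1) and (E2): one must be careful that the tail of every added edge is genuinely new — so that a vertex already in $\cM$, and more generally any vertex already placed into $\cR$, can never acquire an additional out-edge — and that the head of every added edge is always pre-existing — so that the sole way a vertex enters $\cR$ is as such a tail. Once this is pinned down, no case analysis on the local structure of $\cN$ around $v$ is needed beyond the observation that a tree vertex of $\cN$ lies outside $X\cup\{\rho\}$, and both parts follow immediately.
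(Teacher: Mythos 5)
Your proposal is correct and follows essentially the same route as the paper: the paper's proof also rests on the observation (phrased there as "by the definition of an extension") that each application of (E1) or (E2) adds an edge whose tail is a fresh vertex, so no vertex already in $\cR$ ever acquires a new out-edge, together with Lemma~\ref{l:root-ext-basic} and the fact that neither $\cM$ nor $\cR$ has an out-degree-zero vertex outside $X\cup\{\rho\}$. Your Consequences A and B just make that invariant explicit; no substantive difference.
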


\begin{proof}
Let $u$ be the parent of $v$, and let $w$ and $w'$ be the two children of $v$ in $\cN$.  Furthermore, let $\cR=\{R_\rho,R_1,R_2,\ldots,R_k\}$. We first show that (i) holds. Since $\cM$ does not contain any vertex of out-degree zero that is not in $X\cup\{\rho\}$, it follows that at least one of $(v,w)$ and $(v,w')$ is in $\cM$. Without loss of generality, we may assume that $(v,w)\in \cM$. The setup of the proof is shown in Figure~\ref{fig:4-2}(i). If $(v,w')\in \cM$, then the result clearly holds. On the other hand, if $(v,w')\notin \cM$, then it follows from the definition of an extension that  $(v,w')\notin \cR$, thereby establishing (i). We now turn to (ii) which is illustrated in Figure~\ref{fig:4-2}(ii). It follows from Lemma~\ref{l:root-ext-basic}(ii) that $v\in \cR$. Since $\cR$ does not contain any vertex with out-degree zero that is not in $X\cup\{\rho\}$, at least one of $(v,w)$ and $(v,w')$ is contained in $\cR$. Moreover, again by the definition of an extension, at most one of $(v,w)$ and $(v,w')$ is contained in $\cR$. The lemma now follows.
\end{proof}

\begin{figure}[t]
\center
\scalebox{0.87}{\input{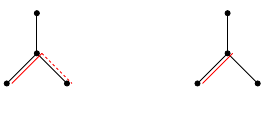_t}}
\caption{Setup as described in the proof of parts (i) and (ii) of Lemma~\ref{l:root-ext-tree-vertex}. The red solid line in (i) (resp. (ii)) indicates that $(v,w)$ is an edge of $\cM$ (resp. $\cR$), and the red dashed line in (i) indicates that $(v,w')$ may or may not be an edge of $\cM$.}
\label{fig:4-2}
\end{figure}

Let $\cD$ be a phylogenetic digraph of a phylogenetic network $\cN$ on $X$. The next proposition shows that the number of edges that are in $\cN$ but not in an extension of $\cD$ does not depend on the extension.

\begin{proposition}\label{p:equal-size}
Let $\cN$ be a phylogenetic network on $X$, and let $\cR$ and $\cR'$ be two extensions of a phylogenetic digraph $\cD$ of $\cN$. Then $$|E_\cN-E_\cR|=|E_\cN-E_{\cR'}|.$$ 
\end{proposition}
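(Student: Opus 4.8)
The plan is to exhibit a closed formula for $|E_{\cN}-E_{\cR}|$ that depends only on $\cN$ and $\cD$ (and not on the extension $\cR$ or on its underlying embedding); the same formula then also equals $|E_{\cN}-E_{\cR'}|$, which gives the claim. Since the edge set of any extension is, by construction, a subset of $E_{\cN}$, it suffices to prove that $|E_{\cR}|$ is determined by $\cN$ and $\cD$.

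First I would analyse the embedding $\cM=\{M_\rho,M_1,\ldots,M_k\}$ of $\cD$ that underlies $\cR$. By the definition of ``displays'', each $M_i$ is a subgraph of $\cN$ that is isomorphic to $D_i$ after suppressing all of its vertices of in-degree one and out-degree one. Inspecting the three cases in the definition of a leaf-labelled acyclic digraph shows that $D_i$ has \emph{no} vertex of in-degree one and out-degree one, so $M_i$ is obtained from $D_i$ purely by subdividing edges. As each subdivision increases both the vertex count and the edge count by one, we get $|E_{M_i}|-|V_{M_i}|=|E_{D_i}|-|V_{D_i}|$ for every $i$; summing over $i$ and using that the $M_i$ (and the $D_i$) are pairwise vertex disjoint yields $|E_{\cM}|-|V_{\cM}|=|E_{\cD}|-|V_{\cD}|$, where $E_{\cM}=\bigcup_i E_{M_i}$ and $V_{\cM}=\bigcup_i V_{M_i}$ (and similarly for $\cD$). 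In particular this difference depends only on $\cD$.

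Next I would account for the edges gained in passing from $\cM$ to $\cR$. Every application of (E1) or (E2) adds an edge $(u,v)$ of $\cN$ with the proviso that $u\notin\cR$ beforehand; hence each such step adds exactly one new vertex together with exactly one new edge, so $|E_{\cR}|-|E_{\cM}|=|V_{\cR}|-|V_{\cM}|$. By Lemma~\ref{l:root-ext-basic}(ii), $V_{\cR}=V_{\cN}$. Combining the two identities,
$$|E_{\cR}|\;=\;|E_{\cM}|+|V_{\cN}|-|V_{\cM}|\;=\;|V_{\cN}|+|E_{\cD}|-|V_{\cD}|,$$
which depends only on $\cN$ and $\cD$. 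Since $E_{\cR}\subseteq E_{\cN}$ we obtain $|E_{\cN}-E_{\cR}|=|E_{\cN}|-|V_{\cN}|-|E_{\cD}|+|V_{\cD}|$, and the identical computation applied to $\cR'$ gives the same value; hence $|E_{\cN}-E_{\cR}|=|E_{\cN}-E_{\cR'}|$.

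There is no serious obstacle here: the argument is essentially a double count of vertices against edges, split into the embedding part (governed by subdivision) and the extension part (each (E1)/(E2) step being vertex- and edge-balanced). The only point that genuinely needs care is the observation that (E1) and (E2) each add precisely one new vertex along with their new edge, which is exactly what the condition ``$u\notin\cR$'' in their statements guarantees (this is also the reason these operations never create a new underlying cycle). One should additionally check the degenerate cases where $D_\rho$, or some $D_i$, is a single (possibly $\rho$-labelled) vertex, but there $M_i=D_i$ and the identity $|E_{M_i}|-|V_{M_i}|=|E_{D_i}|-|V_{D_i}|$ holds trivially.
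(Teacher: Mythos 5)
Your argument is correct, but it takes a genuinely different route from the paper's proof. The paper argues locally: it first disposes of the case where $\cR$ and $\cR'$ share the same underlying embedding (essentially your observation that each application of (E1)/(E2) adds one vertex together with one edge, combined with Lemma~\ref{l:root-ext-basic}(ii)), and then, for distinct embeddings $\cM\ne\cM'$, it classifies every edge of $\cN$ according to the tail vertex, runs a four-case analysis on the two out-edges of each tree vertex using Lemma~\ref{l:root-ext-tree-vertex}, and closes with a bijection between the tree vertices whose two out-edges lie in $\cM$ but not $\cM'$ and vice versa. You instead give a single global double count that produces the closed formula $|E_\cN-E_\cR|=|E_\cN|-|V_\cN|-|E_\cD|+|V_\cD|$, resting on two invariances: subdivision preserves $|E|-|V|$ when passing from $\cD$ to any embedding $\cM$ (using that no $D_i$ has a vertex of in-degree one and out-degree one, and that the $M_i$ are pairwise vertex disjoint), and each (E1)/(E2) step is vertex- and edge-balanced, with $V_\cR=V_\cN$ supplied by Lemma~\ref{l:root-ext-basic}(ii). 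Your approach avoids Lemma~\ref{l:root-ext-tree-vertex} altogether and buys an explicit formula for the cut size $c_\cD$ in terms of $\cN$ and $\cD$ alone, which is arguably more informative than the bare equality; the paper's local analysis, on the other hand, is the template that is reused (with root extensions in place of extensions) for Proposition~\ref{p:equal-size-tc}, whose proof is omitted by analogy, so the case analysis earns its keep elsewhere. Note that your formula would not carry over to that setting unchanged, since root extensions need not contain every vertex of $\cN$.
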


\begin{proof}
Let $\cM$ and $\cM'$ be the embeddings of $\cD$ in $\cN$ that underlie $\cR$ and $\cR'$, respectively. First assume that $\cM=\cM'$. Since $|E_\cN-E_\cM|=|E_\cN-E_{\cM'}|$ and each iteration of (E1) or (E2) in the construction of $\cR$ and $\cR'$ either adds a new edge $(v,w)$ such that  $w$ is already in $\cR$  and $v$ is not already in $\cR$ or a new edge $(v',w')$ such that  $w'$ is already in $\cR'$  and $v'$ is not already in $\cR'$, the result follows from Lemma~\ref{l:root-ext-basic}(ii). Second assume that $\cM\ne\cM'$. Consider an edge $e=(v,w)$ of $\cN$. If $v=\rho$, then, as $\rho$ has out-degree one in $\cN$, either $e\in\cR$ and $e\in \cR'$, or $e\notin \cR$ and $e\notin \cR'$. Furthermore, if $v$ is a reticulation in $\cN$, then it follows from Lemma~\ref{l:root-ext-basic}(i) that $e\in \cR$ and  $e\in \cR'$. We next consider all edges in $\cN$ that are directed out of a tree vertex. Let $v$ be a tree vertex of $\cN$, and let $e=(v,w)$ and $e'=(v,w')$ be the two edges directed out of $v$. We next consider four cases that will subsequently be used for a counting argument.
\begin{enumerate}[(1)]
\item Suppose that both of $e$ and $e'$ are contained in one of $\cM$ and $\cM'$, and  neither $e$ nor $e'$ is contained in the other embedding.
Without loss of generality, we may assume that both of $e$ and $e'$ are contained in $\cM$ and, therefore, in $\cR$. As~$\cM'$ does not contain a vertex with out-degree zero that is not in $X\cup\{\rho\}$, it follows that $v\notin \cM'$. Thus, by Lemma~\ref{l:root-ext-tree-vertex}(ii), exactly one of $e$ and $e'$ is in $\cR'$.
\item Suppose that exactly one of $e$ and $e'$ is contained in one of $\cM$ and $\cM'$ and neither $e$ nor $e'$ is contained in the other embedding.
Without loss of generality, we may assume that $\cM$ contains $e$ and does not contain $e'$, and that $\cM'$ contains neither $e$ nor $e'$. Evidently $e\in \cR$ and, by the definition of an extension, $e'\notin \cR$. Moreover, as $v\notin \cM'$ it follows from Lemma~\ref{l:root-ext-tree-vertex}(ii) that exactly one of $e$ and $e'$ is in $\cR'$. 
\item Suppose that exactly one of $e$ and $e'$ is contained in each of  $\cM$ and $\cM'$.
Again by Lemma~\ref{l:root-ext-tree-vertex}(i), exactly one of $e$ and $e'$ is contained in each of $\cR$ and  $\cR'$.
\item Suppose that both of $e$ and $e'$ are contained in one of $\cM$ and $\cM'$, and exactly one of $e$ and $e'$ is contained in the other embedding.
Without loss of generality, we may assume that both of $e$ and $e'$ are contained in $\cM$ and, therefore, in $\cR$. Then, by Lemma~\ref{l:root-ext-tree-vertex}(i), exactly one of $e$ and $e'$ is contained in $\cR'$.
\end{enumerate}
It follows that, if Case (2) or (3) applies to $v$, then exactly one of $e$ and $e'$ is an element in $E_\cN-E_\cR$ and exactly one of $e$ and $e'$ is an element in $E_\cN-E_{\cR'}$. We next turn to Cases (1) and (4).
Let $V$ (resp.\ $V'$) denote the set that contains precisely each tree vertex of $\cN$ whose two outgoing edges are both in $\cM$ (resp.\ $\cM'$). Since $\cM$ and $\cM'$ are embeddings of $\cD$ in $\cN$, we have $|V|=|V'|$ and, consequently, $|V-V'|=|V'-V|$. Hence, the number of tree vertices in $\cN$ for which both outgoing edges are in $\cR$ and exactly one outgoing edge is in $
\cR'$ is equal to the number of tree vertices in $\cN$ for which both outgoing edges are in $\cR'$ and exactly one outgoing edge is in $\cR$. 
This completes the proof of the proposition.
\end{proof}

The last proposition motivates the following terminology. Let $\cD$ be a phylogenetic digraph of a phylogenetic network $\cN$ on $X$, and let $\cR$ be an extension of $\cD$ in $\cN$. We set $c_\cD=|E_\cN-E_\cR|$ and refer to $c_\cD$ as the {\it cut size} of $\cD$ in $\cN$. By Proposition~\ref{p:equal-size}, $c_\cD$ is well defined. 

The next two results consider cut sizes of root extensions in tree-child networks.

\begin{lemma}\label{l:extension-equiv}
Let $\cD$ be a phylogenetic digraph of a tree-child network $\cN$ on $X$, and let $\cR$ be an extension of $\cD$ in $\cN$ with cut size $c_\cD$. Then there also exists a root extension of $\cD$ in $\cN$ with cut size $c_\cD$.
\end{lemma}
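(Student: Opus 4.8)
The plan is to produce the required root extension directly from the embedding $\cM$ of $\cD$ that underlies $\cR$. Concretely, set $\cR_0=\cM$ and repeatedly apply operation (E1) until it can no longer be applied; let $\cR_0$ denote the result. I would then show that $\cR_0$ is in fact an extension of $\cD$ in $\cN$ --- not merely the outcome of the (E1)-phase --- so that, being obtained from $\cM$ by applications of (E1) only, it is a root extension. Once this is established, Proposition~\ref{p:equal-size} applies to the two extensions $\cR$ and $\cR_0$ of $\cD$ in $\cN$ and gives $|E_\cN-E_{\cR_0}|=|E_\cN-E_\cR|=c_\cD$, so $\cR_0$ has cut size $c_\cD$, as desired. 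Thus the whole proof reduces to the claim that when the (E1)-phase terminates, no application of (E2) remains.

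The heart of the argument, and the only place the tree-child hypothesis is used, is to show that $\cR_0$ already contains every vertex of $\cN$. I would argue by contradiction in the spirit of the proof of Lemma~\ref{l:root-ext-basic}(ii). Suppose some vertex of $\cN$ is not in $\cR_0$. Since $\cN$ is acyclic, choose such a vertex $v$ with the property that every proper descendant of $v$ in $\cN$ lies in $\cR_0$. Every leaf of $\cN$ belongs to some component of $\cD$, hence to $\cM\subseteq\cR_0$, and $\rho$ belongs to $D_\rho$, hence to $\cR_0$; so $v$ is a non-leaf vertex distinct from $\rho$. Because $\cN$ is tree-child, $v$ has a child $w$ that is a tree vertex or a leaf, so $w$ has in-degree one in $\cN$ and its unique parent is $v$. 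Now $w$ is a proper descendant of $v$, so $w\in\cR_0$, while $(v,w)\notin\cR_0$ because $v\notin\cR_0$; hence $w$ has in-degree zero in $\cR_0$, and therefore (E1) can still be applied to $w$ --- contradicting the termination of the (E1)-phase. So every vertex of $\cN$ is in $\cR_0$.

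Finally, I would close the loop: both (E1) and (E2) add an edge $(u,v)$ for which $u$ is required not to be a vertex of the current graph, and since $\cR_0$ contains every vertex of $\cN$ no such $u$ exists; hence neither operation is applicable to $\cR_0$. Therefore $\cR_0$ qualifies as an extension of $\cD$ in $\cN$, and in particular as a root extension, and the cut-size equality from Proposition~\ref{p:equal-size} finishes the proof. I expect the exhaustiveness claim to be the only delicate step: in a general phylogenetic network the (E1)-phase can terminate while (E2)-type moves remain applicable, so it is precisely tree-child-ness --- the guarantee that every non-leaf vertex has a tree-vertex or leaf child to retreat to --- that forces the (E1)-only closure to be exhaustive, and hence to be a genuine (root) extension.
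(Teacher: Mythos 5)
Your proof is correct, but it takes a genuinely different route from the paper's. The paper works directly with the given extension $\cR$: if $\cR$ is not a root extension, it locates a reticulation edge $(u,v)\in\cR$ with $(u,v)\notin\cM$, uses tree-childness to see that $u$ is a tree vertex whose other child $v'$ is a tree vertex or leaf with $(u,v')\notin\cR$, and reroutes the directed path of $\cR$ ending at $u$ from $v$ to $v'$; this keeps the underlying embedding and the cut size unchanged while strictly decreasing the number of reticulation edges in $\cR$ that are not in $\cM$, so iterating terminates in a root extension. You instead build the root extension from scratch as the (E1)-closure of $\cM$, prove via tree-childness that this closure already contains every vertex of $\cN$ (so neither (E1) nor (E2) remains applicable, the closure is a genuine extension, and hence a root extension), and then invoke Proposition~\ref{p:equal-size} to transfer the cut size; since that proposition precedes the lemma, there is no circularity. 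Both arguments isolate tree-childness as the essential hypothesis, just at different points: the paper uses it to guarantee a place to reroute to, you use it to guarantee that the (E1)-only closure is exhaustive (your minimal-counterexample step is sound, since the chosen tree-vertex or leaf child $w$ has $v$ as its unique parent in $\cN$, so $w$ has in-degree zero in the closure and (E1) still applies). Your version is shorter and delegates all counting to Proposition~\ref{p:equal-size}; the paper's version additionally exhibits the root extension as a local modification of the given $\cR$ over the same underlying embedding, which is not needed for the statement but makes the relationship between $\cR$ and the produced root extension explicit.
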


\begin{proof}
Let $\cM$ be the embedding of $\cD$ in $\cN$ that underlies $\cR$. If $\cR$ is not a root extension of $\cD$ in $\cN$, then there exists a reticulation edge $(u,v)$ in $\cN$ such that $(u,v)\in\cR$ and $(u,v)\notin\cM$. Since $\cN$ is tree-child, $u$ is a tree vertex. Let $v'$ be the child of $u$ in $\cN$ with $v\ne v'$. Clearly, $v'$ is a tree vertex or a leaf in $\cN$, and $(u,v')\notin \cR$. Let $R_v$ be the component in $\cR$ that contains $v$, and let $R_{v'}$ be the component in $\cR$ that contains $v'$. If $R_v$ and $R_{v'}$ are distinct, obtain $R'_v$ from $R_v$ by deleting each vertex $s$ for which there exists a directed path from $s$ to $u$, and obtain  $R'_{v'}$ from $R_{v'}$ by adding $(u,v')$ and each edge of $R_v$ that is contained in a directed path ending at $u$ and, if $R_v$ and $R_{v'}$ are not distinct, obtain $R'_v$ from $R_v$ by deleting $(u, v)$ and adding $(u, v')$.
Then $\cR'=(\cR-\{R_v,R_{v'}\})\cup\{R_v',R_{v'}'\}$ 
is an extension of $\cD$ in $\cN$ with cut size $c_\cD$. It is straightforward to check that $\cM$ is the embedding of $\cD$ in $\cN$ that underlies $\cR'$ and that $\cR'$ has one reticulation edge less than $\cR$. If $\cR'$ is not a root extension  of $\cD$ in $\cN$, then repeat the construction for a reticulation edge in $\cN$ that is in $\cR'$ but not in $\cM$ until no such edge remains. This completes the proof of the lemma.
\end{proof}

\begin{figure}[t]
    \centering
\scalebox{0.87}{\input{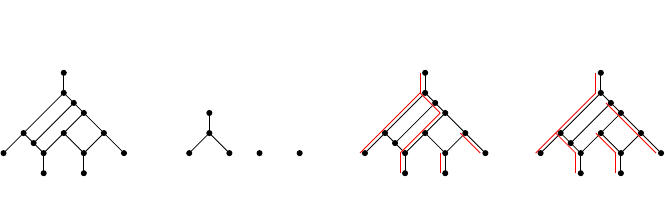_t}}
    \caption{A phylogenetic network $\cN$, a phylogenetic digraph $\cD$ of $\cN$, and two root extensions $\cR$ and $\cR'$ (indicated by red lines) of $\cD$ in $\cN$ with $|E_\cN-E_\cR|=6\ne5=|E_\cN-E_{\cR'}|$.}
    \label{fig:cut}
\end{figure}

\noindent We end this section by noting that Proposition~\ref{p:equal-size} does not hold for two root extensions of  arbitrary phylogenetic networks (see Figure~\ref{fig:cut} for an example). Nevertheless, we have the following result for tree-child networks. Its proof is similar to that of Proposition~\ref{p:equal-size} and is omitted.

\begin{proposition}\label{p:equal-size-tc}
Let $\cN$ be a tree-child network on $X$, and let $\cR$ and $\cR'$ be two root extensions of a phylogenetic digraph $\cD$ of $\cN$. Then $$|E_\cN-E_\cR|=|E_\cN-E_{\cR'}|.$$ 
\end{proposition}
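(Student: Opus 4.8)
The plan is to mirror the proof of Proposition~\ref{p:equal-size}, isolating the one place where that argument uses (E2) and repairing it for root extensions by means of the tree-child hypothesis. The first step is to check that, when $\cN$ is tree-child, the conclusions of Lemma~\ref{l:root-ext-basic} and Lemma~\ref{l:root-ext-tree-vertex} remain valid when $\cR$ is a root extension of $\cD$ in $\cN$. The only step in the proof of Lemma~\ref{l:root-ext-basic}(ii) that genuinely invokes (E2) is the case in which the child $w$ of the minimal omitted vertex $v$ is a reticulation sitting with in-degree one and out-degree one inside some component $R_i$; without (E2), this case is no longer immediate. But in a tree-child network this situation never arises: the minimal vertex $v\notin\cR$ is neither a leaf nor $\rho$ (both lie in every embedding of $\cD$, hence in $\cR$), so $v$ is a non-leaf vertex of $\cN$ and therefore has a child $w^*$ that is a tree vertex or a leaf. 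Such a $w^*$ has in-degree one in $\cN$ with $v$ as its unique parent, and $w^*\in\cR$ by the minimality of $v$; hence either $(v,w^*)\in\cR$, which forces $v\in\cR$, or $w^*$ has in-degree zero in $\cR$, in which case (E1) is still applicable to $w^*$ --- either way a contradiction. This gives Lemma~\ref{l:root-ext-basic}(ii) for root extensions, and then Lemma~\ref{l:root-ext-basic}(i) and Lemma~\ref{l:root-ext-tree-vertex} follow by the arguments already recorded, since those arguments use only (E1)-type reasoning together with Lemma~\ref{l:root-ext-basic}(ii).

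With these tools in hand for root extensions, I would compute $|E_\cN-E_\cR|$ by counting, over all vertices $v$ of $\cN$, the edges directed out of $v$ that are missing from $\cR$. The unique edge out of $\rho$ lies in $\cR$ exactly when $\rho$ is non-isolated in $\cD$, a property of $\cD$ alone; the unique edge out of each reticulation lies in $\cR$ by Lemma~\ref{l:root-ext-basic} (a reticulation is not in $X\cup\{\rho\}$, so it has out-degree at least one, hence exactly one, in $\cR$). For a tree vertex $v$ of $\cN$, Lemma~\ref{l:root-ext-tree-vertex}(ii) yields exactly one missing out-edge when $v\notin\cM$, while Lemma~\ref{l:root-ext-tree-vertex}(i) shows that when $v\in\cM$ the out-edges of $v$ in $\cR$ coincide with those in $\cM$, so the number of missing out-edges is $2$ minus the out-degree of $v$ in $\cM$. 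Writing $T$ for the set of tree vertices of $\cN$ and $n_2$ for the number of out-degree-two vertices of $\cM$, a short count gives
$$|E_\cN-E_\cR|\;=\;|T|\;-\;n_2\;+\;\varepsilon,$$
where $\varepsilon=1$ if $\rho$ is isolated in $\cD$ and $\varepsilon=0$ otherwise.

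It remains to note that $n_2$ does not depend on the embedding: a vertex of out-degree two in $\cM$ has out-degree two in $\cN$, hence is a tree vertex of $\cN$ both of whose out-edges lie in $\cM$, and under the canonical correspondence between $\cD$ and any of its embeddings (recorded at the end of Section~\ref{sec:new-def}) these are precisely the images of the out-degree-two vertices of $\cD$. Hence the right-hand side of the display depends only on $\cN$ and $\cD$, so $|E_\cN-E_\cR|=|E_\cN-E_{\cR'}|$, as required. (Equivalently, once Lemmas~\ref{l:root-ext-basic} and~\ref{l:root-ext-tree-vertex} are available for root extensions one can simply rerun the four-case bookkeeping of the proof of Proposition~\ref{p:equal-size}.) I expect the main --- and only genuinely new --- obstacle to be the repair of Lemma~\ref{l:root-ext-basic}(ii) above; this is exactly the point at which the tree-child hypothesis enters, consistent with the failure of the statement for the non-tree-child network of Figure~\ref{fig:cut}.
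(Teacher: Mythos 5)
Your proof is correct. The paper omits the proof of this proposition, saying only that it is similar to that of Proposition~\ref{p:equal-size}, and your argument is a correct realisation of exactly that plan: you rightly isolate the one genuinely new ingredient, namely that Lemmas~\ref{l:root-ext-basic} and~\ref{l:root-ext-tree-vertex} remain valid for root extensions of tree-child networks, and your repair of Lemma~\ref{l:root-ext-basic}(ii) --- replacing the arbitrary child $w$ by a tree-child-guaranteed child $w^*$ of in-degree one, so that only (E1) is ever needed to derive the contradiction --- is precisely where the tree-child hypothesis must enter, consistent with the failure exhibited in Figure~\ref{fig:cut}. Your closed-form count $|E_\cN-E_{\cR}|=|T|-n_2+\varepsilon$, with $n_2$ read off from $\cD$ via the vertex correspondence, is a mild streamlining of the paper's four-case comparison of two embeddings; both that count and the equivalent rerun of the bookkeeping from Proposition~\ref{p:equal-size} go through once the two lemmas are available.
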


\section{Measures of dissimilarity between two tree-child networks}\label{sec:measures}

In this short section, we formally define the tree-child SNPR distance between two tree-child networks and a measure that is associated with an extension of a phylogenetic digraph common to two tree-child networks. This measure bounds the SNPR distance between two tree-child networks from above and below.

Let $\cN$ and $\cN'$ be two tree-child networks on $X$. Let $\cD=\{D_\rho,D_1, D_2, \ldots, D_k\}$ be a collection of leaf-labelled acyclic digraphs. If $\cD$ is a phylogenetic digraph of $\cN$ and $\cN'$, we refer to $\cD$ as an {\it agreement digraph} of $\cN$ and $\cN'$. Now let $\cD$ be an agreement digraph for $\cN$ and $\cN'$. If $\cD$ is tree-child, we say that $\cD$ is an {\it  agreement tree-child  digraph} for $\cN$ and $\cN'$. In the remainder of this paper, we are particularly interested in agreement digraphs of $\cN$ and $\cN'$ whose cut size is minimum. To this end, let $\cD$ be an agreement tree-child digraph for $\cN$ and $\cN'$, and let $c_\cD$ and~$c'_\cD$ be the cut size of $\cD$ in $\cN$ and $\cN'$, respectively. 
Then $\cD$ is called a {\it maximum agreement tree-child  digraph} for $\cN$ and $\cN'$ if the sum $c_\cD+ c'_\cD$  is minimum over all agreement tree-child digraphs for $\cN$ and $\cN'$, in which case $m_\tc(\cN,\cN')$ denotes this minimum number.
To calculate $m_\tc(\cN,\cN')$, it follows from Proposition~\ref{p:equal-size} that it is sufficient to consider a single  extension of each agreement digraph for $\cN$ and $\cN'$. 
Referring back to Figure~\ref{fig:example}, observe that each of the three phylogenetic digraphs $\cD_1$, $\cD_2$, and $\cD_3$ is in fact an agreement digraph for the two tree-child networks $\cN$ and $\cN'$ that are shown in the same figure.

Let $\cN$ and $\cN'$ be two tree-child networks,  let $\cD$ be an agreement tree-child digraph for $\cN$ and $\cN'$, and let $\cR$ and $\cR'$  be an extension of  $\cD$ in $\cN$ and $\cN'$, respectively. We note that, similar to the elements of an agreement forest,  the elements in $\cD$ can be embedded in $\cN$ and $\cN'$. Intuitively, they can be thought of as subnetworks that are common to $\cN$ and $\cN'$. On the other hand, the digraphs induced by the edges in $E_\cR-E_\cM$ and $E_{\cR'}-E_{\cM'}$, where $\cM$ and $\cM'$ is the embedding that underlies $\cR$ and $\cR'$, respectively, are not necessarily the same. Although each connected component in such a digraph is a rooted tree whose (unique) root is a vertex of $\cM$ and $\cM'$, respectively, and whose edges are directed {\it towards} the root, one digraph may contain directed rooted trees with a small total number of unlabelled leaves and the other one may contain directed rooted trees with a much larger total number of unlabelled leaves.

Now, let $\cN$ be a phylogenetic network on $X$, and let $e=(u, v)$ be an edge in $\cN$. We consider the following three operations applied to $\cN$:
 
\begin{enumerate}[]
\item {\bf SNPR$^\pm$} If $u$ is a tree vertex, then delete $e$, suppress $u$, subdivide an edge that is not a descendant of $v$ with a new vertex $u'$, and add the new edge $(u', v)$.
\item {\bf SNPR$^-$} If $u$ is a tree vertex and $v$ is a reticulation, then delete $e$, and suppress $u$ and $v$.
\item{\bf SNPR$^+$} Subdivide $e$ with a new vertex $v'$, subdivide an edge in the resulting network that is not a descendant of $v'$ with a new vertex $u'$, and add the new edge $(u', v')$.
\end{enumerate}

\noindent By definition of a tree vertex, $u\ne\rho$ if we apply an SNPR$^\pm$.  If it is not important which of  SNPR$^-$, SNPR$^+$, and SNPR$^\pm$ has been applied to $\cN$ we simply refer to it as an {\it $\SNPR$}. As observed by Bordewich et al.~\cite[Proposition 3.1]{bordewich17}, the operation is {\it reversible}, i.e. if $\cN'$ is a phylogenetic network on $X$ that can be obtained from~$\cN$ by a single SNPR, then $\cN$ can also be obtained from $\cN'$ by a single SNPR. Lastly, we note that the well-known rSPR operation is an application of SNPR$^\pm$ to a phylogenetic tree.

Let $\cN$ and $\cN'$ be two phylogenetic networks on $X$. An {\it $\SNPR$ sequence} $\sigma$ for $\cN$ and $\cN'$ is a sequence $$\sigma=(\cN=\cN_0, \cN_1, \cN_2, \ldots, \cN_t=\cN')$$ of phylogenetic networks on $X$ such that, for all $i\in \{1, 2, \ldots, t\}$, we have $\cN_i$ is obtained from $\cN_{i-1}$ by a single $\SNPR$ in which case, we say that $\sigma$ {\em connects} $\cN$ and~$\cN'$. We refer to $t$ as the {\it length} of $\sigma$. Let $t^\pm$ be the number of phylogenetic networks in $\{\cN_1,\cN_2,\ldots,\cN_t\}$ that have been obtained by an SNPR$^\pm$  and, similarly, let $t^-$ and $t^+$ be the number of phylogenetic networks in $\{\cN_1,\cN_2,\ldots,\cN_t\}$ that have been obtained by an SNPR$^-$ and SNPR$^+$, respectively. Clearly, $t^\pm+t^-+t^+=t$. We set $$w(\sigma)=2t^\pm+t^-+t^+$$ and refer to $w(\sigma)$ as the {\it weight} of $\sigma$. Intuitively, each deletion or addition of an edge contributes one to $w(\sigma)$.\ Thus, since an SNPR$^\pm$ deletes an edge and, subsequently adds a new edge, such an operation adds two to $w(\sigma)$. It was shown by Bordewich~et~al.~\cite[Proposition 3.2]{bordewich17} that any two phylogenetic networks $\cN$ and~$\cN'$ on the same leaf set are connected by an $\SNPR$ sequence. If $\cN$ and $\cN'$ are tree-child, then, by the same proposition, there also exists an $\SNPR$ sequence that connects $\cN$ and $\cN'$ such that each network in the sequence is tree-child. We refer to such an SNPR sequence as a {\it tree-child $\SNPR$ sequence}. Moreover, we define 
the  {\em tree-child $\SNPR$ distance} $d_\tc(\cN,\cN')$ between two tree-child networks $\cN$ and $\cN'$ as the minimum weight of any tree-child $\SNPR$ sequence connecting $\cN$ and $\cN'$. If $\cN$ and $\cN'$ are two phylogenetic $X$-trees, then $d_\rSPR(\cN,\cN')$ denotes the minimum number of rSPR operations needed to transform $\cN$ into $\cN'$.\\

\noindent{\bf Global assumption.} Let $\sigma=(\cN_0, \cN_1, \cN_2, \ldots, \cN_t)$  be an SNPR sequence that connects the two phylogenetic networks $\cN_0$ and $\cN_t$ on $X$. Throughout the remainder of the paper, we assume that there exists no $i\in\{1,2,\ldots,t\}$, such that $\cN_i$ can be obtained from $\cN_{i-1}$ by an SNPR$^\pm$ that deletes a reticulation edge in $\cN_{i-1}$. Indeed, if such an $i$ exists, then there exists an SNPR sequence $$\sigma'=(\cN_0,\cN_1,\cN_2,\ldots,\cN_{i-1},\cN_{i}',\cN_i,\cN_{i+1},\ldots,\cN_t)$$ such that $\cN_{i}'$ can be obtained from $\cN_{i-1}$ by an SNPR$^-$ and $\cN_{i}$ can be obtained from $\cN_{i}'$ by an SNPR$^+$.  Since we are interested in SNPR sequences of minimum weight and  $w(\sigma')=w(\sigma)$, no generality is lost.  

\section{Bounding the tree-child SNPR distance}\label{sec:bound}

In this section, we establish Theorem~\ref{t:main} and show that the bounds are tight.
Let $\cN$ and $\cN’$ be two tree-child networks, and let $\cR$ be an extension of a tree-child digraph $\cD$ of $\cN$. We first show that, if $\cN$, $\cN’$, and $\cR$ satisfy certain properties, then there exists an extension of $\cD$ in $\cN'$ such that the cut sizes of $\cD$ in $\cN$ and $\cN’$ differ by at most one.

\begin{lemma}\label{l:mod-ext}
Let $\cN$ and $\cN'$ be two tree-child networks on $X$,  let $\cD$ be a tree-child digraph of $\cN$, and let $\cR$ be an extension of $\cD$ in $\cN$. 
\begin{enumerate}[{\rm (i)}]
\item  Let $(u,v)$ be a reticulation edge of $\cN$ such that $(u,v)\notin\cR$. If $\cN'$ can be obtained from $\cN$ by an {\em SNPR$^-$} that deletes $(u,v)$ and suppresses $u$ and $v$, then $\cD$ is a tree-child digraph of $\cN'$  and there exists an extension $\cR'$ of $\cD$ in~$\cN'$ such that $$|E_{\cN}-E_{\cR}|-1=|E_{\cN'}-E_{\cR'}|.$$
\item Let  $e=(u,w)$ and $e'=(u',w')$ be two distinct tree edges of $\cN$. If $\cN'$ can be obtained from $\cN$ by an {\em SNPR$^+$} that subdivides $e$ and $e'$ with a new vertex $v$ and $v'$, respectively, and adds the new reticulation edge $(v',v)$, then $\cD$ is a tree-child digraph of $\cN'$  and there exists an extension $\cR'$ of $\cD$ in $\cN'$ such that $$|E_{\cN}-E_{\cR}|+1=|E_{\cN'}-E_{\cR'}|.$$
\item Let $e=(u,v)$ be a tree edge of $\cN$ such that $e\notin\cR$, and let $f=(p_{u'},c_{u'})$ be an edge in $\cN$ that is distinct from $e$. If $\cN'$ can be obtained from $\cN$ by an {\em SNPR$^\pm$} that deletes $e$, suppresses $u$, subdivides $f$ with a new vertex $u'$, and adds the new edge $(u',v)$, then $\cD$ is a tree-child digraph of $\cN'$  and there exists an extension $\cR'$ of $\cD$ in $\cN'$ such that $$|E_{\cN}-E_{\cR}|=|E_{\cN'}-E_{\cR'}|.$$
\end{enumerate}
\end{lemma}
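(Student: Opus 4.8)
The plan is to handle the three parts separately, in each case tracking how the edge set of $\cN$ and the edge set of the extension $\cR$ change under the prescribed SNPR operation, and exhibiting an explicit extension $\cR'$ of $\cD$ in $\cN'$. For each part I would first argue that $\cD$ is still a tree-child digraph of $\cN'$: the leaf-set partition condition (i) and the root condition (ii) are unaffected since the operations do not touch the leaves or the root edge, so the only thing to verify is condition (iii), namely that $\cD$ has an embedding in $\cN'$. For this I would modify the given embedding $\cM$ (underlying $\cR$) in the obvious way — in part~(i), since $(u,v)\notin\cR$ and hence $(u,v)\notin\cM$, the reticulation edge being deleted is not used by $\cM$, so after deleting $e$ and suppressing $u$ and $v$ the images of the components of $\cD$ survive essentially intact (a degree-two vertex suppression may need to be absorbed into a path of $\cM$); in part~(ii), subdividing edges only lengthens internal paths of the embedding, so $\cM$ remains an embedding after relabelling the subdivision vertices; in part~(iii), since $e=(u,v)\notin\cR$ and hence $e\notin\cM$, the same reasoning as in~(i) applies, with the added subdivision of $f$ handled as in~(ii). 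Tree-childness of the resulting embedding follows because none of these modifications creates a reticulation whose only children are reticulations — this is exactly where Lemma~\ref{l:reticulation-edge} (for part~(i)) and the structure of SNPR$^+$/SNPR$^\pm$ on tree edges is used.

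For the cut-size bookkeeping, I would count edges carefully. In part~(i): deleting $e=(u,v)$ and suppressing $u$ (merging its remaining in- and out-edge) and $v$ (merging its remaining in-edge with its out-edge) removes three edges of $\cN$ and introduces two new edges of $\cN'$, a net change of $-1$ in $|E_{\cN'}|$ versus $|E_\cN|$. Since $e\notin\cR$, to build $\cR'$ I would take the natural extension obtained from the modified embedding $\cM'$; the two new edges of $\cN'$ (the suppressions) are either forced into $\cR'$ or forced out of it in a way mirroring the status of the three deleted edges, and by a direct case check all but one of the three deleted edges were themselves outside $\cR$, yielding $|E_{\cN'}-E_{\cR'}| = |E_\cN-E_\cR|-1$. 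Here I would lean on Lemma~\ref{l:root-ext-basic} to know that every vertex of $\cN'$ lies in $\cR'$, and on Lemma~\ref{l:root-ext-tree-vertex} to pin down which outgoing edge of the suppressed-path tree vertex is taken. Part~(ii) is the reverse count: SNPR$^+$ subdivides two tree edges (each $+1$ edge) and adds one new reticulation edge $(v',v)$ ($+1$ edge), net $+2$ edges in $\cN'$ over $\cN$; extending $\cM$ to $\cR'$, exactly one of the two subdivided half-edges at each subdivision vertex is retained (Lemma~\ref{l:root-ext-tree-vertex}(ii) applied to $v$ and $v'$, which are new tree vertices not in $\cM$), and the new reticulation edge is forced into $\cR'$ by Lemma~\ref{l:root-ext-basic}(i), so the net change in the cut size is $+1$. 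Part~(iii) combines the deletion count of part~(i)-style analysis with the subdivision count of part~(ii): deleting $e$ and suppressing $u$ removes two edges and adds one, subdividing $f$ adds one, for no net change in $|E_{\cN'}|$; and since $e\notin\cR$, the corresponding tracking shows the cut size is preserved exactly.

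The main obstacle I anticipate is the careful case analysis in part~(i) (and correspondingly in part~(iii)) of exactly which of the three deleted edges around the reticulation $v$ and its parent $u$ were in $\cR$: one must separate the cases according to whether the other parent of $v$, and the other child of $u$, lie in the same component of $\cR$ as $v$, and whether the extension $\cR$ had extended past $u$ or past $v$ via an (E1)/(E2) step. A cleaner route around this obstacle is to first invoke Lemma~\ref{l:extension-equiv} to replace $\cR$ by a \emph{root} extension of the same cut size $c_\cD$, so that no reticulation edge of $\cN$ lies in $\cR\setminus\cM$; then the reticulation edge $(u,v)$ being deleted is genuinely outside the embedding and the local picture around $u$ and $v$ is as constrained as possible, reducing the number of subcases. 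I would therefore begin each part by passing to a root extension via Lemma~\ref{l:extension-equiv}, which is legitimate since only the cut size $c_\cD$ (not the particular $\cR$) matters for the inequalities claimed, and then carry out the explicit construction and count. The remaining parts of the argument — checking that the constructed $\cR'$ really is an extension of $\cD$ (i.e.\ that it arises from $\cM'$ by a legal sequence of (E1)/(E2) steps and that no further step is possible) and that it is tree-child — are routine given the earlier lemmas of Section~\ref{sec:prop}.
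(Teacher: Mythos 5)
Your overall strategy---modify the embedding and the extension locally and track the cut size---is the same as the paper's, and parts (i) and (iii) would go through along the lines you sketch once the bookkeeping is done carefully. But the reasoning you actually give for part (ii) is wrong at its central point. You claim that the new reticulation edge $(v',v)$ is ``forced into $\cR'$ by Lemma~\ref{l:root-ext-basic}(i)'' and that Lemma~\ref{l:root-ext-tree-vertex}(ii) applies to $v$ and $v'$ as new tree vertices. Neither holds: in $\cN'$ the vertex $v$ is a reticulation (in-degree two, from $u$ and from $v'$), so Lemma~\ref{l:root-ext-tree-vertex}(ii) does not apply to it, and Lemma~\ref{l:root-ext-basic}(i) only guarantees that each of $v$ and $v'$ has \emph{some} outgoing edge in $\cR'$, which is already supplied by $(v,w)$ and $(v',w')$. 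In the correct construction $(v',v)$ is \emph{not} in $\cR'$; it is precisely the one new uncovered edge, and that is where the $+1$ comes from. If $(v',v)$ were in $\cR'$ together with $(v',w')$, then $v'$ would have out-degree two in $\cR'$ but not in the underlying embedding, which extensions forbid, and the cut size would come out unchanged rather than increased by one. You only land on $+1$ because of a compensating arithmetic slip: an SNPR$^+$ adds three edges to the network (two subdivisions plus the new reticulation edge), not two, and likewise in part (i) an SNPR$^-$ reduces the edge count by three, not one.

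Two further points. Your claim in part (i) that ``all but one of the three deleted edges were themselves outside $\cR$'' is not true in general: both $(p_u,u)$ and the other reticulation edge into $v$ may lie in $\cR$. The correct observation is that the cut contribution of each of these two edges transfers unchanged to the corresponding suppressed edge of $\cN'$, while $(u,v)$, which was in the cut, simply disappears. Finally, your proposed shortcut of first passing to a root extension via Lemma~\ref{l:extension-equiv} is harmless for part (i) but can destroy the hypothesis of part (iii): the construction in Lemma~\ref{l:extension-equiv} adds tree edges (the sibling edge $(u,v')$ of each discarded reticulation edge) to the extension, so a tree edge $e\notin\cR$ need not remain outside the resulting root extension, and the construction of $\cR'$ in part (iii) depends on $e$ being uncovered.
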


\begin{figure}[t]
\centering
\scalebox{0.87}{\input{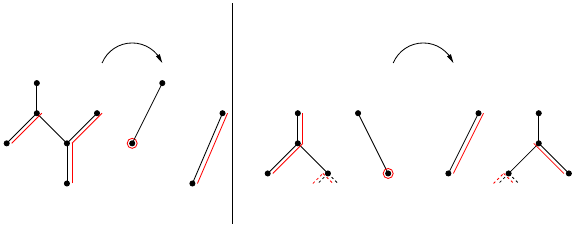_t}}
\caption{Setup of parts (i) and (iii) in the proof of Lemma~\ref{l:mod-ext}. Red lines indicate vertices and edges in $\cR$ and $\cR'$. In (i), observe that $u$ has in-degree zero and out-degree one, and $v$ has in-degree one and out-degree one in $\cR$ whereas, in (iii), observe that $u$ has in-degree one and out-degree one, $(p_{u'},c_{u'})\notin\cR$, and $v$ may or may not be a leaf in $\cN$. Vertex labels $y_1$, $y_2$, $y_3$, and $y_4$ are only used to clarify the figure.}
\label{fig:lemma-6-1}
\end{figure}

\begin{proof}
The setup used to establish parts (i) and (iii) is illustrated in Figure~\ref{fig:lemma-6-1}. We first establish (i). Since $(u,v)\notin\cR$ and $\cD$ is a tree-child digraph of $\cN$, it follows that  $\cD$ is also such a digraph of $\cN'$. Now, let $\cR'$ be the digraph obtained from $\cR$ by applying the following operation to each vertex $w\in\{u,v\}$. If $w$ has in-degree zero  and out-degree one in $\cR$, then delete $w$ and, if $w$ has in-degree one and out-degree one in $\cR$, then suppress $w$.  As $(u,v)\not\in \cR$, it follows by Lemma~\ref{l:root-ext-basic} that each of $u$ and $v$ has either in-degree zero and out-degree one, or in-degree one and out-degree one in $\cR$. Thus, $\cR'$ is well defined. It now follows from the construction that $\cR'$ is an extension of $\cD$ in $\cN'$ with $|E_{\cN}-E_{\cR}|-1=|E_{\cN'}-E_{\cR'}|$.

To see that (ii) holds, observe first that since $\cD$ is a tree-child digraph of $\cN$, it follows from the construction of $\cN'$ from $\cN$ that $\cD$ is also such a digraph of $\cN'$. Reversing the construction of $\cR'$ in (i), let $\cR'$ be the digraph obtained from $\cR$ by applying the following operations. If $e\notin\cR$, then add $(v,w)$ and, if $e\in\cR$, then subdivide $e$ with $v$. Similarly, if $e'\notin\cR$, then add $(v',w')$ and, if $e'\in\cR$, then subdivide $e'$ with $v'$.  It is now straightforward to check that $\cR'$ is an extension of $\cD$ in $\cN'$ with $|E_{\cN}-E_{\cR}|+1=|E_{\cN'}-E_{\cR'}|$.

We now turn to (iii). Let $p_u$ be the parent of $u$, and let $c_u$ be the child of~$u$ that is not $v$. Since $u$ is a tree vertex, $p_u$ and $c_u$ are well defined. Furthermore, let $\cM$ be the embedding of $\cD$ in $\cN$ that underlies $\cR$. By Lemma~\ref{l:root-ext-basic}(i), \mbox{$(u,c_u)\in\cR$}. \mbox{If $(p_u,u)\notin\cR$}, then the degree constraints of the vertices in $\cD$ imply that \mbox{$(u,c_u)\notin\cM$}. On the other hand, if $(p_u,u)\in\cR$, then $(u,c_u)\in\cR$ and, in turn, either both of $(p_u,u)$ and $(u,c_u)$ are in $\cM$ or neither.
Now obtain $\cM'$ from $\cM$ by replacing $(p_u,u)$ and $(u,c_u)$ with $(p_u,c_u)$ if  $(p_u,u)$ and $(u,c_u)$ are in $\cM$, and replacing $f$ with $(p_{u'},u')$ and $(u',c_{u'})$ if  $f\in\cM$.
Thus, as $\cM$ is the embedding of $\cD$ in~$\cN$ that underlies $\cR$, it  follows from the construction of $\cN'$ from $\cN$ that $\cM'$ is also an embedding of $\cD$ in $\cN'$. Hence, $\cD$ is a tree-child digraph of $\cN'$. To complete the proof, let $\cR'$ be the digraph obtained from $\cR$ by applying the following operations. If $u$ has in-degree zero in $\cR$, then delete $u$ and, if $u$ has in-degree one in $\cR$, then suppress $u$. Moreover, if $f\notin \cR$, then add $(u',c_{u'})$ and, if $f\in \cR$, then subdivide $(p_{u'},c_{u'})$ with $u'$. Again, by Lemma~\ref{l:root-ext-basic}, $u$ has either in-degree zero and out-degree one, or in-degree one and out-degree one in $\cR$ and, so, $\cR'$ is well defined. As $\cR$ is an extension of $\cD$ in $\cN$ with $e\notin\cR$ and $(u',v)\notin\cR'$, it now follows that $\cR'$ is an extension of $\cD$ in $\cN'$ with $|E_{\cN}-E_{\cR}|=|E_{\cN'}-E_{\cR'}|$.
\end{proof}

The next lemma and corollary show that there always exists a  tree-child $\SNPR$ sequence connecting two tree-child networks with certain desirable properties.\ These properties will be leveraged later to establish one of the two inequalities of Theorem~\ref{t:main}. 

\begin{lemma}\label{l:swap}
Let $(\cN_0,\cN_1,\cN_2,\ldots,\cN_t)$ be a tree-child $\SNPR$ sequence connecting two tree-child networks $\cN_0$ and $\cN_t$ on $X$. If there exists an $i\in\{0,1,2,\ldots,t-2\}$ such that $\cN_{i+1}$ is obtained from $\cN_i$ by an $\SNPR^+$ (resp.\ $\SNPR^\pm$) and $\cN_{i+2}$ is obtained from $\cN_{i+1}$ by an $\SNPR^-$, then one of the following holds:
\begin{enumerate}[{\rm (i)}]
\item $(\cN_0,\cN_1,\cN_2,\ldots,\cN_i,\cN_{i+3},\cN_{i+4},\ldots,\cN_{t-1},\cN_t)$ is a tree-child $\SNPR$ sequence connecting $\cN_0$ and $\cN_t$ of length $t-2$, 
\item $(\cN_0,\cN_1,\cN_2,\ldots,\cN_i,\cN_{i+2},\cN_{i+3},\ldots,\cN_{t-1},\cN_t)$ is a tree-child $\SNPR$ sequence connecting $\cN_0$ and $\cN_t$ of length $t-1$ such that $\cN_{i+2}$ is obtained from $\cN_{i}$ by an $\SNPR^\pm$, 
\item $(\cN_0,\cN_1,\cN_2,\ldots,\cN_i,\cN_{i+2},\cN_{i+3},\ldots,\cN_{t-1},\cN_t)$ is a tree-child $\SNPR$ sequence connecting $\cN_0$ and $\cN_t$ of length $t-1$ such that $\cN_{i+2}$ is obtained from $\cN_{i}$ by an $\SNPR^-$, or
\item there exists a tree-child $\SNPR$ sequence  $$(\cN_0,\cN_1,\cN_2,\ldots,\cN_i,\cN_{i+1}',\cN_{i+2},\cN_{i+3},\ldots,\cN_{t-1},\cN_t)$$ connecting $\cN_0$ and $\cN_t$ of length $t$ such that $\cN_{i+1}'$ is obtained from $\cN_i$ by an $\SNPR^-$ and $\cN_{i+2}$ is obtained from $\cN_{i+1}'$  by an $\SNPR^+$ (resp.\ $\SNPR^\pm$).
\end{enumerate}
\end{lemma}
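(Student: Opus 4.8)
The plan is to analyse the local situation around the consecutive pair of operations $\cN_i \to \cN_{i+1} \to \cN_{i+2}$, where the first is an $\SNPR^+$ (resp.\ $\SNPR^\pm$) that adds a reticulation edge, say $g$, and the second is an $\SNPR^-$ that deletes a reticulation edge, say $h$, and suppresses its endpoints. Everything hinges on how $g$ and $h$ relate, so I would split into cases according to whether $h = g$ and, more generally, whether the deletion of $h$ interacts with the vertices $v, v'$ (the endpoints of $g$) freshly created by the first operation.

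\emph{Case $h = g$.} Here the $\SNPR^-$ simply undoes the reticulation edge that the previous step added. If the first operation was an $\SNPR^+$, then $\cN_{i+2} \cong \cN_i$ and we land in conclusion~(i): we delete both $\cN_{i+1}$ and $\cN_{i+2}$ from the sequence, obtaining a tree-child $\SNPR$ sequence of length $t-2$ (one must check the remaining sequence is still tree-child and still connects $\cN_0$ to $\cN_t$, which is immediate since only the two removed networks changed). If the first operation was an $\SNPR^\pm$, then the $\SNPR^\pm$ deleted an edge $e$, suppressed its tail, subdivided another edge with a vertex that then became an endpoint of $g$; deleting $g$ and suppressing its endpoints leaves a network obtainable from $\cN_i$ by deleting $e$ and suppressing its tail, i.e.\ half of an $\SNPR^\pm$. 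By the \textbf{Global assumption}, $e$ is not a reticulation edge, so this ``half move'' is genuinely the regraft part; the composite $\cN_i \to \cN_{i+2}$ is then realisable as a single $\SNPR^\pm$ (regraft $v$ somewhere, namely back to where $\cN_{i+2}$ actually has it) — this is conclusion~(ii). I would need a small sub-argument that $\cN_{i+2}$ is tree-child, which follows from Lemma~\ref{l:reticulation-edge}.

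\emph{Case $h \ne g$.} Now $h$ is a reticulation edge of $\cN_i$ (possibly modified by subdivision) that survives into $\cN_{i+1}$, and $g$ survives into $\cN_{i+2}$. The two operations act on disjoint or only-incidentally-overlapping parts of the network, so the natural move is to \emph{commute} them: first delete $h$ (an $\SNPR^-$ applicable already to $\cN_i$, or to a subdivision thereof), then add $g$. The subtlety is that subdividing $e$ and $e'$ in the $\SNPR^+$ step, or deleting $e$ in the $\SNPR^\pm$ step, may have been what made $h$'s endpoints suppressible in the right way, or conversely deleting $h$ first may render the edge $e$ (or $e'$) no longer present — e.g.\ if an endpoint of $h$ lies on $e$ or $e'$. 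When the commutation goes through cleanly we get conclusion~(iv): an $\SNPR^-$ from $\cN_i$ to a new $\cN_{i+1}'$, then an $\SNPR^+$ (resp.\ $\SNPR^\pm$) from $\cN_{i+1}'$ to $\cN_{i+2}$, length unchanged at $t$. When it does not — typically because the suppression caused by deleting $h$ merges edges so that what were two operations collapse into fewer — one checks that $\cN_{i+2}$ is reachable from $\cN_i$ by a single $\SNPR^-$ (conclusion~(iii)) or, if the $\SNPR^+$'s new edge is still needed, by a single $\SNPR^\pm$ (conclusion~(ii)), or by nothing at all when both operations cancel (conclusion~(i)). Throughout, tree-childness of every intermediate network must be verified; for the edge-deletion steps this is exactly Lemma~\ref{l:reticulation-edge}, and for the edge-addition / regraft steps one checks directly that no stack, sibling-reticulation pair, or parallel pair is created, using that $\cN_{i+2}$ itself is tree-child as a hypothesis.

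The main obstacle I anticipate is the bookkeeping in the $h \ne g$ case: precisely cataloguing the ways in which the endpoints of $h$, the edge(s) $e$ and $e'$, and the fresh vertices $v, v'$ can coincide or be adjacent, and verifying in each configuration that one of the four prescribed outcomes results with the claimed length and with tree-childness preserved at every step. This is a finite but somewhat delicate case analysis, best organised by first fixing whether the tail or head of $h$ equals, or is a neighbour of, one of $u, u', v, v'$, and then reading off which of (i)–(iv) applies; drawing the local pictures (in the spirit of Figure~\ref{fig:lemma-6-1}) will be essential to keep the cases straight.
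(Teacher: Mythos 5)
Your high-level strategy is the right one and matches the paper's: examine how the edge added by the first operation and the reticulation edge deleted by the second interact, and then either cancel the pair (i), merge them into one operation (ii)/(iii), or commute them (iv). For the $\SNPR^+$ branch your ``$h=g$'' case correctly yields (i), and your observation that when $h\ne g$ the deleted edge is already a reticulation edge of $\cN_i$ (so the two moves commute, giving (iv)) is the paper's argument --- though note the paper needs tree-childness of $\cN_{i+1}$ to rule out $e'$ being incident with the freshly added reticulation, and it isolates the genuinely non-commuting subcase $v=v'$ (the two reticulation edges sharing their head), which is precisely where conclusion (ii) comes from; your sketch does not identify this trigger.

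There are two concrete gaps. First, your framing of the $\SNPR^\pm$ branch is wrong: under the paper's global assumption the edge deleted by an $\SNPR^\pm$ is a tree edge, so the newly added edge $(u',v)$ has a non-reticulation head and is itself a tree edge --- the operation does not ``add a reticulation edge $g$,'' and your case $h=g$ is vacuous there. The correct case split (which the paper carries out) is on whether the reticulation edge $e'$ deleted by the subsequent $\SNPR^-$ is one of the two edges of $\cN_{i+1}$ created by the suppression and subdivision, namely $(p_u,c_u)$ or $(u',c_{u'})$, or an edge already present in $\cN_i$; in the first two subcases $e'$ corresponds to a reticulation edge $(u,c_u)$ or $f$ of $\cN_i$ that can be deleted first, and in all three subcases one lands in (iv). Conclusion (iii) arises only when the $\SNPR^\pm$ regrafts to its original position (so $\cN_i\cong\cN_{i+1}$), not from the ``operations collapsing'' mechanism you describe. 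Second, and more fundamentally, the entire content of the lemma is the exhaustive verification that every configuration lands in one of (i)--(iv) with tree-childness preserved; you explicitly defer this (``a finite but somewhat delicate case analysis''), so what you have is a plan rather than a proof.
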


\begin{proof}
Suppose that there exists an element $i\in\{0,1,2,\ldots, t-2\}$ such that $\cN_{i+1}$ is obtained from $\cN_i$ by an SNPR$^+$  and $\cN_{i+2}$ is obtained from $\cN_{i+1}$ by an SNPR$^-$. Let $e=(u,v)$ be the reticulation edge of $\cN_{i+1}$ that is added in obtaining $\cN_{i+1}$ from~$\cN_i$. Furthermore, let $e'=(u',v')$ be the reticulation edge in $\cN_{i+1}$ that is deleted in obtaining $\cN_{i+2}$ from $\cN_{i+1}$. If $e=e'$, then $\cN_i\cong\cN_{i+2}$, and so (i) holds. We may therefore assume that $e\ne e'$. If $v=v'$, let $w$ be the child of $v$ in $\cN_{i+1}$, and let $c_u$ and $p_u$ be the child and parent, respectively, of $u$ in $\cN_{i+1}$ such that $c_u\ne v$. Since $u$ is a tree vertex, $c_u$ and $p_u$ are well defined. Observe that $(u',w)$ and $(p_u,c_u)$ are tree edges in $\cN_i$. It now follows that $\cN_{i+2}$ can be obtained from $\cN_i$ by the SNPR$^\pm$ that deletes $(u',w)$, suppresses $u'$, subdivides $(p_u,c_u)$ with a new vertex $u$, and adds the edge $(u,w)$, and so (ii) holds. So assume that  $e\ne e'$ and $v\ne v'$. As $\cN_{i+1}$ is tree-child, it follows that $e'$  is not incident with $u$ or $v$. Thus, $e'$ is a reticulation edge of $\cN_i$. Let $\cN_{i+1}'$ be the phylogenetic network obtained from~$\cN_{i}$ by deleting $e'$ and suppressing $u'$ and $v'$. By Lemma~\ref{l:reticulation-edge}, $\cN_{i+1}'$ is tree-child. It is now straightforward to check that $\cN_{i+2}$ can be obtained from $\cN_{i+1}'$ by an SNPR$^+$, and so (iv) holds. 

\begin{figure}[t]
\centering
\scalebox{0.87}{\input{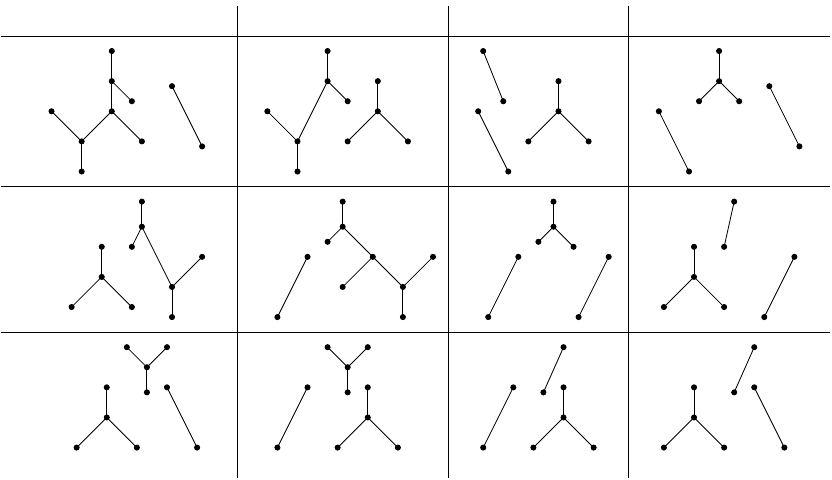_t}}
\caption{The three cases in the last case analysis in the proof of Lemma~\ref{l:swap}, where  $\cN_{i+1}$ is obtained from $\cN_i$ by an SNPR$^\pm$, $\cN_{i+2}$ is obtained from $\cN_{i+1}$ by an SNPR$^-$, and $p_{u'}\ne p_u$ or $c_{u'}\ne c_u$. The three cases are (I) $e'=(p_u,c_u)$, (II)  $e'=(u',c_{u'})$, and (III) $e'\ne (p_u,c_u)$ and $e'\ne (u',c_{u'})$. Vertex labels $y_1$, $y_2$, and $y_3$ are only used to clarify the figure.}
\label{fig:lemma-6-2}
\end{figure}

Next suppose that there exists an element $i\in\{0,1,2,\ldots, t-2\}$ such that $\cN_{i+1}$ is obtained from $\cN_i$ by an SNPR$^\pm$  and $\cN_{i+2}$ is obtained from $\cN_{i+1}$ by an SNPR$^-$. Let $e=(u,v)$ be the edge of $\cN_i$ that is deleted in the process of obtaining $\cN_{i+1}$. Furthermore, let $p_u$ be the parent of $u$ and let $c_u$ be the child of $u$ that is not $v$ in $\cN_i$. Since $u$ is a tree vertex, $p_u$ and $c_u$ are well defined. Let $f=(p_{u'},c_{u'})$ be the edge of the digraph resulting from $\cN_i$ by deleting $e$ and suppressing $u$ that is subdivided with a new vertex $u'$ such that $(u',v)$ is an edge in $\cN_{i+1}$. If $p_{u'}=p_u$ and $c_{u'}=c_u$, then $\cN_i\cong\cN_{i+1}$, and so (iii) holds. Hence, we may assume that $p_{u'}\ne p_u$ or $c_{u'}\ne c_u$, and so $(p_u,c_u)$ is an edge in $\cN_{i+1}$.  Let $e'$ be the edge that is deleted in obtaining $\cN_{i+2}$ from $\cN_{i+1}$. There are three cases to consider, which are illustrated in Figure~\ref{fig:lemma-6-2}. First assume that $e'=(p_u,c_u)$. Then $(u,c_u)$ is a reticulation edge in~$\cN_i$. Let $w$ be the parent of $c_u$ that is not $u$ in $\cN_i$. Obtain $\cN_{i+1}'$ by deleting $(u,c_u)$ and suppressing the two resulting degree-two vertices. By Lemma~\ref{l:reticulation-edge}, $\cN_{i+1}'$ is tree-child. Moreover, noting that $(p_u,v)$ is an edge in $\cN_{i+1}'$, it follows that $\cN_{i+2}$ can be obtained from $\cN_{i+1}'$ by an SNPR$^\pm$ that deletes $(p_u,v)$, suppresses $p_u$, subdivides $(p_{u'},c_{u'})$ if $c_u\ne p_{u'}$ (resp.\ subdivides $(w,c_{u'})$ if $c_u= p_{u'}$) with a new vertex $u'$, and adds the edge $(u',v)$, and so (iv) holds. Second assume that $e'=(u',c_{u'})$. Then~$f$ is a reticulation edge in $\cN_i$. Noting that $p_{u'}$ is a tree vertex in $\cN_i$, let $s$ be the parent of $p_{u'}$, and let $t$ be the child of $p_{u'}$ that is not $c_{u'}$. Now obtain $\cN_{i+1}'$ from $\cN_i$ by deleting $f$ and suppressing the two resulting degree-two vertices, one of which is $p_{u'}$. Again by Lemma~\ref{l:reticulation-edge}, $\cN_{i+1}'$ is tree-child. Furthermore, $(s,t)$ is an edge in~$\cN_{i+1}'$. Then obtain $\cN_{i+2}$ from $\cN_{i+1}'$ by an SNPR$^\pm$ that deletes $(u,v)$, suppresses $u$, subdivides $(s,t)$ with a new vertex $p_{u'}$, and adds the edge $(p_{u'},v)$. Again (iv) holds. Third assume that $e'\ne (p_u,c_u)$ and $e'\ne (u',c_{u'})$. Then $e'$ is  an edge of $\cN_i$. Let $\cN'_{i+1}$ be the tree-child network obtained from $\cN_i$ by deleting $e'$ and suppressing the two resulting degree-two vertices. Since $e$ and $f$ are edges of  $\cN_{i+1}'$, it now follows that $\cN_{i+2}$ can be obtained from $\cN_{i+1}'$ by an SNPR$^\pm$ that deletes $e$, suppresses $u$, subdivides $f$ with a new vertex $u'$, and adds the edge $(u',v)$. Again, (iv) holds, thereby completing the proof of the lemma.
\end{proof}

\begin{corollary}\label{c:nice-seq}
Let $\cN$ and $\cN'$ be two tree-child networks on $X$. Then there exists a tree-child $\SNPR$ sequence $(\cN=\cN_0,\cN_1,\cN_2,\ldots,\cN_t=\cN')$ that connects $\cN$ and~$\cN'$ such that either $\cN_t$ is obtained from $\cN_{t-1}$ by an $\SNPR^+$ or $\SNPR^\pm$, or $\cN_{i}$ is obtained from $\cN_{i-1}$ by an $\SNPR^-$  for each $i\in\{1,2,\ldots, t\}$.
\end{corollary}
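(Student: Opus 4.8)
The plan is to deduce Corollary~\ref{c:nice-seq} from Lemma~\ref{l:swap} by a minimal-counterexample argument. By Bordewich et al.~\cite[Proposition 3.2]{bordewich17} there is at least one tree-child $\SNPR$ sequence connecting $\cN$ and $\cN'$. For such a sequence $\sigma=(\cN=\cN_0,\cN_1,\ldots,\cN_t=\cN')$, let $I^-(\sigma)$ be the set of indices $i\in\{1,\ldots,t\}$ for which $\cN_i$ is obtained from $\cN_{i-1}$ by an $\SNPR^-$, and set $\mu(\sigma)=(t,\sum_{i\in I^-(\sigma)}i)$, ordered lexicographically. Since the image of $\mu$ lies in the well-ordered set of pairs of non-negative integers under the lexicographic order, I would fix a tree-child $\SNPR$ sequence $\sigma$ connecting $\cN$ and $\cN'$ with $\mu(\sigma)$ minimum.

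First I would show that such a minimal $\sigma$ has no index $i\in\{0,1,\ldots,t-2\}$ for which $\cN_{i+1}$ is obtained from $\cN_i$ by an $\SNPR^+$ or an $\SNPR^\pm$ and $\cN_{i+2}$ is obtained from $\cN_{i+1}$ by an $\SNPR^-$. Suppose such an $i$ exists and apply Lemma~\ref{l:swap}. Under outcomes~(i), (ii), or~(iii) one obtains a tree-child $\SNPR$ sequence connecting $\cN$ and $\cN'$ of length $t-2$ or $t-1$, so its potential is strictly smaller in the first coordinate, contradicting minimality. Under outcome~(iv) one obtains a tree-child $\SNPR$ sequence of the same length $t$ that agrees with $\sigma$ at every operation except the two producing the networks in positions $i+1$ and $i+2$: the operation at position $i+1$ becomes an $\SNPR^-$ and the operation at position $i+2$ becomes an $\SNPR^+$ or an $\SNPR^\pm$. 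Hence the set of $\SNPR^-$-positions of the new sequence is obtained from $I^-(\sigma)$ by replacing $i+2$ with $i+1$, so its potential is strictly smaller in the second coordinate, again contradicting minimality. (One checks along the way that the sequences produced by Lemma~\ref{l:swap} still satisfy our global assumption, since every $\SNPR^\pm$ they introduce deletes an edge whose head is not a reticulation and hence a tree edge.)

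It then remains to read off the stated dichotomy from this structural property of $\sigma$. If $\cN_t$ is obtained from $\cN_{t-1}$ by an $\SNPR^+$ or an $\SNPR^\pm$, the first alternative of the corollary holds. Otherwise $\cN_t$ is obtained from $\cN_{t-1}$ by an $\SNPR^-$, and a downward induction on $j$ shows that $\cN_j$ is obtained from $\cN_{j-1}$ by an $\SNPR^-$ for every $j\in\{1,\ldots,t\}$: whenever this holds for some $j\geq 2$, the absence of the forbidden pattern at index $j-2$ forces the operation producing $\cN_{j-1}$ from $\cN_{j-2}$ to be neither an $\SNPR^+$ nor an $\SNPR^\pm$, hence an $\SNPR^-$. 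This yields the second alternative and completes the proof.

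The step I expect to be the main obstacle is termination, because outcome~(iv) of Lemma~\ref{l:swap} does not shorten the sequence. It is handled by the second coordinate $\sum_{i\in I^-(\sigma)}i$ of the potential together with the observation that the swap in outcome~(iv) moves exactly one $\SNPR^-$ operation one step earlier in the sequence and leaves the position and type of every other operation unchanged; this makes $\mu$ strictly decrease in all four outcomes, so the minimal-counterexample argument closes.
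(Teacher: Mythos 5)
Your proposal is correct and takes essentially the same approach as the paper, whose entire proof is the single sentence that the corollary ``follows from repeated applications of Lemma~\ref{l:swap}.'' Your lexicographic potential $\mu(\sigma)=(t,\sum_{i\in I^-(\sigma)}i)$ supplies exactly the termination argument the paper leaves implicit, and your handling of outcome~(iv) and of the global assumption is sound.
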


\begin{proof}
The corollary follows from repeated applications of Lemma~\ref{l:swap}.
\end{proof}

The proof of Theorem~\ref{t:main} is an amalgamation of the next two lemmas. 

\begin{lemma}\label{l:dtc<mtc}
Let $\cN$ and $\cN'$ be two tree-child networks on $X$. Then $$d_{\tc}(\cN,\cN’) \leq m_{\tc}(\cN,\cN’).$$
\end{lemma}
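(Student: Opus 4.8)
The plan is to argue by induction on $m := m_\tc(\cN,\cN')$, fixing an agreement tree-child digraph $\cD = \{D_\rho, D_1,\ldots,D_k\}$ of $\cN$ and $\cN'$ that attains this minimum, together with extensions $\cR$ of $\cD$ in $\cN$ and $\cR'$ of $\cD$ in $\cN'$, and writing $c := c_\cD$ and $c' := c'_\cD$ for the two cut sizes, so $c+c' = m$. For the base case $m=0$ one has $c = c' = 0$, whence $E_\cR = E_\cN$ and $E_{\cR'} = E_{\cN'}$; since $\cN$ is connected and $\cR$ has exactly $k+1$ components this forces $k=0$ and $\cN \cong D_\rho \cong \cN'$, so $d_\tc(\cN,\cN') = 0$. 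For the inductive step I would split into three cases depending on whether a reticulation edge can be found outside $\cR$ or outside $\cR'$; since $d_\tc$ and $m_\tc$ are symmetric in their two arguments (SNPR is reversible), it suffices to treat the side on which a reticulation cut edge occurs.

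\emph{Reticulation cut edge in $\cN$.} Suppose some reticulation edge $e=(u,v)$ of $\cN$ is not in $\cR$. Then $u$ is a tree vertex and, crucially, $u \ne \rho$: in a tree-child network the root's unique child is a tree vertex or a leaf and hence never a reticulation, so $e$ is always a legal target for an SNPR$^-$. Deleting $e$ and suppressing $u,v$ yields a network $\cN_1$, tree-child by Lemma~\ref{l:reticulation-edge}, and by Lemma~\ref{l:mod-ext}(i) $\cD$ is an agreement tree-child digraph of $\cN_1$ and $\cN'$ with cut size $c-1$ in $\cN_1$, so $m_\tc(\cN_1,\cN') \le m-1$. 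As an SNPR$^-$ has weight $1$, the induction hypothesis gives $d_\tc(\cN,\cN') \le 1 + d_\tc(\cN_1,\cN') \le 1 + (m-1) = m$. The case of a reticulation cut edge in $\cN'$ is the mirror image: Lemma~\ref{l:mod-ext}(i) applied to $\cN'$ produces a tree-child $\cN'_1$ from which $\cN'$ is recovered by a single (weight-$1$) SNPR$^+$, with $\cD$ an agreement tree-child digraph of $\cN$ and $\cN'_1$ of total cut size $m-1$, and the same bound follows.

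\emph{All cut edges are tree edges, and $c \ge 1$.} This is the substantive case. When neither $\cR$ nor $\cR'$ omits a reticulation edge, every edge of $E_\cN \setminus E_\cR$ and of $E_{\cN'} \setminus E_{\cR'}$ is a tree edge; a structural analysis in the spirit of Section~\ref{sec:prop} then shows that in each of $\cN$ and $\cN'$ every component of $\cD$ other than $D_\rho$ is joined to the remaining structure through a single tree cut edge entering the top of its root path, and that one can choose a component $D_i$ whose attachment point in $\cN'$ lies along an embedded edge of some other component $D_j$ rather than on a root path. For such a $D_i$ I would apply the SNPR$^\pm$ of Lemma~\ref{l:mod-ext}(iii) that deletes the corresponding tree cut edge of $\cN$ and regrafts it so that $D_i$ hangs off $D_j$ exactly as in $\cN'$; the resulting $\cN_1$ is tree-child because the new local configuration is one already present in the tree-child network $\cN'$, and by Lemma~\ref{l:mod-ext}(iii) $\cD$ stays an agreement tree-child digraph of $\cN_1$ and $\cN'$ with unchanged cut sizes. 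Now merge $D_i$ and $D_j$: subdivide the relevant edge of $D_j$ with a new tree vertex and join it to the in-degree-zero vertex of $D_i$ at which $D_i$ is attached, obtaining $D_{ij}$. A routine check of degree conditions (the new vertex is a tree vertex whose $D_i$-child is a tree vertex or a leaf) shows that $\cD_1 := (\cD \setminus \{D_i,D_j\}) \cup \{D_{ij}\}$ is a tree-child digraph displayed by both $\cN_1$ and $\cN'$ — the formerly cut edge now belongs to the embedding of $D_{ij}$ in each — with cut sizes $c-1$ in $\cN_1$ and $c'-1$ in $\cN'$. Hence $m_\tc(\cN_1,\cN') \le m-2$, and since an SNPR$^\pm$ has weight $2$, $d_\tc(\cN,\cN') \le 2 + d_\tc(\cN_1,\cN') \le 2 + (m-2) = m$.

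The arithmetic in the three cases is immediate; the real work, and the step I expect to be hardest, is the structural part of the last case: proving that a component $D_i$ with the required embedded-edge attachment in $\cN'$ always exists, that the regrafted network $\cN_1$ is genuinely tree-child, and that merging $D_i$ with $D_j$ lowers the cut size by exactly one on each side (i.e.\ creates no new cut edges). All of this hinges on the fine combinatorics of how root extensions glue the components of $\cD$ together in a tree-child network when no cut edge is a reticulation edge.
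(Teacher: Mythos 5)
Your overall strategy --- induct on the total cut size, peel off reticulation cut edges with weight-$1$ SNPR$^-$/SNPR$^+$ operations via Lemma~\ref{l:mod-ext}(i), and otherwise spend a weight-$2$ SNPR$^\pm$ while merging two components of $\cD$ so that both cut sizes drop by one --- is exactly the paper's, and your base case and reticulation-cut-edge case are correct. The gaps are all in the main case, and two of them are genuine rather than merely unfinished. First, your structural claim that one can always choose a component $D_i$ whose attachment point in $\cN'$ lies in the interior of an embedded edge of another component is false: nothing prevents every component from attaching on the root path of another component or directly below $\rho$ (picture a chain in which $D_k$ hangs off the root path of $D_{k-1}$, down to $D_1$ hanging off the root edge). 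The paper therefore treats three cases for the tail $v'$ of the cut edge in $\cN'$ --- interior of an embedded edge (C1), on a root path (C2), and $v'=\rho$ (C3) --- and the merge of $D_i$ and $D_j$ is different in each (subdividing an edge of $D_j$ in (C1), but adding a new in-degree-zero vertex with two outgoing edges in (C2), and attaching to $\rho$ in (C3)). Relatedly, the paper does not get to pick $D_i$ by inspecting $\cN'$; it must pick the cut edge $e=(u,w)$ in $\cN$ to be \emph{lowest} (every directed path from $w$ to a leaf lies in $\cR$), and this choice is precisely what makes the merged digraph $D_{ij}'$ and the modified network $\cN''$ acyclic --- a point your sketch does not address.

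Second, your justification that the regrafted network is tree-child (``the new local configuration is one already present in $\cN'$'') only inspects the regraft site. The danger is at the \emph{prune} site: deleting $e=(u,w)$ and suppressing $u$ creates the edge $(p_u,c_u)$, which can produce a pair of parallel edges, a stack, or a pair of sibling reticulations in the new network regardless of where the pruned subnetwork is reattached. The paper's proof that $\cN''$ is tree-child rules these out by showing that each bad configuration, combined with the assumption that every reticulation edge lies in $\cR$, would force the forbidden configuration into $\cD$ itself, contradicting that $\cD$ is tree-child. So the skeleton of your argument matches the paper's, but the existence claim for your chosen $D_i$ is simply not true as stated, and the acyclicity and tree-child verifications each require the specific machinery (lowest cut edge, assumption (A), the three attachment cases) that the paper builds.
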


\begin{proof}
Let $\cD=\{D_\rho,D_1,D_2,\ldots,D_k\}$ be an agreement tree-child digraph for $\cN$ and $\cN'$, and let $\cR=\{R_\rho,R_1,R_2,\ldots,R_k\}$ and $\cR'=\{R_\rho',R_1',R_2',\dots, R_k'\}$ be an extension of $\cD$ in $\cN$ and $\cN'$, respectively.  Furthermore, let $c_\cD=|E_\cN-E_\cR|$ and $c_\cD'=|E_{\cN'}-E_{\cR'}|$ be the cut size of  $\cD$ in $\cN$ and $\cN'$, respectively. By Lemma~\ref{l:extension-equiv}, we may assume that $\cR$ and $\cR'$ is a root extension of $\cD$ in $\cN$ and $\cN'$, respectively. We show by induction on $c_\cD+c_\cD'$ that $d_{\tc}(\cN,\cN’) \leq c_\cD+c_\cD'$. The lemma then follows by choosing $\cD$ to be an agreement tree-child digraph for $\cN$ and $\cN'$ such that $c_\cD+c_\cD'=m_\tc(\cN,\cN')$. 

If $c_\cD+c_\cD'=0$, then $\cN\cong\cN'$ and consequently $d_\tc(\cN,\cN')=0$. Hence, the result follows. Now assume that $c_\cD+c_\cD'\geq 1$ and that the result holds for all pairs of tree-child networks $\cN_1$ and $\cN_1'$ on the same leaf set that have an agreement tree-child digraph $\cD_1$ with cut size  $c_{\cD_1}$ and $c_{\cD_1}'$  of $\cD_1$ in $\cN_1$ and $\cN_1'$, respectively, such that $c_{\cD_1}+c_{\cD_1}'<c_\cD+c_\cD'$. 

We first establish the lemma for a case that is easy to deal with. To this end, assume that there exists a reticulation edge $e=(u,v)$ in $\cN$ or $\cN'$ that is not contained in $\cR$ or $\cR'$, respectively. Without loss of generality, we may assume that $e\in\cN$. As $\cN$ is tree-child, $u$ is a tree vertex. Let $\cN''$ be the phylogenetic network obtained from $\cN$ by an SNPR$^-$ that deletes $e$ and suppresses the two resulting degree-two vertices. By Lemma~\ref{l:reticulation-edge}, $\cN''$ is tree-child. Furthermore, by Lemma~\ref{l:mod-ext}(i), $\cD$ is a tree-child digraph for $\cN''$ and there exists a root extension $\cR''$ of $\cD$ in $\cN''$ such that  $c_\cD-1= c_\cD''$, where $c_\cD''=|E_{\cN''}-E_{\cR''}|.$ 
Since $c_\cD''+c_\cD'<c_\cD+c_\cD'$, it now follows from the induction assumption that $$d_\tc(\cN'',\cN')\leq c_\cD''+c_\cD'.$$ Hence, there exists a tree-child SNPR sequence $\sigma$ connecting $\cN''$ and $\cN'$ with $w(\sigma) \leq c_\cD''+c_\cD'$. 
Moreover, since $\cN''$ can be obtained from $\cN$ by a single SNPR$^-$, we have
$$d_\tc(\cN,\cN')\leq  1+c_\cD''+c_\cD'= c_\cD+c_\cD',$$
thereby establishing the lemma under the assumption that such an edge $e$ exists.

To complete the proof, we may now assume that \\

\noindent {\bf (A)} each reticulation edge of $\cN$ and $\cN'$ is contained in $\cR$ and $\cR'$, respectively. \\

\noindent Hence, by symmetry, there exists a tree edge $e=(u,w)$ in $\cN$ that is not in $\cR$. Choose $e$ such that each directed path from $w$ to a leaf in $\cN$ only consists of edges in $\cR$. Since $\cN$ is acyclic such an edge exists.  Let $D_i$ be the element in $\cD$ with $i\in\{\rho,1,2,\ldots,k\}$ such that the root extension $R_i$ of $D_i$ in $\cN$ contains $w$. By 
the choice of $e$, $R_i$ exists and each edge in $\cN$ that lies on a directed path from $w$ to a leaf is an element of $R_i$. Thus, if $u=\rho$, then $\cN\cong\cN'$ and the result follows as $d_\tc(\cN,\cN')=0$. We may therefore assume that $u\ne \rho$. The next statement follows from Lemma~\ref{l:root-ext-basic}(i), the additional assumption that $u\ne\rho$, and assumption (A).

\begin{sublemma}\label{one}
In $\cN$, the vertex $u$ is a tree vertex, and $w$ is either a tree vertex or a leaf. 
\end{sublemma}

\noindent By the choice of $e$, observe that the root path of $w$ consists only of $w$. In turn, $w$ has in-degree zero and out-degree zero or two in $R_i$. Hence, $w$ corresponds to a unique vertex, say $w_\cD$, of $D_i$. Let $r_{w'}$ be the vertex in $\cN'$ that $w_\cD$ corresponds to.  Now consider the root extension $R_i'$  of $D_i$ in $\cN'$.  Let $w'$ be the first vertex of the root path of $r_{w'}$. In contrast to the root path of $w$ in $R_i$, observe that the root path of $r_{w'}$ may consist of more than a single vertex in which case there is a directed path of length at least one from $w'$ to $r_{w'}$. Since $\rho\in\cD$ and $w_\cD\ne\rho$, it follows that $w'\ne\rho$. Hence, by (A), we have that $w'$ is a leaf, or has in-degree one and out-degree two in $\cN'$. Let $v'$ be the parent of $w'$ in $\cN'$. As $(v',w')\notin \cR'$, the next statement follows from Lemma~\ref{l:root-ext-basic}(i).

\begin{sublemma}\label{three}
In $\cN'$, the edge $e'=(v',w')$ is a tree edge, and $v'$ is either $\rho$ or a tree vertex in $\cN'$. 
\end{sublemma}

Let $D_j$ be the element in $\cD$ with $j\in\{\rho,1,2,\ldots,k\}$ such that the root extension $R_j'$ of $D_j$ in $\cN'$ contains $v'$. By  Lemma~\ref{l:root-ext-basic}(ii), $R_j'$ exists. We may have $i=j$. We next construct a  digraph $\cD'$ from $\cD$ and a  network $\cN''$ from $\cN$. After detailing the construction, we show that $\cN''$ is a tree-child network that  can be obtained from~$\cN$ by a  single SNPR$^\pm$, and that $\cD'$ is an agreement tree-child digraph for $\cN''$ and $\cN'$. Guided by the second part of~\eqref{three} and noting that, if $v'=\rho$, then $\rho$ is a singleton component in $\cD$, there are three cases to consider, which are illustrated in Figure~\ref{fig:lemma-6-4}:

\begin{figure}[t]
\centering
\scalebox{0.87}{\input{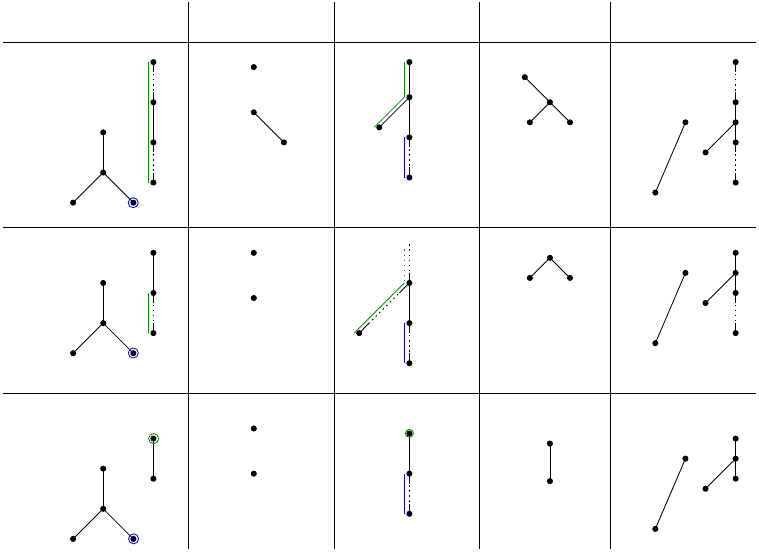_t}}
\caption{The three Cases (C1)--(C3) as described in the proof of Lemma~\ref{l:dtc<mtc}. Blue in $\cN$ and $\cN'$ indicates edges and vertices in $R_i$ and $R_i'$, and green in $\cN$ and $\cN'$ indicates edges and vertices in $R_j$ and $R_j'$. Vertex labels $y_1$ and $y_2$ are only used to clarify the figure.}
\label{fig:lemma-6-4}
\end{figure}

\begin{enumerate}[(C1)]
\item Suppose that $v'$ is not a vertex of a root path of a vertex in $\cR'$. Clearly $v'\ne \rho$. Then $v'$ has in-degree one and out-degree one in $R_j'$. Recall that each edge in $D_j$ corresponds to a unique directed path in $R_j'$ that connects the two end vertices of that edge. Let $(s_\cD,t_\cD)$ be the edge in $D_j$ that corresponds to a directed path in $R_j'$ that contains the two edges incident with $v'$. Obtain $D'_{ij}$ from $D_i$ and $D_j$ by subdividing $(s_\cD,t_\cD)$ with a new vertex $p_\cD$ and adding the edge $(p_\cD,w_\cD)$. Turning to $\cN$, let $f$ be an edge that lies on the  directed path in $R_j$ that corresponds to the edge $(s_\cD,t_\cD)$ in $D_j$. Obtain $\cN''$ from $\cN$ by deleting $e$, suppressing $u$, subdividing $f$ with a new vertex $p$, and adding the edge $(p,w)$.
\item Suppose that $v'\ne\rho$ and that $v'$ is a vertex of a root path $P$ of $\cR'$. Let $r_{v'}$ be the last vertex of $P$ in $\cN'$. Noting that $r_{v'}$ corresponds to a vertex $v_\cD$ of in-degree zero in $D_j$, obtain $D'_{ij}$ from $D_i$ and $D_j$ by adding the two edges $(p_\cD,v_\cD)$ and $(p_\cD, w_\cD)$, where $p_\cD$ is a new vertex. Turning to $\cN$, let $r_v$ be the vertex in $R_j$ that $v_\cD$ corresponds to, and let $v$ be the first vertex of the root path of $r_v$. As $v'\ne \rho$ and $\cD$ is an agreement digraph for $\cN$ and $\cN'$, we have $v\ne\rho$. Moreover, it follows from (A) that $v$ is not a reticulation in~$\cN$. Thus,~$v$ has a unique parent, say $t$, in $\cN$. Then, obtain $\cN''$ from $\cN$ by deleting $e$, suppressing $u$, subdividing the edge $f=(t,v)$ with a new vertex~$p$, and adding the edge $(p,w)$.
\item Suppose that $v'=\rho$. As $\rho$ has out-degree one in $\cN$ and $\cN'$, each of $D_j$ and~$R_j'$ consist of the isolated vertex $\rho$ only. Then obtain $D'_{ij}$ from $D_i$ and~$D_j$ by adding a new edge $(\rho,w_\cD)$. Moreover, obtain $\cN''$ from $\cN$ by deleting $e$, suppressing $u$, subdividing the edge $f$ that is directed out of $\rho$ with a new vertex $p$, and adding the edge $(p,w)$.
\end{enumerate}

\noindent As $\cN'$ does not contain a directed cycle, it follows from the construction that $D'_{ij}$ is acyclic in all three cases. Hence, as $\cD$ is a phylogenetic digraph for $\cN'$, $$\cD'=(\cD-\{D_i,D_j\})\cup\{D'_{ij}\}$$ is a phylogenetic digraph of $\cN'$. Let $E_i'$ and $E_j'$ be the edge set of $R_i'$ and $R_j'$ respectively, and let $R_{ij}'$ be the subgraph of $\cN'$ induced by the edge set $E_i'\cup E_j'\cup\{\{v',w'\}\}$. Since $\cR'$ is a root extension of $\cD$ in $\cN'$, it again follows from the construction that $$(\cR'-\{R_i',R_j'\})\cup\{R_{ij}'\}$$ is a root extension of $\cD'$ in $\cN$. 

We next turn to $\cD'$ and show that $\cD'$ is tree-child. Since $\cD$ is tree-child, it follows from the definition that $\cR'$ is tree-child. Moreover, since $w'$ has in-degree zero in $R_i'$, it follows that $w'$ has in-degree one in $R_{ij}'$. It is now straightforward to check that $(\cR'-\{R_i',R_j'\})\cup\{R_{ij}'\}$  is tree-child. Hence, again by definition, $\cD'$ is also tree-child. 

The following statement is now an immediate consequence of the construction.

\begin{sublemma}\label{two}
The cut size of $\cD'$ in $\cN'$ is $c_\cD'-1$.
\end{sublemma}

Next, we establish that $\cN''$ is a tree-child network on $X$.

\begin{sublemma}\label{four}
The network $\cN''$ is acyclic.
\end{sublemma}

\begin{proof}
Using the same notation as in the construction of $\cN''$ from $\cN$, recall that $e=(u,w)$ is the edge in $\cN$ that is deleted and that $f$ is the edge in $\cN$ that is subdivided with $p$ in the process of obtaining $\cN''$. To ease reading, let $f=(p_p,c_p)$ regardless of which of (C1)--(C3) applies. Since $\cN$ is acyclic, any directed cycle in~$\cN''$ contains $p$. If $\cN''$ has been obtained from $\cN$ as described in (C3), then $\cN''$ is acyclic because $p$ has in-degree one and out-degree two in $\cN''$ and is adjacent to~$\rho$. Hence, we may assume that $\cN''$ has been obtained as described in (C1) or~(C2). Towards a contradiction, assume that $\cN''$ contains a directed cycle. Then there exists a directed path $P$ from $w$ to $c_p$ in $\cN$ whose last edge is $f$. If $\cN''$ has been obtained from $\cN$ as described in  (C2), then $f\in\cN$ and $f\notin\cR$, a contradiction  because $P$ contains an edge not in $\cR$, which is not possible by the choice of $e$ as described in the paragraph following the statement of assumption (A). On the other hand, if $\cN''$ has been obtained from $\cN$ as described in  (C1), then, again by the choice of $e$ and the existence of $P$, we have $R_i=R_j$. Since $\cD$ is an agreement digraph for $\cN$ and $\cN'$, it follows that the edge $(s_\cD,t_\cD)$ in $D_i$ can be reached from~$w_\cD$, thereby contradicting that $D'_{ij}$ is acyclic. 
\end{proof}

\noindent It now follows from the construction of $\cN''$ from $\cN$ and~\eqref{four} that $\cN''$ is a phylogenetic network on $X$. For the remainder of the proof, let $D_u$ be the element in~$\cD$ with $u\in\{\rho,1,2,\ldots,k\}$ such that the root extension $R_u$ of $D_u$ in $\cN$ contains $u$. By Lemma~\ref{l:root-ext-basic}(ii), $R_u$ exists.

\begin{sublemma}\label{five}
The phylogenetic network $\cN''$ is tree-child.
\end{sublemma}

\begin{proof}
Again using the same notation as in the construction of $\cN''$ from $\cN$, it follows from~\eqref{one} that the newly added edge $(p,w)$ in $\cN''$ is a tree edge. Noting that $u$ is a tree vertex by~\eqref{one} in $\cN$, let $p_u$ be the parent of $u$, and let $c_u$ be the child of $u$ that is not $w$ in $\cN$. Observe that $(p_u,c_u)$ is an edge in $\cN''$. Now assume that $\cN''$ is not tree-child. 

First suppose that $\cN''$ contains a pair of parallel edges. Then $(p_u,c_u)$, $(p_u,u)$, and $(u, c_u)$ are edges of an underlying three-cycle in $\cN$.\ Assumption (A) and Lemma~\ref{l:root-ext-basic}(i) imply that all three edges incident with $c_u$ are edges in $R_u$. If $(p_u,u)\notin R_u$, then $u$ has in-degree zero and out-degree one in $R_u$. It follows that~$u$ is a vertex of $\cR$ but not a vertex of the embedding of $\cD$ in $\cN$ that underlies $\cR$. Hence, the unique child of $u$ in $R_u$ has in-degree one in $R_u$ because $\cR$ is a root extension of $\cD$, a contradiction as $c_u$ has in-degree two in $R_u$. Thus, $(p_u,u)\in R_u$. It follow that $D_u$ contains a pair of parallel edges because $e\notin \cR$, a contradiction to $\cD$ being tree-child. 

Second suppose that $\cN''$ contains an edge that is incident with two reticulations. Then $p_u$ and $c_u$ are reticulations in $\cN$. It follows from (A) and Lemma~\ref{l:root-ext-basic}(i), that~$R_u$ contains  the three edges incident with $c_u$ and the three edges incident with $p_u$. Thus, $D_u$ contains an edge that is incident with two reticulations because~$e\notin\cR$, another contradiction. 

Third suppose that $\cN''$ contains a pair of sibling reticulations. Then $c_u$ is a reticulation and $p_u$ is a tree vertex whose child that is not $u$, say $s_u$, is a reticulation in $\cN$.  Again by (A) and Lemma~\ref{l:root-ext-basic}(i), $R_u$ contains all three edges that are incident with $c_u$ and there exists an element $R_{u'}\in\cR$ with $u'\in\{\rho,1,2,\ldots,k\}$ such that~$R_{u'}$ contains all three edges incident with $s_u$. If $u\ne u'$, then $(p_u,u)\notin\cR$ and, thus, $u$ has in-degree zero and out-degree one in $R_u$. It follows that $u$ is a vertex of $\cR$ but not a vertex of the embedding of $\cD$ in $\cN$ that underlies $\cR$. Hence, the unique child of $u$ in $R_u$ has in-degree one in $R_u$, a contradiction as $c_u$ has in-degree two in~$R_u$. We may therefore assume that $u=u'$. But then $R_u$ contains a pair of sibling reticulations $s_u$ and $c_u$ because $e\notin\cR$, a final contradiction. 
\end{proof}

\noindent It now follows from~\eqref{four} and \eqref{five} and the construction as detailed in \mbox{(C1)--(C3)} that $\cN''$ is a tree-child network on $X$ that can be obtained from $\cN$ by a single SNPR$^\pm$. We next show that $\cD'$ is a phylogenetic digraph for $\cN''$. To this end, we construct a root extension of $\cD'$ in $\cN''$. 

If $\cN''$ has been obtained from $\cN$ as described in (C1), obtain a root extension $R_{ij}$ of $D_{ij}'$ from $R_i$ and $R_j$ by subdividing $f$ in $R_j$ with a new vertex $p$  and  adding the edge $(p,w)$. Otherwise, if $\cN''$ has been obtained from $\cN$ as described in (C2) or (C3), obtain $R_{ij}$ from $R_i$ and $R_j$ by adding the edge $(p,v)$, where $p$ is a new vertex, and adding the edges $(p,v)$ and $(p,w)$. Then, as $\cR$ is a root extension of $\cD$ in $\cN$ and $u$ is a tree vertex in $\cN$ by~\eqref{one}, it follows that the digraph obtained from  $$\cR''=(\cR-\{R_i,R_j\})\cup \{R_{ij}\}$$ by suppressing (resp.\ deleting) $u$ if it has in-degree one (resp.\ zero) in $\cR''$ is a root extension of $\cD'$ in $\cN''$. Thus, $\cD'$ is an agreement tree-child digraph for $\cN''$ and $\cN'$. 

The next statement is again an immediate consequence of the construction of~$\cR''$.

\begin{sublemma}\label{six}
The cut size of $\cD'$ in $\cN''$ is $c_\cD-1$.
\end{sublemma}

\noindent  By combining~\eqref{two} and~\eqref{six}, it now follows from the induction assumption that 
$$d_\tc(\cN'',\cN')\leq c_\cD-1+c_\cD'-1.$$
Hence, there exists a tree-child SNPR sequence $\sigma$ connecting $\cN''$ and $\cN'$ with $w(\sigma)\leq c_\cD-1+c_\cD'-1$. 
Since $\cN''$ can be obtained from $\cN$ by a single SNPR$^\pm$, we have
$$d_\tc(\cN,\cN')\leq 2+c_\cD-1+c_\cD'-1= c_\cD+c_\cD' .$$
The lemma now follows.
\end{proof}

\begin{figure}[t]
\center
\scalebox{0.87}{\input{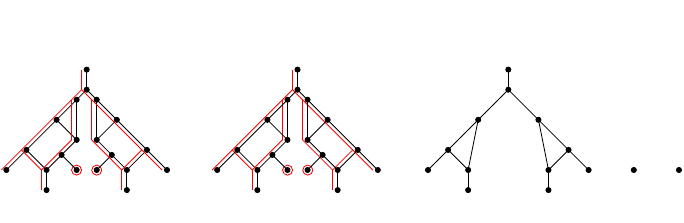_t}}
\caption{An example of two tree-child networks $\cN$ and $\cN'$ and an agreement tree-child digraph $\cD$ for $\cN$ and $\cN'$ for which \mbox{$6=d_\tc(\cN,\cN')$} but $m_\tc(\cN,\cN')=8$. An extension of $\cD$ in $\cN$ and $\cN'$ is indicated by red lines.}
\label{fig:counterexample}
\end{figure}

Figure~\ref{fig:counterexample} shows  two tree-child networks for which the inequality established in Lemma~\ref{l:dtc<mtc} is strict. However, the next lemma shows that, for two tree-child networks $\cN$ and $\cN'$, the difference $m_{\rm tc}(\cN, \cN')-d_{\rm tc}(\cN, \cN')$ cannot be arbitrary large. In preparation for the  lemma, we need an additional definition. Let $\cD$ be a phylogenetic digraph of a phylogenetic network $\cN$ on $X$. Furthermore, let $\cR$ be an extension of $\cD$ in $\cN$, and let $\cM$ be the embedding that underlies $\cR$. Now consider a directed path $P$ in $\cM$. Let $V=\{v_1,v_2,\ldots,v_n\}$ be the subset of reticulations in~$\cN$ that lie on $P$. Then the {\it path extension} of $P$ contains precisely all edges of $P$ and, additionally, each edge of a maximal length directed path in $\cN$ that only consists of edges in $E_\cR-E_\cM$ and ends at a vertex in $V$. Note that the path extension of~$P$ may contain each edge of~$P$ and no additional edge, even if $V\ne\emptyset$.

\begin{lemma}\label{l:mtc<dtc}
Let $\cN$ and $\cN'$ be two tree-child networks on $X$. Then $$\frac 1 2 m_{\tc}(\cN,\cN’) \leq d_{\tc}(\cN,\cN’).$$
\end{lemma}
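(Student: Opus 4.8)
The plan is to establish the equivalent inequality $m_{\tc}(\cN,\cN')\le 2\,d_{\tc}(\cN,\cN')$. Since $d_{\tc}(\cN,\cN')$ is the minimum weight of a tree-child $\SNPR$ sequence connecting $\cN$ and $\cN'$, it suffices to prove that $m_{\tc}(\cN,\cN')\le 2\,w(\sigma)$ for \emph{every} tree-child $\SNPR$ sequence $\sigma=(\cN=\cN_0,\cN_1,\ldots,\cN_t=\cN')$, and I would do this by induction on the length $t$. The case $t=0$ is immediate, since then $\cN\cong\cN'$ and $\cD=\{\cN\}$ is an agreement tree-child digraph with both cut sizes equal to $0$. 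For $t\ge 1$, I would first invoke Corollary~\ref{c:nice-seq} --- noting that the transformations used in its proof, via Lemma~\ref{l:swap}, never increase the length or weight of a sequence --- to assume that $\sigma$ has one of two forms: either every step of $\sigma$ is an $\SNPR^{-}$, or $\cN_t$ is obtained from $\cN_{t-1}$ by an $\SNPR^{+}$ or an $\SNPR^{\pm}$.

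If every step of $\sigma$ is an $\SNPR^{-}$, then $\cN'$ is obtained from $\cN$ by deleting $t$ reticulation edges, so $\cN$ displays $\cN'$. Taking $\cD=\{\cN'\}$, with the embedding of $\cN'$ in $\cN$ obtained by re-instating the $t$ deleted reticulation edges as edges of $\cN$ not used by the embedding, one checks that $\cD$ is an agreement tree-child digraph whose cut size in $\cN'$ is $0$ and whose cut size in $\cN$ is at most $t$: the embedding already contains every vertex of $\cN$ and omits exactly the $t$ deleted edges, and a short check shows that neither (E1) nor (E2) can add any further edge. Hence $m_{\tc}(\cN,\cN')\le t=w(\sigma)$.

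Otherwise set $\sigma'=(\cN_0,\ldots,\cN_{t-1})$ and let $w_t\in\{1,2\}$ be the weight of the last operation. By the induction hypothesis there is an agreement tree-child digraph $\cD$ for $\cN$ and $\cN_{t-1}$ with $c_{\cD}+c'_{\cD}=m_{\tc}(\cN,\cN_{t-1})\le 2\bigl(w(\sigma)-w_t\bigr)$, and by Lemma~\ref{l:extension-equiv} I may fix a root extension $\cR$ of $\cD$ in $\cN_{t-1}$. If $\cN_t$ is obtained by an $\SNPR^{+}$, then the two subdivided edges are necessarily tree edges (else $\cN_t$ would not be tree-child), so Lemma~\ref{l:mod-ext}(ii) applies: $\cD$ is still a tree-child digraph of $\cN_t$, hence an agreement tree-child digraph for $\cN$ and $\cN_t$, with cut size $c'_{\cD}+1$ in $\cN_t$, and therefore $m_{\tc}(\cN,\cN_t)\le m_{\tc}(\cN,\cN_{t-1})+1\le 2w(\sigma)-1$. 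If $\cN_t$ is obtained by an $\SNPR^{\pm}$ deleting the tree edge $e$ of $\cN_{t-1}$ with $e\notin\cR$, then Lemma~\ref{l:mod-ext}(iii) shows directly that $\cD$ is an agreement tree-child digraph for $\cN$ and $\cN_t$ with both cut sizes unchanged, and since $w_t=2$ we get $m_{\tc}(\cN,\cN_t)\le m_{\tc}(\cN,\cN_{t-1})\le 2w(\sigma)-4$.

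The remaining case, which I expect to be the main obstacle, is that $\cN_t$ is obtained from $\cN_{t-1}$ by an $\SNPR^{\pm}$ deleting a tree edge $e$ with $e\in\cR$; now $\cD$ genuinely uses the structure around $e$ and need not be a digraph of $\cN_t$. Here I would perform a local surgery on $\cD$: using the path extension of the directed path of the underlying embedding that contains $e$ (this is exactly the bookkeeping the notion of path extension is designed for), cut $\cD$ at a carefully chosen edge, together with the cascading cleanup of vertices whose degree drops, to obtain an agreement tree-child digraph $\cD^{\star}$ that admits an extension in $\cN_{t-1}$ avoiding $e$, while the cut size of $\cD^{\star}$ exceeds that of $\cD$ by at most two in each of $\cN$ and $\cN_{t-1}$. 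Applying Lemma~\ref{l:mod-ext}(iii) to $\cD^{\star}$ then yields an agreement tree-child digraph for $\cN$ and $\cN_t$ of total cut size at most $m_{\tc}(\cN,\cN_{t-1})+4\le 2(w(\sigma)-2)+4=2w(\sigma)$, closing the induction; the lemma then follows from the opening reduction. The delicacy is that naively cutting one edge of $\cD$ usually lets the extension re-cover the very same directed path, so nothing is freed; the surgery must therefore be placed correctly relative to the reticulations lying on that path and the extension tails feeding into them, and one must verify simultaneously that tree-childness of the digraph (and hence of its extensions) survives and that the cut-size increase is bounded in \emph{both} networks.
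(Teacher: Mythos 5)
Your overall architecture matches the paper's proof almost exactly: induction on the length of a tree-child $\SNPR$ sequence normalised via Corollary~\ref{c:nice-seq}, the easy dispatch of the $\SNPR^{+}$ step via Lemma~\ref{l:mod-ext}(ii) and of the $\SNPR^{\pm}$ step with $e\notin\cR$ via Lemma~\ref{l:mod-ext}(iii), and the target of a cut-size increase of at most two in each network for the remaining case. Your direct treatment of the all-$\SNPR^{-}$ sequence (taking $\cD=\{\cN'\}$ embedded in $\cN$) is a small, correct variation on the paper, which instead reverses the sequence and absorbs that case into the $\SNPR^{+}$ branch.

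However, there is a genuine gap: the case $e\in\cR$, which you yourself flag as ``the main obstacle,'' is not proved but only described. This case is where essentially all of the content of the paper's argument lives. The paper first splits it further into $e\in\cR'_{t-1}\setminus\cM'_{t-1}$ (where a direct re-routing of the extension, cases (R1)--(R4), leaves the cut sizes unchanged) and $e\in\cM'_{t-1}$; for the latter it defines the modified digraph $\cD$ by a five-way case analysis (D1)--(D5) on the local degree configuration around the edge $e_{\cD'}$ of $\cD'$ corresponding to $e$, then proves three separate sublemmas: that $\cD$ is an agreement digraph of both $\cN_0$ and $\cN_t$ (which requires rebuilding embeddings in both networks), that $\cD$ is tree-child (which is exactly why (D4) and (D5) delete a reticulation edge into $v'_{\cD'}$ rather than merely suppressing $u_{\cD'}$), and the two cut-size bounds, the one for $\cN_t$ requiring the explicit bookkeeping of (R1$'$)--(R11$'$) and the accompanying table of which edges enter and leave the cut. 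Your proposal asserts that a ``carefully chosen'' cut with ``cascading cleanup'' achieves all of this, and even correctly identifies the two main dangers (the extension re-covering the freed path, and tree-childness failing), but offers no construction and no verification. Since the inequality you need ($+2$ in \emph{each} of the two networks, simultaneously, while preserving tree-childness) is exactly what is delicate, the proof cannot be considered complete without carrying out this surgery explicitly.
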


\begin{proof}
Let $\sigma=(\cN=\cN_0,\cN_1,\cN_2,\ldots,\cN_t=\cN')$ be a tree-child SNPR sequence connecting $\cN$ and $\cN'$ such that $\cN_{i-1}$ and $\cN_{i}$ are non-isomorphic for each $i\in\{1,2,\ldots,t\}$. It follows from Bordewich et al.~\cite[Proposition 3.2]{bordewich17} that $\sigma$ exists. 
By Corollary~\ref{c:nice-seq}, we may assume that $\cN_t$ can be obtained from $\cN_{t-1}$ by an SNPR$^+$ or an SNPR$^\pm$, or $\cN_i$ can be obtained from $\cN_{i-1}$ by an SNPR$^-$ for each $i\in\{1,2,\ldots,t\}$. If the latter holds, then, by the reversibility of SNPR, $$(\cN'=\cN_t,\ldots,\cN_2,\cN_1,\cN_0=\cN)$$ is a tree-child SNPR sequence connecting $\cN$ and $\cN'$ and  $\cN_i$ is obtained from $\cN_{i+1}$ by an SNPR$^+$ for each $i\in\{t-1,t-2,\ldots,0\}$. Hence, we may assume without loss of generality that $\cN_t$ can be obtained from $\cN_{t-1}$ by either an SNPR$^+$ or an SNPR$^\pm$.
We show by induction on  $t$ that there exist an agreement tree-child digraph $\cD$ for $\cN$ and $\cN'$ and extensions $\cR$ and $\cR'$ of $\cD$ in $\cN$ and $\cN'$, respectively, such that $\frac 1 2 m_\tc(\cN,\cN’) \leq w(\sigma)$. The lemma then follows by choosing $\sigma$ such that $w(\sigma)=d_\tc(\cN,\cN')$. 

If $t=1$, then there are two cases to consider. First assume that $\cN'$ can be obtained from $\cN$ by an SNPR$^+$. Then $w(\sigma)=1$, and $\cN$ is an agreement tree-child digraph for $\cN$ and $\cN'$. Trivially, there is an extension $\cR$ of $\cN$ in $\cN$ and an extension $\cR'$ of $\cN$ in $\cN'$ such that $|E_\cN-E_\cR|+|E_{\cN'}-E_{\cR'}|=1$ and, thus, $\frac 1 2 m_\tc(\cN,\cN')\leq \frac 1 2 \cdot 1<w(\sigma)$. 
Second assume that $\cN'$ can be obtained from $\cN$ by an SNPR$^\pm$ in which case $w(\sigma)=2$. Recalling the global assumption stated at the end of Section~\ref{sec:measures}, let $e=(u,v)$ be the tree edge in $\cN$ that is deleted in the process of obtaining $\cN'$ from $\cN$. Let $p_u$ be the parent of $u$, and let $c_u$ be the child of $u$ in $\cN$ that is not $v$. Since $u$ is a tree vertex, $p_u$ and $c_u$ are well defined. If~$p_u$ is a tree vertex, let $s$ be the  child of $p_u$ in $\cN$ that is not $u$. Furthermore, if $c_u$ is a reticulation, let $s'$ be the parent of $c_u$ in $\cN$ that is not $u$. If $s$ and $c_u$ are both reticulations, let $\cD$ be the leaf-labelled acyclic digraph $\cD$ obtained from $\cN$ by deleting $e$ and $(s',c_u)$, and suppressing $u$, $c_u$, and $s'$. Otherwise, if at least one of $s$ and $c_u$ is not a reticulation, let $\cD$ be the leaf-labelled acyclic digraph $\cD$ obtained from $\cN$ by deleting $e$ and suppressing $u$. In both cases, $\cD$ is an agreement digraph of $\cN$ and $\cN'$. We next show that $\cD$ is tree-child. If $\cD$ contains a pair of parallel edges or a stack, then $\cN\cong\cN'$, a contradiction to the choice of $\sigma$. On the other hand, if $\cD$ contains a pair of sibling reticulations, then $s$ and $c_u$ are reticulations in~$\cN$. By construction, it follows that there is no reticulation in $\cD$ that corresponds to $c_u$. Hence, $\cD$ is tree-child. Moreover, there are  extensions $\cR$ of $\cD$ in $\cN$ and $\cR'$ of $\cD$ in $\cN'$ such that  $|E_\cN-E_\cR|+|E_{\cN'}-E_{\cR'}|\leq 2+2=4$, where the first inequality becomes an equality only if $s$ and $c_u$ are both reticulations in $\cN$. It now follows that  $\frac 1 2 m_\tc(\cN,\cN')\leq \frac 1 2\cdot 4=  w(\sigma)$. This completes the proof of the base case.

Now suppose that $t>1$ and that the lemma holds for all pairs of tree-child networks  for which there exists a tree-child SNPR sequence connecting the two networks of length less than $t$. Let $$\sigma_1=(\cN_0,\cN_1,\cN_2,\ldots,\cN_{t-1})\text{ and }\sigma_2=(\cN_{t-1},\cN_t).$$ By Corollary~\ref{c:nice-seq}, we may again assume that $\cN_{t-1}$ can be obtained from $\cN_{t-2}$ by an SNPR$^+$ or an SNPR$^\pm$, or $\cN_i$ can be obtained from $\cN_{i-1}$ by an SNPR$^-$ for each $i\in\{1,2,\ldots,t-1\}$.  Observe that $w(\sigma)=w(\sigma_1)+w(\sigma_2)$. By the induction assumption, we have 
$$\frac 1 2 m_\tc(\cN_0,\cN_{t-1})\leq w(\sigma_1).$$ 
Hence, there exist a maximum agreement tree-child digraph $\cD'$ for $\cN_0$ and $\cN_{t-1}$ and extensions $\cR_0'$ and $\cR_{t-1}'$ of $\cD'$ in $\cN_0$ and $\cN_{t-1}$, respectively, such that 
\begin{equation}\label{induction}
\frac 1 2 m_\tc(\cN_0,\cN_{t-1})= \frac 1 2(|E_{\cN_0}-E_{\cR'_0}| +|E_{\cN_{t-1}}-E_{\cR'_{t-1}}|)\leq w(\sigma_1).
\end{equation}
Let $\cM'_0$ (resp.\ $\cM'_{t-1}$) be the embedding of $\cD'$ in $\cN_0$ (resp.\ $\cN_{t-1}$) that underlies $\cR'_0$ (resp.\ $\cR'_{t-1}$).

Assume that $\cN_t$ can be obtained from $\cN_{t-1}$ by an SNPR$^+$, in which case \mbox{$w(\sigma_2)=1$.}  Let $(u,w)$ and $(u',w')$ be the two edges in $\cN_{t-1}$ that are subdivided with a new vertex $v$ and $v'$, respectively, in obtaining $\cN_t$. Since $\cN_t$ is tree-child, $(u,w)$ and $(u',w')$ are tree edges. Furthermore, either $(v,v')$ or $(v',v)$ is a reticulation edge in $\cN_t$. Without loss of generality, we may assume that $(v',v)$ is a reticulation edge in $\cN_t$. It now follows from Lemma~\ref{l:mod-ext}(ii) that $\cD'$ is also a tree-child digraph for $\cN_{t}$ and there exists an extension $\cR_t'$ of $\cD'$ in $\cN_t$ such that
$$|E_{\cN_{t-1}}-E_{\cR'_{t-1}}|+1=|E_{\cN_{t}}-E_{\cR'_{t}}|.$$ Hence, we have
\begin{eqnarray}
\frac 1 2 m_\tc(\cN,\cN')&\leq& \frac 1 2 (|E_{\cN_0}-E_{\cR'_0}|+ |E_{\cN_{t}}-E_{\cR'_{t}}|)\nonumber\\
&= & \frac 1 2 (|E_{\cN_0}-E_{\cR'_0}|+ |E_{\cN_{t-1}}-E_{\cR'_{t-1}}|+1)\nonumber\\
&<& w(\sigma_1)+w(\sigma_2)=w(\sigma),\nonumber
\end{eqnarray}
where the last inequality follows from Equation~\eqref{induction} and the fact that $w(\sigma_2)=1$.

For the remainder of the proof, we may therefore assume that $\cN_t$ is obtained from $\cN_{t-1}$ by an SNPR$^\pm$ in which case $w(\sigma_2)=2$.\ Let $e=(u,v)$ be the edge in~$\cN_{t-1}$ that is deleted in obtaining $\cN_t$ from $\cN_{t-1}$.\ By the definition of SNPR$^\pm$ and the global assumption, $u$ and $v$ are both tree vertices. Let $p_u$ be the parent of~$u$, and let $c_u$ be the child of $u$ with $c_u\ne v$ in $\cN_{t-1}$. Observe that $(p_u,c_u)$ is an edge in~$\cN_t$. Furthermore, let $(p_{u'},c_{u'})$ be the edge in $\cN_{t-1}$ that is subdivided with a new vertex $u'$ in obtaining $\cN_t$. Then $(u',v)$, $(p_{u'},u')$, and $(u',c_{u'})$ are edges in~$\cN_t$. 

First assume that $e\notin \cR'_{t-1}$. It  follows from Lemma~\ref{l:mod-ext}(iii) that $\cD'$ is also a tree-child digraph for $\cN_{t}$ and there exists an extension $\cR'_t$ of $\cD'$ in $\cN_t$ such that
$$|E_{\cN_{t-1}}-E_{\cR'_{t-1}}|=|E_{\cN_{t}}-E_{\cR'_{t}}|$$ 
and, therefore, again by Equation~\eqref{induction},
\begin{eqnarray}
\frac 1 2 m_\tc(\cN,\cN')&\leq&\frac 1 2 (|E_{\cN_0}-E_{\cR'_0}|+ |E_{\cN_{t}}-E_{\cR'_{t}}|)\nonumber\\
&=&\frac 1 2 (|E_{\cN_0}-E_{\cR'_0}|+ |E_{\cN_{t-1}}-E_{\cR'_{t-1}}|)\nonumber\\
&<& w(\sigma_1)+w(\sigma_2)=w(\sigma).\nonumber
\end{eqnarray}

Hence, we may assume that $e\in\cR'_{t-1}$. Let $R'_u$ be the element in $\cR'_{t-1}$ that contains  $e$,  let $R'_{c_u}$ be the element in $\cR'_{t-1}$ that contains $c_u$, and let $R'_{c_{u'}}$ be the element in $\cR'_{t-1}$ that contains $c_{u'}$. Recall that $R'_u$, $R'_{c_u}$, and $R'_{c_{u'}}$ are not necessarily pairwise distinct. If $(p_{u'},c_{u'})\in R'_{c_{u'}}$, then set $R_{c_{u'}}$ to be the directed graph obtained from  $R'_{c_{u'}}$ by subdividing $(p_{u'},c_{u'})$ with a new vertex $u'$. Otherwise, if \mbox{$(p_{u'},c_{u'})\notin R'_{c_{u'}}$,}  then set $R_{c_{u'}}$ to be the directed graph obtained from  $R'_{c_{u'}}$ by adding $(u',c_{u'})$. The construction is shown in Figure~\ref{fig:Rcu'}. Intuitively, $R_{c_{u'}}$ is an extension of a component of a phylogenetic  digraph in $\cN_t$. Lastly, if $R_u'=R'_{c_{u'}}$, then set $R'_u=R_{c_{u'}}$ and, if $R_{c_u}'=R'_{c_{u'}}$, then set $R'_{c_u}=R_{c_{u'}}$ to account for the modification in obtaining $R_{c_{u'}}$ from $R'_{c_{u'}}$.

\begin{figure}[t]
\center
\scalebox{0.87}{\input{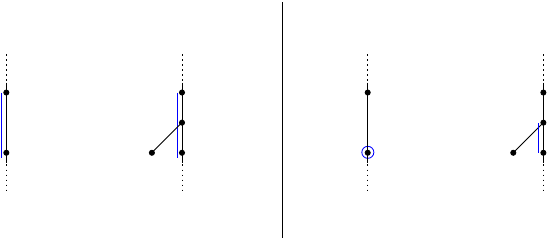_t}}
\caption{The two cases in the construction of $R_{c_{u'}}$ from $R'_{c_{u'}}$. Blue indicates vertices and edges of $R_{c_{u'}}$ and $R'_{c_{u'}}$.}
\label{fig:Rcu'}
\end{figure}

Assume that $e\notin\cM'_{t-1}$. Then $(u,c_u)\notin\cR'_{t-1}$. It again follows from the construction of $\cN_t$ from $\cN_{t-1}$ that $\cD'$ is a phylogenetic digraph of $\cN_{t}$. Guided by~$\cR'_{t-1}$, we next construct an extension of $\cD'$ in $\cN_t$. Let $W$ be the subset of vertices of $\cN_{t-1}$ that lie on a directed path from a vertex with in-degree zero to $u$ in $R'_u$.  

\begin{enumerate}
\item [(R1)] If $u$ is the only element in $W$ and $R'_u\ne R'_{c_u}$, then obtain $R_u$ from $R'_u$ by deleting $u$, and set  $R_{c_u}=R'_{c_u}$. 
\item [(R2)] If $W$ contains $u$ and $|W|\geq 2$, and $R'_u\ne R'_{c_u}$, then obtain $R_u$ from $R'_u$ by deleting each vertex in $W$, and obtain $R_{c_u}$ from $R'_{c_u}$ by adding $(p_u,c_u)$ and each edge of $R'_u$ that joins two vertices in $W-\{u\}$. 
\item [(R3)] If $u$ is the only element in $W$ and $R'_u=R'_{c_u}$, then obtain $R_u$ from $R'_u$ by deleting $u$.
\item [(R4)] If $W$ contains $u$ and $|W|\geq 2$, and $R'_u=R'_{c_u}$, then obtain $R_u$ from $R'_u$ by deleting $u$ and adding $(p_u,c_u)$. 
\end{enumerate}
As an aside, recall that we are dealing with extensions (and not with the more restricted root extensions). Indeed, if $c_u$ is a reticulation in $\cN_{t-1}$ and a vertex with in-degree one and out-degree one in $R_{c_u}'$, then $R_{c_u}$ is an extension and not a root extension. Now, regardless of which of (R1)--(R4) applies, let $$\cR'_t=(\cR'_{t-1}-\{R'_u,R'_{c_u},R'_{c_{u'}}\})\cup\{R_u,R_{c_u},R_{c_{u'}}\}.$$  It is easily checked that $\cR'_t$ is an extension of $\cD'$ in $\cN_t$ with
$$|E_{\cN_{t-1}}-E_{\cR'_{t-1}}|= |E_{\cN_{t}}-E_{\cR'_{t}}|,$$
and thus,
\begin{eqnarray}
\frac 1 2 m_\tc(\cN,\cN')&\leq&\frac 1 2 (|E_{\cN_0}-E_{\cR'_0}|+ |E_{\cN_{t}}-E_{\cR'_{t}}|)\nonumber\\
&=&\frac 1 2 (|E_{\cN_0}-E_{\cR'_0}|+ |E_{\cN_{t-1}}-E_{\cR'_{t-1}}|)\nonumber\\
&<& w(\sigma_1)+w(\sigma_2)=w(\sigma).\nonumber
\end{eqnarray}

We complete the proof of the lemma by assuming  that $e\in\cM'_{t-1}$. 
Let $e_{\cD'}=(u_{\cD'},v_{\cD'})$ be the unique edge in $\cD'$ that $e$ corresponds to. If $u_{\cD'}$ has in-degree two in $\cD'$, let  $p_{\cD'}$ and $p'_{\cD'}$ be the two parents of $u_{\cD'}$. Furthermore, if $u_{\cD'}$ has out-degree two, let $v'_{\cD'}$ be the child of $u_{\cD'}$ that is not $v_{\cD'}$ and, if $v'_{\cD'}$ is a reticulation, let $p''_{\cD'}$ be the parent of $v'_{\cD'}$ that is not $u_{\cD'}$.  Since $\cD'$ is tree-child, 
observe that each of $p_{\cD'}$, $p'_{\cD'}$, and $p''_{\cD'}$ has, if it exists, in-degree at most one, and that there exists a  directed path from each of $p_{\cD'}$, $p'_{\cD'}$, and $p''_{\cD'}$ to a leaf in $\cD'$ that does not traverse a reticulation. Lastly, since $\cD'$ is tree-child, at most one of $u_{\cD'}$, $v_{\cD'}$, and $v'_{\cD'}$ is a reticulation. Noting that $u_{\cD'}\ne\rho$, because $u\ne\rho$ by the definition of SNPR$^\pm$, we next obtain a digraph $\cD$ from $\cD'$ in one of the following five ways, which are illustrated in Figure~\ref{fig:D-cases}.

\begin{figure}[t]
\center
\scalebox{0.87}{\input{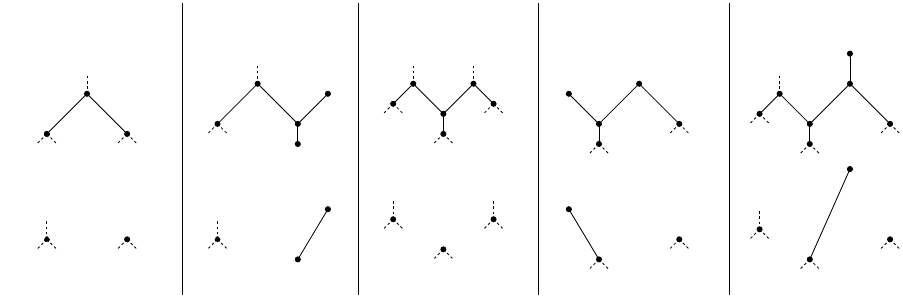_t}}
\caption{The setup of $\cD'$ and $\cD$ in all five cases (D1)--(D5) in the construction of $\cD$ from $\cD'$ as detailed in the proof of Lemma~\ref{l:mtc<dtc}. Dashed edges may or may not exist in $\cD'$ and $\cD$. Vertex labels $y_1$, $y_2$, and $y_3$ are only used to clarify the figure.}
\label{fig:D-cases}
\end{figure}

\begin{enumerate}
\item [(D1)]Suppose that $u_{\cD'}$ has in-degree zero and out-degree two (resp.\ in-degree one and out-degree two), and that neither $v_{\cD'}$ nor $v'_{\cD'}$ is a reticulation. Then obtain $\cD$ from $\cD'$ by deleting $e_{\cD'}$ and deleting (resp.\ suppressing) $u_{\cD'}$.
\item [(D2)]Suppose that $u_{\cD'}$ has in-degree zero and out-degree two (resp.\ in-degree one and out-degree two), and that $v_{\cD'}$ is a reticulation. Then obtain $\cD$ from $\cD'$ by deleting $e_{\cD'}$, suppressing $v_{\cD'}$, and deleting (resp.\ suppressing)~$u_{\cD'}$.
\item [(D3)]Suppose that  $u_{\cD'}$ is a reticulation. Then obtain $\cD$ from $\cD'$ by applying the following three steps in order. First, delete $(p_{\cD'},u_{\cD'})$, suppress $u_{\cD'}$, and delete the resulting edge $(p'_{\cD'},v_{\cD'})$. Second, if  $p_{\cD'}$  has in-degree zero and out-degree two (resp.\ in-degree one and out-degree two) in $\cD'$, delete (resp.\ suppress) $p_{\cD'}$. Third, if  $p'_{\cD'}$  has in-degree zero and out-degree two (resp.\ in-degree one and out-degree two) in $\cD'$, delete (resp.\ suppress) $p'_{\cD'}$.
\item [(D4)]Suppose that $u_{\cD'}$ has in-degree zero and out-degree two, and that $v'_{\cD'}$ is a reticulation. Then obtain $\cD$ from $\cD'$ by deleting $e_{\cD'}$ and $u_{\cD'}$, and suppressing $v'_{\cD'}$.
\item [(D5)]Suppose that $u_{\cD'}$ has in-degree one and out-degree two, and that $v'_{\cD'}$ is a reticulation. Then obtain $\cD$ from $\cD'$ by deleting $e_{\cD'}$, suppressing $u_{\cD'}$, deleting $(p''_{\cD'}, v'_{\cD'})$, suppressing $v'_{\cD'}$, and if $p''_{\cD'}$ has in-degree zero and out-degree two (resp.\ in-degree one and out-degree two) in $\cD'$, deleting (resp.\ suppressing) $p''_{\cD'}$.
\end{enumerate}

\noindent By construction, it follows that  $\cD$  neither contains any vertex with in-degree zero and out-degree one except for $\rho$ nor a vertex with in-degree one and out-degree one. Hence, $\cD$ is a collection of leaf-labelled acyclic digraphs whose union of leaf sets is~$X$. We next show that $\cD$ is an agreement tree-child digraph for $\cN_0$ and $\cN_t$.

\begin{sublemma}\label{agreement}
$\cD$ is an agreement digraph for $\cN_0$ and $\cN_t$.
\end{sublemma}

\begin{proof}
Since $\cD$ is a collection of leaf-labelled acyclic digraphs whose union of leaf sets is $X$, Properties (i) and (ii) in the definition of a phylogenetic digraph are satisfied. Observe that in each of (D1)--(D5), $\cD$ is obtained from $\cD'$ by an ordered sequence $S$ of edge deletions, and vertex suppressions and deletions. Furthermore, by construction, a vertex is only suppressed (resp.\ deleted) if it has in-degree one and out-degree one (resp.\ in-degree zero and out-degree one) after an incident edge has been deleted. Following the order of operations in $S$, obtain an embedding $\cM_0$ of $\cD$ in $\cN_0$ from $\cM_0'$ as follows. For each edge $f_{\cD'}$ that is deleted in obtaining $\cD$ from $\cD'$, delete each non-terminal vertex of the  directed path in $\cM_0'$ that corresponds to $f_{\cD'}$ and, for each vertex that is deleted in obtaining $\cD$ from $\cD'$, delete the corresponding vertex in $\cM_0'$ and each resulting vertex that has in-degree zero and out-degree one (relative to the embedding) until no such vertex exists.\ As $\cD'$ is a phylogenetic digraph of $\cN_0$, it follows from the construction that $\cM_0$ is an embedding of $\cD$ in~$\cN_0$ and that the elements in $\cM_0$ are pairwise vertex disjoint in $\cN_0$. Thus, Property (iii) in the definition of a phylogenetic digraph is  satisfied and $\cD$ is a phylogenetic digraph of $\cN_0$. 

We complete the proof by showing that there also exists an embedding $\cM_t$ of $\cD$ in~$\cN_t$.  Obtain $\cM_t$ from $\cM_{t-1}'$ by applying the following two steps. First, if there exists an edge $f$ in $\cM_{t-1}'$ that corresponds to the edge $(p_{u'},c_{u'})$ in $\cN_{t-1}$, then subdivide~$f$ with a new vertex $u'$. Second, following again the order of operations in~$S$, for each edge $f_{\cD'}$ that is deleted in obtaining $\cD$ from $\cD'$, delete each non-terminal vertex of the directed path in $\cM_{t-1}'$ that corresponds to $f_{\cD'}$  
and, for each vertex that is deleted in obtaining $\cD$ from $\cD'$, delete the corresponding vertex in $\cM_{t-1}'$ and each resulting vertex that has in-degree zero and out-degree one (relative to the embedding) until no such vertex exists.
To see that $\cM_t$ is indeed an embedding of $\cD$ in $\cN_t$, recall that $\cN_t$ can be obtained from $\cN_{t-1}$ by deleting~$e$, suppressing $u$, subdividing $(p_{u'},c_{u'})$ with a new vertex $u'$, and adding a new edge~$(u',v)$. 
Since~$e_{\cD'}$ is deleted and $u_{\cD'}$ is either suppressed or deleted in each of (D1)--(D5), it now follows from the construction and the fact that $\cM_{t-1}'$ satisfies Property (iii) in the definition of a phylogenetic digraph that $\cM_t$ is an embedding of $\cD$ in $\cN_t$ and that the elements of $\cM_t$ are also pairwise vertex disjoint in $\cN_t$. Thus, Property (iii) in the definition of a phylogenetic digraph is satisfied, and $\cD$ is a phylogenetic digraph of $\cN_t$.
\end{proof}

\begin{sublemma}\label{stupid-thing}
$\cD$ is tree-child.
\end{sublemma}

\begin{proof}
Assume $\cD$ is not tree-child. Since $\cD'$ is tree-child, it follows from the construction of $\cD$ that $v'_{\cD'}$ is a reticulation in $\cD'$ and $\cD$. However, if $v'_{\cD'}$ is  a reticulation in $\cD'$, then (D4) or (D5) applies and in each case one of the reticulation edges that are directed into $v'_{\cD'}$ is deleted. Thus, $v'_{\cD'}$ is not a reticulation in $\cD$, a contradiction.
\end{proof}

\noindent It now follows from~\eqref{agreement} and~\eqref{stupid-thing} that $\cD$ is an agreement tree-child digraph for $\cN_0$ and $\cN_t$.

\begin{sublemma}\label{almost-last}
There exists an extension $\cR_t$ of $\cD$ in $\cN_t$ such that 
$$|E_{\cN_t}-E_{\cR_t}|\leq |E_{\cN_{t-1}}-E_{\cR'_{t-1}}|+2.$$
\end{sublemma}

\begin{proof}
To ease reading, we view $\cR'_{t-1}$ as a collection of edges in $\cN_{t-1}$ and describe the construction of $\cR_t$ from $\cR'_{t-1}$ by edge deletions and additions only. Let $P$ be the directed path in $\cN_{t-1}$ that $e_{\cD'}$ corresponds to. Clearly $e$ is an edge of $P$. Furthermore, let $(s,t)$ be the first edge on $P$, and let $E_s$ be the path extension of the subpath of $P$ from $s$ to $v$. Observe that, if $s=u$, then $s$ is a tree vertex in~$\cN_{t-1}$. Hence, in this case, $R_{c_u}'=R_u'$ and $(u,c_u)\in R_u'$. On the other hand, if $s\ne u$, then $(u,c_u)$ is an edge in $E_{\cN_{t-1}}-E_{\cR'_{t-1}}$. 

We next obtain $\cR_t$ from $\cR_{t-1}'$. Intuitively, we construct digraphs $R_u$ and $R_{c_u}$ from $R_u'$ and $R_{c_u}'$, respectively, such that $R_u$ and $R_{c_u}$ are extensions of elements in $\cD$ in $\cN_t$. As we will see, some of the edges in $R_u'$ and $R_{c_u}'$ that are edges of $\cM_{t-1}'$ become edges of $R_u$ and $R_{c_u}$, respectively, that are not edges of the embedding that underlies $\cR_t$. Now suppose that $\cD$ has been obtained from $\cD'$ by applying the construction as detailed in (D1) or (D2). Obtain $R_u$ and $R_{c_u}$ from $R_u'$ and $R_{c_u}'$, respectively, in one of the following four ways:
\begin{enumerate}
\item [(R1$'$)] Suppose that $s=u$ and $(p_u,u)\in R_u'$. Obtain $R_u$ from $R_u'$ by deleting $(p_u,u)$, $(u,c_u)$, and $e$,  and adding $(p_u,c_u)$. 
\item [(R2$'$)] Suppose that $s=u$ and $(p_u,u)\notin R_u'$. Obtain $R_u$ from $R_u'$ by deleting $e$ and $(u,c_u)$.
\item [(R3$'$)] Suppose that $s\ne u$ and $R_u'=R_{c_u}'$. Obtain $R_u$ from $R_u'$ by deleting $(s,t)$, $(p_u,u)$, and $e$, and  if $t\ne u$, adding $(p_u,c_u)$.
\item [(R4$'$)] Suppose that $s\ne u$ and $R_u'\ne R_{c_u}'$. First obtain $R_u$ from $R_u'$ by deleting each edge in $E_s$. Second if $t=u$, set $R_{c_u}=R_{c_u}'$. Otherwise if $t\ne u$, obtain $R_{c_u}$ from $R_{c_u}'$ by adding $(p_u,c_u)$ and adding each edge in $E_s$ except for $(s,t)$, $(p_u,u)$, and $e$.
\end{enumerate}
While (R3$'$) and (R4$'$) are similar in flavour, they are different in the sense that, if $R_u'\ne R'_{c_u}$, then certain edges in $R_u'$ that lie on a path extension of a subpath of $P$ are not edges of $R_u$ and instead get added to $R_{c_u}$. For an illustration of (R4$'$), see the left-hand side of Figure~\ref{fig:R-cases1}.

\begin{figure}[t]
    \centering
\scalebox{0.87}{\input{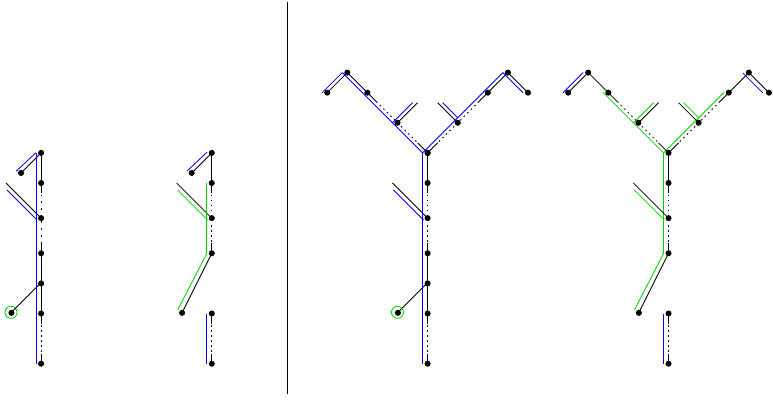_t}}
    \caption{An illustration of (R4$'$) and (R6$'$) as described in the proof of~\eqref{almost-last}. Blue indicates edges and vertices of $R_u'$ and $R_u$, and green indicates edges and vertices of $R_{c_u}'$ and $R_{c_u}$.}
    \label{fig:R-cases1}
\end{figure}

Next, suppose that $\cD$ has been obtained from $\cD'$ by applying the construction as detailed in (D3). As $u_{\cD'}$ has in-degree two and corresponds to $s$ in $\cN_{t-1}$, observe that $s\ne u$. Let $P_1$ (resp.\ $P'_1$) be the directed path in $\cN_{t-1}$ that the edge $(p_{\cD'},u_{\cD'})$ (resp.\ $(p'_{\cD'},u_{\cD'})$) in $\cD'$ corresponds to. Furthermore, let $(s_1,t_1)$ (resp.\ $(s_1',t_1')$) be the first edge on $P_1$ (resp.\ $P_1'$). Since $\cD'$ is tree-child, neither $s_1$ nor~$s_1'$ is a reticulation in $\cN_{t-1}$. Similar to the definition of $E_s$, let $E_1$ (resp.\ $E_1'$) be the path extension of $P_1$ (resp.\ $P_1'$). Finally, obtain $R_u$ and $R_{c_u}$ from $R_u'$ and $R_{c_u}'$, respectively, in one of the following two ways:
\begin{enumerate}
\item [(R5$'$)] Suppose that $R_u'=R_{c_u}'$. Obtain $R_u$ from $R_u'$ by deleting $(s_1,t_1)$,  $(s_1',t_1')$, $(p_u,u)$, and $e$, and adding $(p_u,c_u)$. 
\item [(R6$'$)] Suppose that $R_u'\ne R_{c_u}'$. First obtain $R_u$ from $R_u'$ by deleting each edge in $E_1$, $E_1'$, and $E_s$. Second obtain $R_{c_u}$ from $R_{c_u}'$ by adding each edge  in $E_1$ except for $(s_1,t_1)$, adding each edge in $E_1'$ except for $(s_1',t_1')$, adding each edge in $E_s$ except for $(p_u,u)$ and $e$, and adding $(p_u,c_u)$. The construction is shown on the right-hand side of Figure~\ref{fig:R-cases1}.
\end{enumerate}

Lastly, suppose that $\cD$ has been obtained from $\cD'$ by applying the construction as detailed in (D4) or (D5). Let $Q_1$ (resp.\ $Q'_1$) be the directed path in $\cN_{t-1}$ that the edge $(u_{\cD'},v'_{\cD'})$ (resp.\ $(p''_{\cD'},v'_{\cD'})$) in $\cD'$ corresponds to. Furthermore, let $(s_1,t_1)$ (resp.\ $(s_1',t_1')$) be the first edge on $Q_1$ (resp.\ $Q_1'$). Note that $s_1=s$ and, if $s=u$, then $c_u=t_1$. Say first that $\cD$ has been obtained from $\cD'$ by applying the construction as detailed in (D4). Let $F$ be the subset of edges of $\cN_{t-1}$ that lie on a directed path of  $R_u'$  that ends at $s$. Observe that each edge in $F$ is contained in $E_{\cR'_{t-1}}-E_{\cM'_{t-1}}$. Now obtain $R_u$ and $R_{c_u}$ from $R_u'$ and $R_{c_u}'$, respectively, by applying one of (R1$'$) and (R2$'$) if $s=u$, or by applying one of the following two ways if $s\ne u$:
\begin{enumerate}
\item [(R7$'$)]Suppose that $s\ne u$ and $R_u'= R_{c_u}'$. Obtain $R_u$ from $R_u'$ by deleting $(s_1,t_1)$, $(p_u,u)$, and $e$, and adding the edge $(p_u,c_u)$. 
\item  [(R8$'$)]Suppose that $s\ne u$ and $R_u'\ne R_{c_u}'$. First obtain $R_u$ from $R_u'$ by deleting each edge in $E_s$ and $F$, and deleting $(s_1,t_1)$. Second obtain $R_{c_u}$ from $R_{c_u}'$ by adding each edge in $E_s$ except for $(p_u,u)$ and $e$, adding $(p_u,c_u)$, and adding each edge in $F$. See the left-hand side of Figure~\ref{fig:R-cases2} for an illustration.
\end{enumerate}

\begin{figure}[t]
    \centering
\scalebox{0.87}{\input{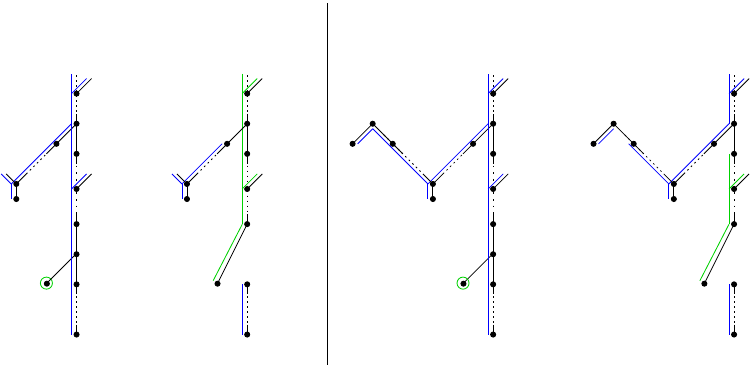_t}}
    \caption{An illustration of (R8$'$) and (R11$'$) as described in the proof of~\eqref{almost-last}. Blue indicates edges and vertices of $R_u'$ and~$R_u$, and green indicates edges and vertices of $R_{c_u}'$ and $R_{c_u}$.}
    \label{fig:R-cases2}
\end{figure}

\noindent On the other hand, if $\cD$ has been obtained from $\cD'$ by applying the construction as detailed in (D5), then obtain $R_u$ and $R_{c_u}$ from $R_u'$ and $R_{c_u}'$, respectively, in one of the following three ways:
\begin{enumerate}
\item  [(R9$'$)]Suppose that $s=u$. Obtain $R_u$ from $R_u'$ by deleting $(s_1',t_1')$, $(p_u,u)$, $(u,c_u)$, and $e$, and adding $(p_u,c_u)$.
\item  [(R10$'$)]Suppose that $s\ne u$ and $R_u'=R_{c_u}'$. Obtain $R_u$ from $R_u'$ by deleting $(s_1',t_1')$, $(s,t)$, $(p_u,u)$, and $e$, and, if $t\ne u$, then adding $(p_u,c_u)$.
\item [(R11$'$)] Suppose that $s\ne u$ and $R_u'\ne R_{c_u}'$. First obtain $R_u$ from $R_u'$ by deleting $(s_1',t_1')$ and each edge in $E_s$. Second, if $t=u$, set $R_{c_u}=R_{c_u}'$. Otherwise, if $t\ne u$  obtain $R_{c_u}$ from $R_{c_u}'$ by adding each edge in $E_s$ except for $(s,t)$, $(p_u,u)$, and $e$, and adding $(p_u,c_u)$. See the right-hand side of Figure~\ref{fig:R-cases2} for an illustration.
\end{enumerate}

Finally, let $\cR_t=(\cR_{t-1}'-\{R_u',R_{c_u}',R_{c_{u'}}'\})\cup\{R_u,R_{c_u},R_{c_{u'}}\}$. Since $\cR_{t-1}'$ is an extension of $\cD'$ in $\cN_{t-1}$, a careful check shows that $\cR_t$ is an extension of $\cD$ in $\cN_t$.

Now, let $C'=E_{\cN_{t-1}}-E_{\cR'_{t-1}}$, and let $C=E_{\cN_t}-E_{\cR_t}$. To complete the proof of~\eqref{almost-last}, we compare the number of edges in $C'$ with the number of edges in~$C$. First, observe that, if $(p_{u'},c_{u'})\in C'$, then  $(p_{u'},u')\in C$ and $(u',c_{u'})\notin C$. Furthermore, if $(p_{u'},c_{u'})\notin C'$, then neither $(p_{u'},u')$ nor $(u',c_{u'})$ is in $C$. We next list the edges that are in $C'$ but not in $C$ and vice versa for each of (R1$'$)--(R11$'$). While  $C'-C$ contains edges in  $\cN_{t-1}$ that are not edges in $\cN_t$, the set $C-C'$ contains edges in $\cN_t$ that are not edges in $\cN_{t-1}$. Thus, edges that are common to $\cN_{t-1}$ and $\cN_t$ and common to $C'$ and $C$ are not considered in the following table. Moreover, regardless of which of (R1$'$)--(R11$'$) applies, we note that $C'-C$ may or may not contain $(p_{u'}, c_{u'})$ and $C-C'$ may or may not contain $(p_{u'}, u')$. However, $C'-C$ contains $(p_{u'}, c_{u'})$ if and only if $C-C'$ contains $(p_{u'}, u')$, and so we have also omitted in the table the possibility that $C'-C$ may contain $(p_{u'}, c_{u'})$ and the possibility that $C-C'$ may contain $(p_{u'}, u')$.

\begin{center}
\renewcommand{\arraystretch}{1.25}
\begin{tabular}{p{85pt}|p{110pt}| p{120pt}}
& $(C’-C)-\{(p_{u'}, c_{u'})\}$ & $(C-C’)-\{(p_{u'}, u')\}$\\ \midrule\midrule
(R1$'$) & empty & $(u',v)$ \\ 
(R2$'$) & $(p_u,u)$ & $(p_u,c_u)$, $(u',v)$ \\ 
(R3$'$) and $t=u$ & $(u,c_u)$ & $(p_u,c_u)$, $(u',v)$ \\
(R3$'$) and $t\ne u$ & $(u,c_u)$ & $(s,t)$, $(u',v)$ \\
(R4$'$) and $t=u$ & $(u,c_u)$ & $(p_u,c_u)$, $(u',v)$ \\
(R4$'$) and $t\ne u$ & $(u,c_u)$ & $(s,t)$, $(u',v)$ \\ 
(R5$'$) and (R6$'$) & $(u,c_u)$ & $(s_1,t_1)$, $(s_1',t_1')$,  $(u',v)$ \\ 
(R7$'$) and (R8$'$) & $ (u,c_u)$ & $(s_1,t_1)$, $(u',v)$ \\ 
(R9$'$) & empty & $(s_1',t_1')$, $(u',v)$ \\ 
(R10$'$) and $t=u$ & $(u,c_u)$ & $(s_1',t_1')$, $(p_u,c_u)$, $(u',v)$ \\ 
(R10$'$) and $t\ne u$ & $(u,c_u)$ & $(s_1',t_1')$, $(s,t)$, $(u',v)$ \\ 
(R11$'$) and $t=u$ & $(u,c_u)$ & $(s_1',t_1')$, $(p_u,c_u)$, $(u',v)$ \\
(R11$'$) and $t\ne u$ & $(u,c_u)$ & $(s_1',t_1')$, $(s,t)$, $(u',v)$ \\ 
\end{tabular}
\end{center}
\noindent Since $ |C-C'|\leq |C'-C|+2$ in all cases, this completes the proof of~\eqref{almost-last}.
\end{proof}

\begin{sublemma}\label{last}
There exists an extension $\cR_0$ of $\cD$ in $\cN_0$ such that 
$$|E_{\cN_0}-E_{\cR_0}|\leq |E_{\cN_{0}}-E_{\cR'_{0}}|+2.$$
\end{sublemma}

\begin{proof}
Again, to ease reading, we view $\cR'_{0}$ simply as a collection of edges in $\cN_{0}$ and describe the construction of $\cR_0$ from $\cR'_{0}$ by edge deletions only. Let $P$ be the directed path in $\cN_0$ that $e_{\cD'}$ corresponds to, and let $(s,t)$ be the first edge on~$P$. Let $R_s'$ be the element in $\cR_0'$ that contains $s$. We next construct an extension $\cR_0$ of $\cD$ in $\cN_0$ by modifying $R_s'$. This construction is similar to the constructions described in proof of~\eqref{almost-last}, but much less involved. First, suppose that $\cD$ has been obtained from $\cD'$ by applying (D1), (D2), or (D4). Then
\begin{enumerate}
\item [(R1$''$)] obtain $R_s$ from $R_s'$ by deleting $(s,t)$.
\end{enumerate}
Second, suppose that $\cD$ has been obtained from $\cD'$ by applying (D3). Recall that $s$ is a reticulation.  Let $P_1$ (resp.\ $P'_1$) be the directed path in $\cN_{0}$ that the edge $(p_{\cD'},u_{\cD'})$ (resp.\ $(p'_{\cD'},u_{\cD'})$) in $\cD'$ corresponds to. Furthermore, let $(s_1,t_1)$ (resp.\ $(s_1',t_1')$) be the first edge on $P_1$ (resp.\ $P_1'$). Then
\begin{enumerate}
\item [(R2$''$)]obtain $R_s$ from $R_s'$ by deleting $(s_1,t_1)$ and $(s_1',t_1')$. 
\end{enumerate}
Third, suppose that $\cD$ has been obtained from $\cD'$ by applying (D5). Let $P_1$  be the directed path in $\cN_{0}$ that the edge $(p''_{\cD'},v'_{\cD'})$ in $\cD'$ corresponds to. Furthermore, let $(s_1,t_1)$ be the first edge on $P_1$. Then
\begin{enumerate}
\item [(R3$''$)]obtain $R_s$ from $R_s'$ by deleting $(s,t)$ and $(s_1,t_1)$. 
\end{enumerate}
Now, let $\cR_0=(\cR_{0}'-\{R_s'\})\cup\{R_s\}$. Since $\cR_{0}'$ is an extension of $\cD'$ in $\cN_{0}$, a careful check shows that $\cR_0$ is an extension of $\cD$ in $\cN_0$. Moreover, since each of \mbox{(R1$''$)--(R3$''$)} deletes at most two edges in obtaining $R_s$ from $R_s'$, it follows that~\eqref{last} holds.
\end{proof}

Finally, by combining Equation~\eqref{induction} with~\eqref{agreement}and~\eqref{last}, we get
\begin{eqnarray}
\frac 1 2 m_\tc(\cN,\cN')&\leq&\frac 1 2 (|E_{\cN_0}-E_{\cR_0}|+ |E_{\cN_{t}}-E_{\cR_{t}}|)\nonumber\\
&\leq&\frac 1 2 (|E_{\cN_0}-E_{\cR'_0}|+2+ |E_{\cN_{t-1}}-E_{\cR'_{t-1}}|+2)\nonumber\\
&\leq&\frac 1 2 (|E_{\cN_0}-E_{\cR'_0}|+ |E_{\cN_{t-1}}-E_{\cR'_{t-1}}|)+\frac 1 2\cdot 4\nonumber\\
&\leq& w(\sigma_1)+w(\sigma_2)\nonumber\\
&=&w(\sigma).\nonumber
\end{eqnarray}
This completes the proof of the lemma.
\end{proof}

\noindent {\it Proof of Theorem~\ref{t:main}.} The theorem follows from Lemmas~\ref{l:dtc<mtc} and~\ref{l:mtc<dtc}.\qed

The following result shows how the rSPR distance between two phylogenetic trees can be computed exactly within the framework of agreement digraphs. In particular, it shows that agreement digraphs generalise agreement forest.

\begin{proposition}\label{prop:rspr}
Let $\cT$ and $\cT'$ be two phylogenetic $X$-trees. Then $$d_\rSPR(\cT,\cT')=\frac 1 2 d_\tc(\cT,\cT')=\frac 1 2 m_\tc(\cT,\cT').$$
\end{proposition}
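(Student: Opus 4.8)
The plan is to establish the two claimed equalities separately, in each case reducing to known facts about agreement forests and to Theorem~\ref{t:main}. The second equality, $\frac 1 2 d_\tc(\cT,\cT') = \frac 1 2 m_\tc(\cT,\cT')$, is immediate from Theorem~\ref{t:main}: when $\cN=\cT$ and $\cN'=\cT'$ are phylogenetic $X$-trees (hence tree-child with no reticulations), Theorem~\ref{t:main} gives $\frac 1 2 m_\tc(\cT,\cT') \le d_\tc(\cT,\cT') \le m_\tc(\cT,\cT')$, so it suffices to show $d_\tc(\cT,\cT') = m_\tc(\cT,\cT')$. For this I would argue that $m_\tc(\cT,\cT')$ is even: since $\cT$ and $\cT'$ have no reticulation edges and an agreement tree-child digraph $\cD$ has no reticulations either (any reticulation in $\cD$ would need to be displayed in $\cT$, a tree), every edge of $E_\cN - E_\cR$ and $E_{\cN'}-E_{\cR'}$ sits in a directed forest whose components are rooted trees; a parity/counting argument on these components shows $c_\cD$ and $c'_\cD$ must have the same parity, and in fact a shortest tree-child SNPR sequence between two trees uses only SNPR$^\pm$ moves (each weighted two), forcing $d_\tc(\cT,\cT')$ to be even and equal to $m_\tc(\cT,\cT')$.

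First I would nail down the key structural fact: for two phylogenetic $X$-trees, a tree-child SNPR sequence of minimum weight can be taken to consist entirely of SNPR$^\pm$ operations. The point is that an SNPR$^-$ can only be applied to a network containing a reticulation, and symmetrically SNPR$^+$ creates one; in a minimum-weight sequence between two trees, any SNPR$^+$ creating a reticulation must eventually be undone. Using Lemma~\ref{l:swap} and Corollary~\ref{c:nice-seq} together with the global assumption at the end of Section~\ref{sec:measures}, I can push all SNPR$^-$ moves to one end and all SNPR$^+$ moves to the other; but the two networks at the ends are trees, so there can be no net change in reticulation number, and a careful bookkeeping of Corollary~\ref{c:nice-seq} forces the sequence to be purely SNPR$^\pm$. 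Hence $d_\tc(\cT,\cT') = 2k$ where $k$ is the minimum number of rSPR operations (recall the excerpt notes rSPR is exactly SNPR$^\pm$ applied to trees), i.e.\ $d_\tc(\cT,\cT') = 2\, d_\rSPR(\cT,\cT')$, which is the first equality.

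For the combinatorial half — identifying $\frac 1 2 m_\tc(\cT,\cT')$ with $d_\rSPR(\cT,\cT')$ — I would invoke the classical characterisation that $d_\rSPR(\cT,\cT')$ equals $|F| - 1$ where $F$ is a maximum agreement forest of $\cT$ and $\cT'$ (Bordewich--Semple~\cite{bordewich05}), and then show that an agreement tree-child digraph $\cD$ of two trees is, up to the singleton $\rho$-component, exactly an agreement forest, and that its cut size in $\cT$ counts precisely the edges cut to produce that forest. Concretely: since $\cD$ has no reticulations, each $D_i$ is a phylogenetic tree on a block of the partition of $X$; the embeddings $\cM, \cM'$ are vertex-disjoint subtrees; and the edges of $E_\cT - E_\cR$ are exactly the "connecting" edges of $\cT$ that, when removed and followed by suppression, break $\cT$ into the forest realised by $\cD$. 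Counting these shows $c_\cD = |\cD| - 1 = c'_\cD$ (using that a root extension reaches every vertex, Lemma~\ref{l:root-ext-basic}), so $m_\tc(\cT,\cT') = 2(|F_{\min}| - 1) = 2\, d_\rSPR(\cT,\cT')$, giving $\frac 1 2 m_\tc(\cT,\cT') = d_\rSPR(\cT,\cT')$. Combining the two equalities completes the proof.

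The main obstacle I anticipate is the precise correspondence between agreement tree-child digraphs and agreement forests, specifically verifying that $c_\cD = c'_\cD = |\cD| - 1$ for trees. One must check that no edges of $\cT$ are "wasted": in a root extension of $\cD$ in a tree, each non-singleton component $D_i$ whose root in $\cD$ has in-degree zero acquires a root path, and the edge entering the top of that root path is the unique cut edge associated to $D_i$; the $\rho$-component contributes none. Ensuring this count is exactly $|\cD|-1$ — neither more (no redundant cuts) nor less (every component except one needs exactly one connecting edge) — and that it matches the standard agreement-forest size, is the delicate bookkeeping step, though it is ultimately a routine consequence of Lemmas~\ref{l:root-ext-basic} and~\ref{l:root-ext-tree-vertex} specialised to the reticulation-free setting.
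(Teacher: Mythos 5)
Your overall strategy is sound and, for the identity $\frac 1 2 m_\tc(\cT,\cT')=d_\rSPR(\cT,\cT')$, genuinely different from the paper's. The paper obtains $d_\tc(\cT,\cT')\le m_\tc(\cT,\cT')$ from Lemma~\ref{l:dtc<mtc} and then re-runs the induction of Lemma~\ref{l:mtc<dtc} in the reticulation-free setting, observing that only case (D1) (and hence (R1$'$)--(R4$'$) and (R1$''$)) can occur, so that each SNPR$^\pm$ costs only $1+1$ rather than $2+2$ across the two cut sizes; this yields $m_\tc\le d_\tc$ and hence $d_\tc=m_\tc$. You instead identify agreement tree-child digraphs of two trees with classical agreement forests and compute the cut size directly: since every vertex of $\cT$ lies in an extension (Lemma~\ref{l:root-ext-basic}(ii)) and the $k+1$ components are vertex-disjoint subtrees, $c_\cD=(|V|-1)-\bigl(|V|-(k+1)\bigr)=k$, so $c_\cD+c'_\cD=2k$ and minimising it is exactly the maximum-agreement-forest problem, whence $m_\tc=2d_\rSPR$ by \cite{bordewich05}. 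That is a clean, self-contained argument which avoids re-inspecting the long induction; the paper's route has the advantage of not invoking the external agreement-forest characterisation. Your first equality matches the paper's, which simply cites \cite[Proposition~7.1]{bordewich17}; your sketch of why a minimum-weight tree-child sequence between two trees can be taken to be purely SNPR$^\pm$ (the reticulation count returns to zero, so adjacent SNPR$^+$/SNPR$^-$ pairs can be cancelled or merged via Lemma~\ref{l:swap}) is the right idea but would need the bookkeeping made precise.

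One step as written does not work: in your first paragraph you claim that evenness of $d_\tc$ and $m_\tc$, together with the sandwich $\frac 1 2 m_\tc\le d_\tc\le m_\tc$ from Theorem~\ref{t:main}, forces $d_\tc=m_\tc$. It does not: $d_\tc=6$ and $m_\tc=8$ satisfy all of these constraints, and Figure~\ref{fig:counterexample} shows that such a gap genuinely occurs for networks, so nothing short of a tree-specific argument can close it. Fortunately this paragraph is redundant: once your second and third paragraphs establish $d_\tc=2d_\rSPR$ and $m_\tc=2d_\rSPR$, all the claimed equalities follow, so the parity argument should simply be deleted.
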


\begin{proof}
The first equality follows from Bordewich et al.~\cite[Proposition 7.1]{bordewich17} and the fact that each SNPR$^\pm$ contributes two to the weight of any SNPR sequence connecting $\cT$ and $\cT'$. Moreover, to establish the second equality, let $$\sigma=(\cT=\cT_0,\cT_1,\cT_2,\ldots,\cT_t=\cT')$$ be an SNPR sequence connecting $\cT$ and $\cT'$. Then it follows from Lemma~\ref{l:dtc<mtc} and a careful inspection of the proof of Lemma~\ref{l:mtc<dtc} when applied to two phylogenetic $X$-trees that (D1) and, consequently, (R1$'$)--(R4$'$) and (R1$'$) always apply. Hence, the last set of inequalities in the proof of Lemma~\ref{l:mtc<dtc} can be replaced with
\begin{eqnarray}
m_\tc(\cT,\cT')&\leq& |E_{\cT_0}-E_{\cR_0}|+ |E_{\cT_{t}}-E_{\cR_{t}}|\nonumber\\
&\leq&|E_{\cT_0}-E_{\cR'_0}|+1+ |E_{\cT_{t-1}}-E_{\cR'_{t-1}}|+1\nonumber\\
&\leq& w(\sigma_1)+w(\sigma_2)\nonumber\\
&=&w(\sigma),\nonumber
\end{eqnarray} 
where $\sigma_1=(\cT_0, \cT_1, \cT_2, \ldots, \cT_{t-1})$ and $\sigma_2=(\cT_{t-1}, \cT_t)$.
\end{proof}

The next proposition shows that the bound given in Lemma~\ref{l:mtc<dtc} is essentially tight.

\begin{figure}[t]
\center
\scalebox{0.87}{\input{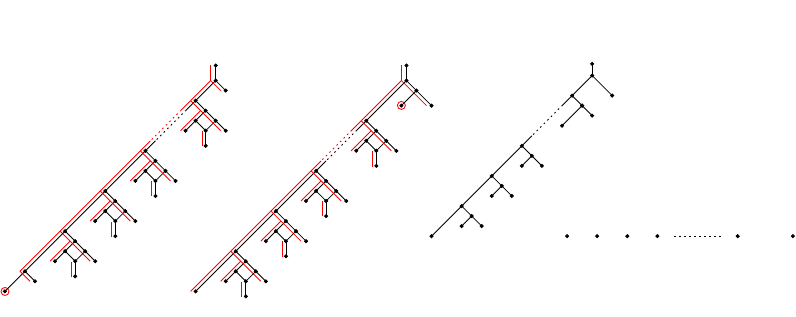_t}}
\caption{Two phylogenetic networks $\cN_\ell$ and $\cN_\ell'$ on $n=3\ell$ leaves with $\ell>1$, an agreement tree-child digraph $\cD_\ell$ for $\cN_\ell$ and~$\cN_\ell'$, an extension of $\cD_\ell$ in $\cN_\ell$  and an extension of $\cD_\ell$ in $\cN_\ell'$ indicated in red.\ This example shows that the bound given in Lemma~\ref{l:mtc<dtc} is essentially tight. For details, see the proof of Proposition~\ref{prop:sharp}.}
\label{fig:sharp}
\end{figure}

\begin{proposition}\label{prop:sharp}
For any  integer $\ell$ with $\ell>1$, there exist two tree-child networks $\cN_\ell$ and $\cN'_\ell$ on $3\ell$ leaves such that $\frac 1 2 m_\tc(\cN_\ell,\cN'_\ell)+1=d_\tc(\cN_\ell,\cN'_\ell)$.
\end{proposition}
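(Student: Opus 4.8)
The plan is to exhibit an explicit family $\cN_\ell, \cN'_\ell$ (those drawn in Figure~\ref{fig:sharp}) realising the gap, show $d_\tc(\cN_\ell,\cN'_\ell)$ is large by a matching-lower-bound argument, and show $m_\tc(\cN_\ell,\cN'_\ell)$ is exactly $2(d_\tc-1)$ by identifying an optimal agreement tree-child digraph. First I would read off from the figure that $\cN_\ell$ and $\cN'_\ell$ each have $\ell$ reticulations and are built as a "caterpillar-of-cherries-plus-reticulations" on $n=3\ell$ leaves, with the two networks differing in a way that forces every common embeddable piece to be small. I would verify that both networks are tree-child (no stacks, no sibling reticulations, no parallel edges), using the local structure shown.

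The upper bound $d_\tc(\cN_\ell,\cN'_\ell)\le \tfrac12 m_\tc+1$ comes from Lemma~\ref{l:mtc<dtc}; indeed that lemma already gives $\tfrac12 m_\tc\le d_\tc$, so what must be shown is the reverse, $d_\tc\le \tfrac12 m_\tc+1$, for this specific family. For that I would exhibit an explicit tree-child SNPR sequence connecting $\cN_\ell$ and $\cN'_\ell$ of weight exactly $\tfrac12 m_\tc+1$, and separately compute $m_\tc$ via the digraph $\cD_\ell$ shown in the figure: check $\cD_\ell$ is an agreement tree-child digraph for $\cN_\ell$ and $\cN'_\ell$, compute its cut sizes $c_{\cD_\ell}$ and $c'_{\cD_\ell}$ from the indicated red extensions, and then argue no agreement tree-child digraph does better — the latter being a structural argument that any component common to both networks that is "too large" cannot be embedded vertex-disjointly in both because of the mismatch in how the leaves $1,2,\dots$ are arranged relative to the reticulations. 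Combining $d_\tc\le \tfrac12 m_\tc+1$ (from the explicit sequence, which must be shown optimal, i.e.\ no shorter-weight sequence exists) with $\tfrac12 m_\tc \le d_\tc$ from Lemma~\ref{l:mtc<dtc}, and the fact that $m_\tc$ is even here, pins down $d_\tc = \tfrac12 m_\tc + 1$.

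Concretely I expect the computation to give $m_\tc(\cN_\ell,\cN'_\ell) = 2(2\ell-1)$ or a similar linear-in-$\ell$ value, with $d_\tc = 2\ell$ (or the corresponding shifted value), the discrepancy of $1$ arising because the optimal SNPR sequence can "save" one operation's worth of weight by performing a single SNPR$^+$ (weight $1$) rather than being forced into SNPR$^\pm$ operations (weight $2$) throughout — mirroring the slack between Lemmas~\ref{l:dtc<mtc} and~\ref{l:mtc<dtc} and the base case $t=1$ analysis in the proof of Lemma~\ref{l:mtc<dtc}, where an SNPR$^+$ contributes $1$ to $w(\sigma)$ but up to $1$ to $\tfrac12(\text{cut sizes})$. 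The main obstacle will be the lower bound on $d_\tc$: showing that no tree-child SNPR sequence of smaller weight connects $\cN_\ell$ and $\cN'_\ell$. For this I would appeal to $\tfrac12 m_\tc(\cN_\ell,\cN'_\ell) \le d_\tc(\cN_\ell,\cN'_\ell)$ together with a careful lower bound on $m_\tc$ — i.e.\ proving that $\cD_\ell$ is in fact a \emph{maximum} agreement tree-child digraph. That optimality argument is where the real work lies: one must show that any agreement tree-child digraph $\cD$ for $\cN_\ell$ and $\cN'_\ell$ has $c_\cD + c'_\cD \ge 2(2\ell-1)$, presumably by a counting argument on the reticulations and the "displaced" leaves, exploiting that a large common component would have to span incompatible cherry structures. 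The remaining pieces — tree-child verification, acyclicity of $\cD_\ell$, reading off cut sizes, and writing down the explicit weight-$(\tfrac12 m_\tc+1)$ sequence — are routine given the figure.
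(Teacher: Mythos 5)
Your overall strategy matches the paper's: take the family in Figure~\ref{fig:sharp}, show $m_\tc(\cN_\ell,\cN'_\ell)=4\ell-2$ by proving $\cD_\ell$ optimal, exhibit an explicit sequence giving $d_\tc(\cN_\ell,\cN'_\ell)\le 2\ell$, and combine with Lemma~\ref{l:mtc<dtc}. But there is a genuine gap at the final step. Lemma~\ref{l:mtc<dtc} plus the explicit sequence only give $2\ell-1=\frac12 m_\tc(\cN_\ell,\cN'_\ell)\le d_\tc(\cN_\ell,\cN'_\ell)\le 2\ell$, so you still must rule out $d_\tc(\cN_\ell,\cN'_\ell)=2\ell-1$. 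You claim this is pinned down by ``the fact that $m_\tc$ is even here,'' but that is the wrong parity: $m_\tc=4\ell-2$ being even makes $\frac12 m_\tc=2\ell-1$ an integer and hence a perfectly admissible value of $d_\tc$. The parity you need is that of $d_\tc$ itself. The paper gets it by observing that $\cN_\ell$ and $\cN'_\ell$ have the same number of reticulations ($\ell-1$ each), so any connecting SNPR sequence has $t^+=t^-$, whence its weight $2t^\pm+t^++t^-$ is even; therefore $d_\tc$ is even and cannot equal $2\ell-1$. Your fallback of ``showing the explicit sequence is optimal'' directly would require exactly such an argument, which you do not supply.

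Secondarily, the optimality of $\cD_\ell$ (i.e.\ $m_\tc\ge 4\ell-2$) is the other substantial piece of work, and your proposal only gestures at ``a counting argument on the reticulations and the displaced leaves.'' The paper's argument is concrete: for each $j$ the network $\cN_\ell$ displays the triples $(3j,3j+1,3j+2)$ and $(3j+1,3j+2,3j)$ while $\cN'_\ell$ displays only $(3j,3j+2,3j+1)$, which forces any cheaper agreement digraph to contain a component with a reticulation, and a case analysis on how that reticulation's in-edges are rooted shows no such component embeds in both networks. Without some argument of this kind the claimed value of $m_\tc$ is unsupported, and the whole computation collapses.
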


\begin{proof}
Let $\ell$ be an integer with $\ell>1$. Consider the two tree-child networks $\cN_\ell$ and~$\cN'_\ell$ that are shown in Figure~\ref{fig:sharp}. Each of $\cN_\ell$ and $\cN'_\ell$ has $3\ell$ leaves. Moreover, the agreement tree-child digraph $\cD_\ell$ for $\cN_\ell$ and $\cN'_\ell$ that is also shown in Figure~\ref{fig:sharp} has cut size  $2\ell-1$ in each of $\cN_\ell$ and $\cN_\ell'$. Thus, $m_\tc(\cN,\cN')\leq 4\ell-2$. We now show that $m_{\rm tc}(\cN, \cN')=4\ell-2$. Assume that  $m_\tc(\cN,\cN')< 4\ell-2$. Then there exists an agreement tree-child digraph $\cD_\ell^*=\{D_\rho,D_1,D_2,\ldots,D_k\}$ whose cut size in  $\cN_\ell$ or $\cN'_\ell$ is strictly less than $2\ell-1$. Since, for each $j\in\{1,2,\ldots,\ell-1\}$, $\cN_\ell$ displays the two triples $(3j,3j+1,3j+2)$ and $(3j+1,3j+2,3j)$ whereas $\cN_\ell'$ only displays the triple $(3j,3j+2,3j+1)$ and no other triple involving $3j$, $3j+1$, and $3j+2$, a careful check shows that $\cD^*_\ell$ contains an element that is not a phylogenetic tree. 
To see this, note that if $\cD^*_{\ell}$ only consists of phylogenetic trees, then each $j\in\{1,2,\ldots,\ell-1\}$, $\cN_\ell$ contributes two to the cut size of $\cD^*_{\ell}$ in $\cN_{\ell}$ and two to the cut size of $\cD^*_{\ell}$ in $\cN_{\ell}'$.
Thus, there exists an element $D_i$ in $\cD^*_\ell$ for some $i\in\{\rho,1,2,\ldots,k\}$ that contains a vertex $v$ of in-degree two and out-degree one. Moreover, as $\cD^*_{\ell}$ is an agreement digraph of $\cN_{\ell}$ and $\cN'_{\ell}$, the child of $v$ is $j$ for some $j\in\{4,7,\ldots,3\ell-5\}$. First, assume that there exists a vertex $u$ in $D_i$ and two edge-disjoint directed paths $P$ and $P'$ from $u$ to $v$. Since $\cD^*_\ell$ is tree-child, at least one of $P$ and $P'$ contains a vertex $w$ such that the edge $(w,v)$ with $w\ne u$ exists. Furthermore, as $D_i$ can be embedded in $\cN_\ell$ and $P$ and $P'$ are edge disjoint, we may assume, that the child of $w$ that is not $v$ is $j-1$ or $j+1$. In either case, it is easily seen that there is no embedding of $D_i$ in $\cN_\ell'$, a contradiction. Thus, we may assume that there exist two vertices $u$ and $u'$ with in-degree zero in $D_i$ and  directed paths from each of $u$ and $u'$ to $v$ whose only common vertex is $v$. As $D_i$ can be embedded in $\cN_{\ell}$, we may assume without loss of generality that  the child of $u$ is $j+1$ or $j-1$ which leads us to the same contradiction as in the previous case because there is no such embedding of $D_i$ in $\cN'_{\ell}$.
Hence, there exists no agreement tree-child digraph whose cut size in  $\cN_\ell$ or $\cN'_\ell$ is strictly less than $2\ell-1$. It now follows that 
\begin{equation}\label{eq:sharp-one}
m_\tc(\cN,\cN')=4\ell-2.
\end{equation}

Turning to $d_\tc(\cN_\ell,\cN'_\ell)$, observe that there exists a tree-child SNPR sequence connecting $\cN_\ell$ and $\cN'_\ell$ that prunes and regrafts the leaves $1,4,7,\ldots,3\ell-2$ in order. Hence, 
\begin{equation}\label{eq:sharp-two}
d_\tc(\cN_\ell,\cN'_\ell)\leq2\ell.
\end{equation}

By combining Lemma~\ref{l:mtc<dtc} with Equations~\eqref{eq:sharp-one} and~\eqref{eq:sharp-two}, we have
$$2\ell-1=\frac 1 2 m_\tc(\cN_\ell,\cN_\ell')\leq d_\tc(\cN_\ell,\cN'_\ell)\leq 2\ell$$
which, in turn, implies that $d_\tc(\cN_\ell,\cN'_\ell)\in\{2\ell-1,2\ell\}$. Since each of $\cN_\ell$ and $\cN'_\ell$ has $\ell-1$ reticulations, the weight of any SNPR sequence connecting $\cN_\ell$ and $\cN'_\ell$ is even. Thus, $d_\tc(\cN_\ell,\cN'_\ell)=2\ell$, thereby establishing the proposition.
\end{proof}

\section{Concluding remarks}\label{sec:conclusion}

In this paper, we have taken a step towards approximating the tree-child SNPR distance $d_\tc(\cN,\cN')$ between two tree-child networks $\cN$ and $\cN'$. By introducing phylogenetic digraphs and their extensions, thereby generalising agreement forests for two phylogenetic trees to two phylogenetic networks, we have shown that $d_\tc(\cN,\cN')$ is tightly bounded from above and below within small constant factors of $m_\tc(\cN,\cN')$. A possible next step is the development of an algorithm to compute $m_\tc(\cN,\cN')$. Due to the intricacies of phylogenetic digraphs and their embeddings, this is a major challenge. In addition, it immediately follows from Proposition~\ref{prop:rspr} and the NP-hardness of computing the rSPR distance between two phylogenetic trees $\cT$ and $\cT'$~\cite{bordewich05} that computing $m_\tc(\cN,\cN')$ is also NP-hard. Since it seems natural to assume that any algorithm for computing $m_\tc(\cN,\cN')$  needs to repeatedly compute cut sizes, it would be interesting to investigate if the cut size of a given phylogenetic digraph for a phylogenetic network $\cN$ can be computed efficiently. In a different direction, the development of reductions and divide-and-conquer strategies for computing $m_\tc(\cN,\cN')$ is another avenue for future research. For example, in the context of phylogenetic trees, the introduction of the subtree and chain reduction has led to fixed-parameter tractable algorithms for computing the rSPR distance~\cite{bordewich05}. Do similar reductions exist for phylogenetic networks?

As mentioned in the introduction, we use two different weights for SNPR operations to make our approach work. Specifically, each SNPR$^+$ and SNPR$^-$ is weighted one and each SNPR$^\pm$ is weighted two. While these weights differ from the  uniform weights that are used for computing the rSPR distance between two phylogenetic trees, they are a consequence of how our generalisation from agreement forests to agreement digraphs and their embeddings works. Without going into detail, given two phylogenetic trees $\cT$ and $\cT'$ with $d_\rSPR(\cT,\cT')=k$, there exists an agreement forest $\cF$ for $\cT$ and $\cT'$ with $k+1$ components. Moreover, one can obtain $\cF$ from  $\cT$ (resp.\ $\cT'$) by deleting $k$ edges in $\cT$ (resp.\ $\cT'$) and suppressing vertices with in-degree one and out-degree one after each edge deletion. Intuitively, each rSPR operation is witnessed by an edge in $\cT$ and by an edge in $\cT'$. In the language of this paper, any agreement forest $\cF$ for $\cT$ and $\cT'$ has the property that its cut size in $\cT$ is equal to its cut size in $\cT'$ and, thus, $d_\rSPR(\cT,\cT')$ simply equates to the cut size of $\cF$ in one of the two trees.\ Now consider two tree-child networks $\cN$ and $\cN'$ such that $\cN'$ can be obtained from $\cN$ by a sequence $\sigma$ of only SNPR$^+$ operations. Suppose that the length of $\sigma$ is $k$. In this case, $\cN$ is an agreement tree-child digraph $\cD$ for $\cN$ and $\cN'$, the cut size of $\cD$ in $\cN$ is zero, and the cut size of $\cD$ in $\cN'$ is $k$.  More generally, for an arbitrary tree-child SNPR sequence that connects two tree-child networks $\cN$ and $\cN'$, each SNPR$^\pm$ contributes to the cut sizes of both trees, whereas each SNPR$^+$ and SNPR$^-$ only contributes to the cut size of one tree. Thus any approach for computing $d_\tc(\cN,\cN')$ that is based on cut sizes as defined in this paper (probably) needs to apply non-uniform weights to the different types of SNPR operations.  Ultimately, it would be interesting to investigate whether or not an approach exists for computing  $d_\tc(\cN,\cN')$ that allows for any combination of weights.

\subsection*{Acknowledgements}

We thank the two anonymous referees for their careful reading and constructive comments. All authors thank the New Zealand Marsden Fund for their financial support.~Part of this paper is based upon work supported by the National Science Foundation under Grant No. DMS-1929284 while the second and third authors were in residence at the Institute for Computational and Experimental Research in Mathematics in Providence, Rhode Island, US, during the {\it Theory, Methods, and Applications of Quantitative Phylogenomics} program.



\begin{thebibliography}{99}

\bibitem{allen01}
B. L. Allen and M. Steel (2001). Subtree transfer operations and their induced metrics on evolutionary trees. {\it Annals of Combinatorics}, 5:1--15.

\bibitem{phd}
C. Allen-Savietta (2020). {\it Estimating phylogenetic networks from concatenated sequence alignments.} PhD thesis, University of Wisconsin-Madison.

\bibitem{relaxed}
V. Ard\'evol Mart\'inez, S. Chaplick, S. Kelk, R. Meuwese, M. Mihal\'ak, and G. Stamoulis. Relaxed agreement forests. In:\ H. Fernau, S. Gaspers, and R. Klasing (Eds.), {\it SOFSEM 2024: Theory and Practice of Computer Science}, pp. 40--54, Springer.

\bibitem{atkins19}
R. Atkins and C. McDiarmid (2019). Extremal distances for subtree transfer operations in binary trees. {\it Annals of Combinatorics}, 23:1--26.

\bibitem{baroni05}
M. Baroni, S. Gr\"unewald, V. Moulton, and C. Semple (2005). Bounding the number of hybridisation events for a consistent evolutionary history. {\it Journal of Mathematical Biology},  51:171--182.

\bibitem{bordewich05}
M. Bordewich and C. Semple (2005). On the computational complexity of the rooted subtree prune and regraft distance. {\it Annals of Combinatorics}, 8:409--423.
  
\bibitem{bordewich17} 
M. Bordewich, S. Linz, and C. Semple (2017). Lost in space? Generalising subtree prune and regraft to spaces of phylogenetic networks. {\it Journal of Theoretical Biology}, 423:1--12.

\bibitem{cardona09}
G. Cardona, F. Rossell\'o, and G. Valiente (2009). Comparison of tree-child phylogenetic networks. {\it IEEE/ACM Transactions on Computational Biology and Bioinformatics}, 6:552--569.

\bibitem{chen15}
J. Chen, J-H. Fan, and S-H. Sze (2015). Parameterized and approximation algorithms for maximum agreement forest in multifurcating trees. {\it Theoretical Computer Science}, 562:496--512.

\bibitem{choy05}
C. Choy, J. Jansson, K. Sadakane, and W.-K. Sung (2005). Computing the maximum agreement of phylogenetic networks. {\it Theoretical Computer Science}, 335:93--107.

\bibitem{ding11}
Y. Ding, S. Gr\"unewald, P. J. Humphries (2011) On agreement forests. {\it Journal of Combinatorial Theory Series A}, 118:2059--2065.

\bibitem{doecker21}
J. D\"ocker, S. Linz, and C. Semple (2021). The display sets of binary normal and tree-child networks. {\it The Electronic Journal of Combinatorics}, 28, Paper 1.8.

\bibitem{erdos21}
P. L. Erd{\H{o}}s, A. Francis, and T. R. Mezei (2021). Rooted NNI moves and distance-$1$ tail moves on tree-based phylogenetic networks. {\it Discrete Applied Mathematics}, 294:205--213.

\bibitem{francis17}
A. Francis, K. T. Huber, V. Moulton, and T. Wu (2017). Bounds for phylogenetic network space metrics. {\it Journal of Mathematical Biology}, 76:1229--1248.

\bibitem{gambette17}
P. Gambette, L. van Iersel, M. Jones. M. Lafond, F. Pardi, and C. Scornavacca (2017). Rearrangement moves on rooted phylogenetic networks. {\it PLoS Computational Biology}, 13:e1005611.

\bibitem{hein96}
J. Hein, T. Jiang, L. Wang, and K. Zhang (1996). On the complexity of comparing evolutionary trees. {\it Discrete Applied Mathematics}, 71:153--169.

\bibitem{huber15}
K. T. Huber. S. Linz, V. Moulton, and T. Wu (2015). Spaces of phylogenetic networks from generalized nearest-neighbor interchange operations. {\it Journal of Mathematical Biology}, 72:699--725.

\bibitem{huber16}
K. T. Huber, V. Moulton, and T. Wu (2016). Transforming phylogenetic networks: Moving beyond tree space. {\it Journal of Theoretical Biology}, 404:30--39.

\bibitem{janssen21}
R. Janssen (2021). Heading in the right direction? Using head moves to traverse phylogenetic network space. {\it Journal of Graph Algorithms and Applications}, 25:263--310.

\bibitem{janssen24}
R. Janssen (2024). PhyloX: A Python package for complete phylogenetic network workflows. {\it Journal of Open Source Software}, 9:6427.

\bibitem{janssen18}
R. Janssen, M. Jones, P. L. Erd{\H{o}}s, L. van Iersel, and C. Scornavacca (2018). Exploring the tiers of rooted phylogenetic network space using tail moves. {\it Bulletin of Mathematical Biology}, 80:2177--2208.

\bibitem{janssen19}
R. Janssen and J. Klawitter (2019). Rearrangement operations on unrooted phylogenetic networks. {\it Theory and Applications of Graphs}, 22:1--31.

\bibitem{janssen-phd}
R. Janssen (2021). {\it Rearranging phylogenetic networks}. PhD thesis, Delft University of Technology.

\bibitem{jansson04}
J. Jansson and W.-K. Sung (2004). The maximum agreement of two nested phylogenetic networks. In:\ R. Fleischer and G. Trippen (Eds.), {\it 15th International Symposium on Algorithms and Computation}, Lecture Notes in Computer Science, Volume 3341, pp. 581--593.

\bibitem{kelk20}
S. Kelk and S. Linz (2020). New reduction rules for the tree bisection and reconnection distance. {\it Annals of Combinatorics}, 24:475--502.

\bibitem{kelk24}
S. Kelk, S. Linz, and R. Meuwese (2024). Deep kernelization for the tree bisection and reconnection (TBR) distance in phylogenetics, {\it Journal of Computer and System Sciences}, 142:103519.

\bibitem{klawitter18}
J. Klawitter (2018). The SNPR neighbourhood of tree-child networks. {\it Journal of Graph Algorithms and Applications}, 22:329--355.

\bibitem{klawitter19}
J. Klawitter (2019). The agreement distance of rooted phylogenetic networks. {\it Discrete Mathematics and Theoretical Computer Science}, 21:19.

\bibitem{klawitter20}
J. Klawitter (2020). The agreement distance of unrooted phylogenetic networks. {\it Discrete Mathematics and Theoretical Computer Science}, 22:1 \#22.

\bibitem{klawitter-phd}
J. Klawitter (2020). {\it Spaces of phylogenetic networks}. PhD thesis, University of Auckland.

\bibitem{klawitter19b}
J. Klawitter and S. Linz (2019). On the subnet prune and regraft distance. {\it The Electronic Journal of Combinatorics}, 23, Paper 2.3.

\bibitem{kong22}
S. Kong, J. C. Pons, L. Kubatko, and K. Wicke (2022). Classes of explicit phylogenetic networks and their biological and mathematical significance. {\it Journal of Mathematical Biology}, 84:47.

\bibitem{linz09}
S. Linz and C. Semple (2009). Hybridization in nonbinary trees. {\it IEEE/ACM Transactions on Computational Biology and Bioinformatics}, 6:30--45.

\bibitem{linz23}
S. Linz and K. Wicke (2023). Exploring spaces of semi-directed level-$1$ networks. {\it Journal of Mathematical Biology}, 87:70.

\bibitem{markin22}
A. Markin, S. Wagle, T. K. Anderson, and O. Eulenstein (2022). RF-Net $2$: Fast inference of virus reassortment and hybridization networks. {\it Bioinformatics}, 38: 2144--2152.

\bibitem{mueller22}
N. F. M\"uller, K. E. Kistler, and T. Bedford (2022). A Bayesian approach to infer recombination patterns in coronaviruses. {\it Nature Communication}, 13:4186.

\bibitem{mueller20}
N. F. M\"uller, U. Stolz, G. Dudas, T. Stadler, and T. G. Vaughan (2020). Bayesian inference of reassortment networks reveals fitness benefits of reassortment in human influenza viruses. {\it Proceedings of the National Academy of Sciences of the United States of America}, 117:17104--17111.

\bibitem{olver22}
N. Olver, F. Schalekamp, S. van der Ster, L. Stougie, and A. van Zuylen (2022). A duality based $2$-approximation algorithm for maximum agreement forest. {\it Mathematical Programming}, 198:811--853.

\bibitem{stjohn17}
K. St. John (2017). Review paper: The shape of phylogenetic treespace. {\it Systematic Biology}, 66:e83--e94.

\bibitem{semple03}
C. Semple and M. Steel (2003). {\it Phylogenetics}. Oxford University Press.

\bibitem{shi18}
F. Shi, J. Chen, Q. Feng, and J. Wang (2018). A parameterized algorithm for the maximum agreement forest problem on multiple rooted multifurcating trees. {\it Journal of Computer and System Sciences}, 97:28--44.

\bibitem{valiente-book}
G. Valiente (2009). {\it Combinatorial Pattern Matching Algorithms in Computational Biology Using Perl and R}. Chapman \& Hall.

\bibitem{vaniersel22}
L. van Iersel, R. Janssen, M. Jones, Y. Murakami, N. Zeh (2022). A practical fixed-parameter algorithm for constructing tree-child networks from multiple binary trees. {\it Algorithmica}, 84, 917--960.

\bibitem{vanwersch22}
R. van Wersch, S. Kelk, S. Linz, and G. Stamoulis (2022). Reflections on kernelizing and computing unrooted agreement forests. {\it Annals of Operations Research}, 309:425--451.

\bibitem{whidden13}
C. Whidden, R. G. Beiko, and N. Zeh (2013). Fixed-parameter algorithms for maximum agreement forests. {\it SIAM Journal on Computing}, 42:1431--1466.



\end{thebibliography}
\end{document}